\DeclareMathAlphabet{\mathpzc}{OT1}{pzc}{m}{it}
\newtheorem{theorem}{Theorem}[section]
\newtheorem{proposition}[theorem]{Proposition}
\newtheorem{lemma}[theorem]{Lemma}
\theoremstyle{definition}
\newtheorem{definition}[theorem]{Definition}
\theoremstyle{remark}
\newtheorem{remark}[theorem]{Remark}
\newcommand{\CO}{{\mathcal O}}
\newcommand{\CX}{{\mathcal X}}
\newcommand{\SA}{{\mathscr A}}
\newcommand{\SB}{{\mathscr B}}
\newcommand{\SF}{{\mathscr F}}
\newcommand{\SK}{{\mathscr K}}
\newcommand{\SR}{{\mathscr R}}
\newcommand{\SZ}{{\mathscr Z}}
\newcommand{\fp}{{{\mathfrak p}}}
\newcommand{\hE}{{\widehat E}}
\newcommand{\hF}{{\widehat F}}
\newcommand{\hM}{{\widehat M}}
\newcommand{\hf}{{\widehat f}}
\newcommand{\tM}{{\widetilde{M}}}
\newcommand{\tS}{{\widetilde{S}}}
\newcommand{\tf}{{\widetilde{f}}}
\newcommand{\DC}{{\mathbb C}}
\newcommand{\DZ}{{\mathbb Z}}
\newcommand{\DQ}{{\mathbb Q}}
\newcommand{\DF}{{\mathbb F}}
\newcommand{\End}{{\operatorname{End}}}
\newcommand{\Ext}{{\operatorname{Ext}}}
\newcommand{\Hom}{{\operatorname{Hom}}}
\newcommand{\rad}{{\operatorname{rad}}}
\newcommand{\im}{{\operatorname{im}\,}}
\newcommand{\rk}{{{\operatorname{rk}}}}
\newcommand{\ol}{\overline}
\newcommand{\id}{{\operatorname{id}}}
\newcommand{\Quot}{{\operatorname{Quot\,}}}
\newcommand{\fin}{{\text{fin}}}
\newcommand{\tilt}{{\text{tilt}}}
\newcommand{\comment}[1]{}
\newcommand{\lgl}{\langle}
\newcommand{\rgl}{\rangle}
\newcommand{\pre}{{\operatorname{pre}}}
\newcommand{\qchoose}[2]{\left[{#1\atop#2}\right]}
\begin{document}

\pagenumbering{arabic}
\title[]{Quantum tilting modules over local rings} \author[]{Peter Fiebig}
\begin{abstract} We show that tilting modules for quantum groups over local Noetherian domains of quantum characteristic 0 exist and that the indecomposable tilting modules are parametrized by their highest weight. For this we introduce a model category $\CX=\CX_\SA(R)$ associated with a Noetherian $\DZ[v,v^{-1}]$-domain $\SA$ and a root system $R$. We show that if  $\SA$ is of quantum characteristic $0$, the model category contains all $U_\SA$-modules that admit a Weyl filtration. If $\SA$ is in addition local, we study torsion phenomena in the model category. This leads to a construction of torsion free, or ``maximal'' objects in $\CX$. We show that these correspond to tilting modules for the quantum group associated with $\SA$ and $R$.

MSC classes (2020): 17B10, 17B37
\end{abstract}

\address{Department Mathematik, FAU Erlangen--N\"urnberg, Cauerstra\ss e 11, 91058 Erlangen}
\email{fiebig@math.fau.de}
\maketitle

\section{Introduction}
Let $U=U_\SA(R)$ be the quantum group associated with a (finite) root system $R$ and a $\DZ[v,v^{-1}]$-algebra $\SA$. We are interested in its category $\CO$ and specifically in the subcategory of $\CO$ that contains all objects that are finitely generated as $\SA$-modules. If $\SA$ is of quantum characteristic $0$, i.e.~if the quantum numbers $[n]$ do not vanish in $\SA$ for $n\ne 0$, then we can define for each dominant weight $\lambda$ the Weyl module $W(\lambda)$ in $\CO$.   By results of Andersen, Polo and Wen, $W(\lambda)$ is a free $\SA$-module of finite rank and its character is given by Weyl's character formula.

Recall that an object $T$ in $\CO$  is called a {\em tilting module} if $T$ and its contravariant dual $dT$ admit a Weyl filtration, i.e.~a finite filtration with subquotients being isomorphic to Weyl modules. In the case that $\SA=\SK$ is a field it is known that  tilting modules exist and that the indecomposable tilting modules are parametrized by their highest weight, which needs to be dominant (cf.~Appendix E in \cite{J}, where the modular case is treated). The existence of an indecomposable tilting module with highest (dominant) weight $\lambda$ is known in the case that $\SA$ is a principal ideal domain (cf.~Lemma E.19 in \cite{J}), and the uniqueness of these objects can be shown if $\SA$ is a complete discrete valuation ring (Proposition E.22 in \cite{J}). 

However, the case of $\SA=\SZ_{\fp}$, the localization of $\DZ[v,v^{-1}]$ at the kernel of the ring homomorphism $\DZ[v,v^{-1}]\to\DF_p$ that sends $v$ to $1$, is of particular importance, as $U_{\SZ_\fp}$ is a natural (quantum) deformation of the hyperalgebra of the connected, simply connected, split reductive algebraic group with root system $R$ over $\DF_p$.  In this case $\SA$ is a local ring of quantum characteristic $0$, but not a principal ideal domain, and the arguments in \cite{J} do not carry over. 
The main goal in this article is to fill this gap and prove that tilting modules exist over local rings of quantum characteristic $0$, and that the indecomposable tilting modules are parametrized (up to isomorphism) by their (dominant) highest weight. 

Our approach is very different from the approach of Jantzen, which is inspired by \cite{D}, which is in turn inspired by  \cite{R}. It involves a  category $\CX=\CX_\SA(R)$ that is defined as follows. Denote by $X$ the weight lattice of $R$, and fix a set $\Pi\subset R$ of simple roots. Then the category $\CX$ contains as objects $X$-graded $\SA$-modules $M=\bigoplus_{\lambda\in X}M_\lambda$ that are endowed with $\SA$-linear endomomorphisms $E_{\alpha,n}$ and $F_{\alpha,n}$ of degree $+n\alpha$ and $-n\alpha$, resp., for each  $\alpha\in\Pi$ and $n\in\DZ_{>0}$.  We state three rather simple axioms (X1, a boundedness condition on weights), (X2, a simple type $A_1$ commutation relation) and (X3, a replacement of the Serre relations) that ensure that $M$ carries a unique $U_\SA$-module structure such that the $X$-grading is the weight decomposition, and the $E_{\alpha,n}$ and $F_{\alpha,n}$ are the action maps of the divided powers of the Serre generators. This construction gives rise to a fully faithful  embedding of $\CX$ into the category $\CO$. This embedding is not an equivalence, but the image is big enough.  In the case that $\SA$ is of quantum characteristic $0$ we show that the objects in $\CX$ that are finitely generated over $\SA$ correspond bijectively to  the  objects in $\CO$ that admit a Weyl filtration. 

Working with the category $\CX$ has the advantage that one can construct objects locally, i.e.~weight space by weight space. Moreover, it provides a convenient framework to study torsion phenomena. In particular, to any object $M$ in $\CX$ and any weight $\mu$ we associate a certain triple  $M_{\{\mu\}}\subset M_{\lgl\mu\rgl}\subset  M_{\{\mu\},\max}$ of torsion free $\SA$-modules with  torsion quotients. We call an object $M$ in $\CX$ {\em minimal}, if $M_{\{\mu\}}=M_{\lgl\mu\rgl}$, and   {\em maximal}, if $M_{\lgl\mu\rgl}=M_{\{\mu\},\max}$. 
Then we show that minimal as well as  maximal objects exist that the indecomposables are in both cases parametrized by their highest weight, which can be any (not necessarily dominant) weight $\lambda$. So we obtain two families $S_{\min}(\lambda)$ and $S_{\max}(\lambda)$ in $\CX$ that we can consider, via the embedding above, as objects in $\CO$. In the case that $\lambda$ is dominant, the object $S_{\min}(\lambda)$ yields the Weyl module, and we show that the $S_{\max}(\lambda)$ are actually indecomposable tilting modules. This settles  the existence of tilting modules. The fact that two indecomposable tilting modules with the same highest weight are isomorphic is then a consequence of some properties of the maximal objects $S_{\max}(\lambda)$. 

{\bf Acknowledgement:} I would like to thank Henning Haahr Andersen, Jens Carsten Jantzen, and an anonymous referee for valuable comments on previous versions of this article.

\section{$X$-graded spaces with operators}\label{sec-Xgradop}
The main goal in this section is to define the category $\CX=\CX_\SA(R)$ for a finite root system $R$ and a unital Noetherian  domain $\SA$ that is a $\DZ[v,v^{-1}]$-algebra. 

\subsection{Quantum integers}\label{subsec-qint} Let $v$ be an indeterminate and set $\SZ:=\DZ[v,v^{-1}]$. For $n\in\DZ$ and $d>0$  we define the quantum integer 
$$
[n]_d:=\frac{v^{dn}-v^{-dn}}{v^d-v^{-d}}=\begin{cases}
0,&\text{ if $n=0$},\\
v^{d(n-1)}+v^{d(n-3)}+\dots+v^{d(-n+1)},&\text{ if $n> 0$}, \\
-v^{d(-n-1)}-v^{d(-n-3)}-\dots-v^{d(n+1)},&\text{ if $n<0$}.
\end{cases}
$$
The quantum factorials are given by   $[0]_d^!:=1$ and
$
[n]_d^!:=[1]_d\cdot[2]_d\cdots[n]_d
$ for $n\ge 1$. The quantum binomials are  $\qchoose{n}{0}_d:=1$ 
and 
$
\qchoose{n}{r}_d:=\frac{[n]_d\cdot[n-1]_d\cdots[n-r+1]_d}{[1]_d\cdot [2]_d\cdots [r]_d}$ for $n\in\DZ$ and $r\ge 1$.
Note that under the ring homomorphism $\SZ\to\DZ$ that sends $v$ to $1$, the quantum integer $[n]_d$ is sent to $n$ for all $n\in\DZ$, independently of $d$. Hence $[n]^!_d$ is sent to $n!$ and $\qchoose{n}{r}_d$ to $n\choose r$.

\subsection{Graded spaces with operators}\label{subsec-gradspac}
We fix a finite root system $ R$ in a real vector space $V$ and a basis $\Pi$ of $R$.  The coroot for $\alpha\in R$ is $\alpha^\vee\in V^\ast$, and the weight lattice is $X:=\{\lambda\in V\mid \lgl \lambda,\alpha^\vee\rgl\in\DZ\text{ for all $\alpha\in R$}\}$. We
denote by $\le$ the standard partial order on $X$, i.e.~$\mu\le\lambda$ if and only if $\lambda-\mu$ can be written as a sum of elements in $\Pi$.

\begin{definition} \begin{enumerate}
\item A subset $I$ of $X$ is called {\em closed} if $\mu\in I$ and $\mu\le\lambda$ imply $\lambda\in I$.
\item A subset $S$ of $X$ is called {\em  quasi-bounded} if for any $\mu\in X$ the set $\{\lambda\in S\mid \mu\le\lambda\}$ is finite.
\end{enumerate}
\end{definition}


 Now let  $\SA$ be a unital $\SZ$-algebra that is a Noetherian domain. We denote by $q\in\SA$ the image of $v$. Then $q$ is invertible in $\SA$. Conversely, giving a  $\SZ$-algebra structure on a unital ring $\SR$ is the same as specifying an invertible element $q$ of $\SR$. Let $I$ be a closed subset of $X$, and $M=\bigoplus_{\mu\in I}M_\mu$  an $I$-graded $\SA$-module. 
We say that $\mu$ is a {\em weight of $M$} if $M_\mu\ne\{0\}$. 
For any $\mu\in I$, $\alpha\in\Pi$, and $n>0$ we have $\mu+n\alpha\in I$. Let
\begin{align*}
F_{\mu,\alpha,n}&\colon M_{\mu+n\alpha}\to M_\mu,\\
E_{\mu,\alpha,n}&\colon M_{\mu}\to M_{\mu+n\alpha}
\end{align*}
be $\SA$-linear homomorphisms. 
It is convenient to set $E_{\mu,\alpha,0}=F_{\mu,\alpha,0}:=\id_{M_\mu}$.
In the following we often suppress the index ``$\mu$'' in the notation of the $E$- and $F$-maps if the source of the maps is clear from the context, but we sometimes also write $E_{\alpha,n}^M$ and $F_{\alpha,n}^M$ to specify the object $M$ on which these homomorphisms are defined.  

Now we list some conditions  on the above data. Denote by $A=(\lgl\alpha,\beta^\vee\rgl)_{\alpha,\beta\in\Pi}$ the Cartan matrix associated with the root system $R$. Then there exists a vector $d=(d_\alpha)_{\alpha\in\Pi}$ with entries in $\{1,2,3\}$ such that $(d_\alpha \lgl\alpha,\beta^\vee\rgl)_{\alpha,\beta\in\Pi}$ is symmetric and such that each irreducible component of $R$ contains some $\alpha$ with  $d_\alpha=1$. The first two conditions are as follows. 

\begin{enumerate}
\item[(X1)] The set of weights of $M$  is  quasi-bounded and each $M_\mu$ is finitely generated as an $\SA$-module. 
\item[(X2)] For all $\mu\in I$, $\alpha,\beta\in\Pi$, $m,n>0$, and $v\in M_{\mu+n\beta}$,
$$
E_{\alpha,m}F_{\beta,n}(v)=
\begin{cases}
F_{\beta,n}E_{\alpha,m}(v),&\text{ if $\alpha\ne\beta$,}\\
\sum_{r=0}^{ \min(m,n)} \qchoose{\lgl\mu,\alpha^\vee\rgl+m+n}{r}_{d_\alpha}F_{\alpha,n-r}E_{\alpha,m-r}(v),&\text{ if $\alpha=\beta$}.
\end{cases}
$$
\end{enumerate}
(The cautious reader may want to have a look at Equation (a2) in Section 6.5 of \cite{L90} to get an idea of where the second equation comes from.)

\subsection{Torsion subquotients}
In order to formulate the third condition, we need some definitions. Suppose that $M$ satisfies (X1). Then for any $\mu\in I$ we can define
$$
M_{\delta\mu}:=\bigoplus_{\alpha\in\Pi, n>0} M_{\mu+n\alpha}.
$$
Note that, since the set of weights is quasi-bounded, only finitely many of the direct summands are non-zero. 
Let
\begin{align*}
E_\mu&\colon M_\mu\to M_{\delta\mu},\\
F_\mu&\colon M_{\delta\mu}\to M_\mu
\end{align*}
be the column and the row vector with entries $E_{\mu,\alpha,n}$ and $F_{\mu,\alpha,n}$, resp. We sometimes write $E_\mu^M$ and $F_\mu^M$ in order to  specify the object $M$ on which $E_\mu$ and $F_\mu$ act. Set
\begin{align*}
M_{\{\mu\}}&:=E_\mu(\im F_\mu),\\
M_{\lgl\mu\rgl}&:=E_\mu(M_\mu).
\end{align*}
So we have inclusions $M_{\{\mu\}}\subset M_{\lgl\mu\rgl}\subset M_{\delta\mu}$.

\begin{enumerate}
\item[(X3)] For all $\mu\in I$ the following holds:
\begin{enumerate}
\item The restriction of $E_\mu\colon M_\mu\to M_{\delta\mu}$ to $\im F_\mu\subset M_{\mu}$ is injective and hence induces an isomorphism $\im F_\mu\xrightarrow{\sim} M_{\{\mu\}}$.
\item  The quotient $M_{\lgl\mu\rgl}/M_{\{\mu\}}$ is a torsion $\SA$-module. 
\item The quotient $M_\mu/\im F_\mu$ is a free $\SA$-module.
\end{enumerate} 
\end{enumerate}

Here is our first, rather easy, result.

\begin{lemma} \label{lemma-torfree} Suppose that our data satisfies (X1) and (X3). Then $M$ is a torsion free $\SA$-module. In particular, the spaces $M_{\delta\mu}$, $M_{\{\mu\}}$ and $M_{\lgl\mu\rgl}$  are torsion free $\SA$-modules for all $\mu\in I$.
\end{lemma}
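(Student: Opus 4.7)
The plan is to prove by induction on the integer $N(\mu):=|\{\lambda\in I\mid \lambda>\mu,\ M_\lambda\ne 0\}|$ that each weight space $M_\mu$ is a torsion free $\SA$-module. By axiom (X1) this integer is finite, and if $\lambda>\mu$ is a weight of $M$ then $\{\nu>\lambda, M_\nu\ne 0\}$ is a proper subset of $\{\nu>\mu, M_\nu\ne 0\}$ (it omits $\lambda$ itself), so $N(\lambda)<N(\mu)$ and the induction is well-founded.

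For the base case $N(\mu)=0$ the weight $\mu$ is maximal among the weights of $M$, so $M_{\mu+n\alpha}=0$ for all $\alpha\in\Pi$ and $n>0$. Hence $M_{\delta\mu}=0$ and $\im F_\mu=0$, so by (X3)(c) the module $M_\mu=M_\mu/\im F_\mu$ is free, in particular torsion free.

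For the inductive step, assume $M_\lambda$ is torsion free for every weight $\lambda>\mu$. Then
$$M_{\delta\mu}=\bigoplus_{\alpha\in\Pi,\ n>0}M_{\mu+n\alpha}$$
is torsion free as a direct sum of torsion free modules. By (X3)(a), $E_\mu$ restricts to an isomorphism $\im F_\mu\xrightarrow{\sim} M_{\{\mu\}}$, and $M_{\{\mu\}}\subset M_{\delta\mu}$ is a submodule of a torsion free module; thus $\im F_\mu$ is torsion free. By (X3)(c), the quotient $M_\mu/\im F_\mu$ is \emph{free}, so the short exact sequence
$$0\to\im F_\mu\to M_\mu\to M_\mu/\im F_\mu\to 0$$
splits and exhibits $M_\mu$ as a direct sum of two torsion free modules, hence torsion free.

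The \emph{in particular} clause is then immediate: $M_{\delta\mu}$ is a direct sum of the torsion free spaces $M_{\mu+n\alpha}$, and the inclusions $M_{\{\mu\}}\subset M_{\lgl\mu\rgl}\subset M_{\delta\mu}$ realize the first two as submodules of a torsion free module. I do not anticipate a substantive obstacle in this argument; the only non-trivial point is noticing that (X3)(c) asserts \emph{freeness} rather than mere torsion freeness of $M_\mu/\im F_\mu$, which is precisely what is needed for the short exact sequence to split and for the induction to propagate.
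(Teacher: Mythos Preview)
Your proof is correct and follows essentially the same approach as the paper's: both exploit the well-foundedness guaranteed by (X1) to argue that $M_{\delta\mu}$ is torsion free (by considering weights strictly above $\mu$), then use (X3a) to deduce that $\im F_\mu$ is torsion free and (X3c) to conclude that $M_\mu$ itself is torsion free. The paper phrases this as a proof by contradiction (picking a maximal weight $\mu$ with $M_\mu$ not torsion free), whereas you phrase it as a direct induction on $N(\mu)$; also, your appeal to the splitting of the short exact sequence is slightly more than needed, since an extension of a torsion free module by a torsion free module is already torsion free.
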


\begin{proof} If $M$ is not torsion free, then assumption (X1) implies that there is a maximal weight $\mu$ of $M$ such that $M_\mu$ is not torsion free. By the maximality of $\mu$,  the module $M_{\delta\mu}$ is torsion free. Hence so is its submodule $M_{\{\mu\}}$. From (X3a) it follows that $\im F_\mu$ is torsion free. By (X3c) the module  $M_\mu/\im F_\mu$ is free,  so $M_\mu$ must be torsion free and we have a contradiction.  
\end{proof}
The following results might shed some light on the assumption (X3). 

\begin{lemma} \label{lemma-minimal} Suppose that the assumption (X3a) holds and let $\mu$ be an element in $I$. Then  the following are equivalent.
\begin{enumerate}
\item $M_\mu=\ker E_\mu\oplus\im F_\mu$.
\item $M_{\{\mu\}}=M_{\lgl\mu\rgl}$.  
\end{enumerate}
\end{lemma}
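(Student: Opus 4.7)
The plan is to unwind the definitions: $M_{\{\mu\}} = E_\mu(\im F_\mu)$ and $M_{\langle\mu\rangle} = E_\mu(M_\mu)$, and to exploit (X3a), which says precisely that the restriction of $E_\mu$ to $\im F_\mu$ is injective. In particular, $\ker E_\mu \cap \im F_\mu = \{0\}$ for free, so the sum $\ker E_\mu + \im F_\mu$ is always direct; the real content is to determine when this sum equals $M_\mu$.

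For the implication (1) $\Rightarrow$ (2), I would simply compute
\[
M_{\langle\mu\rangle} = E_\mu(M_\mu) = E_\mu(\ker E_\mu) + E_\mu(\im F_\mu) = E_\mu(\im F_\mu) = M_{\{\mu\}}.
\]

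For (2) $\Rightarrow$ (1), I would start with the equality $E_\mu(M_\mu) = E_\mu(\im F_\mu)$. Given an arbitrary $x \in M_\mu$, the element $E_\mu(x)$ lies in $E_\mu(\im F_\mu)$, so there is some $y \in \im F_\mu$ with $E_\mu(x) = E_\mu(y)$. Then $x-y \in \ker E_\mu$ and $x = (x-y) + y$, which shows $M_\mu = \ker E_\mu + \im F_\mu$. Combined with the observation above that (X3a) forces this sum to be direct, we obtain (1).

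There is no significant obstacle here; the statement is essentially a formal consequence of (X3a). The only thing to keep track of is the sequencing, namely that the intersection $\ker E_\mu \cap \im F_\mu = \{0\}$ is available from (X3a) alone and does not require (2).
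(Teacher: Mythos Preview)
Your proof is correct and matches the paper's own argument essentially line for line: both directions are handled by the same computations, and (X3a) is invoked in exactly the same way to obtain directness of the sum in the implication (2) $\Rightarrow$ (1).
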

\begin{proof} If (1) holds, then $M_{\lgl\mu\rgl}=E_\mu(M_\mu)=E_\mu(\im F_\mu)=M_{\{\mu\}}$. Suppose that (2) holds, so $E_\mu(M_\mu)=E_\mu(\im F_\mu)$. For each $m\in M_\mu$ there exists then an element $\tilde m\in \im F_\mu$ such that $E_\mu(m)=E_\mu(\tilde m)$, hence $m-\tilde m\in\ker E_\mu$. So $M_\mu=\ker E_\mu+\im F_\mu$. But condition (X3a) reads $\ker E_\mu\cap\im F_\mu=\{0\}$. Hence $M_\mu=\ker E_\mu\oplus\im F_\mu$. 
\end{proof}

Denote by $\SK$ the quotient field of $\SA$. For an $\SA$-module $N$  let $N_\SK:=N\otimes_\SA\SK$ be the associated $\SK$-module.

\begin{lemma}\label{lemma-X3} Suppose that $M=\bigoplus_{\mu\in I} M_\mu$ satisfies condition (X1). Then condition (X3) is equivalent to the following set of conditions.
\begin{enumerate}
\item $M$ is a torsion free $\SA$-module.
\item  For all $\mu \in I$ we have $(M_\mu)_\SK=(\ker E_\mu)_\SK\oplus (\im F_\mu)_\SK$.
\item  Condition (X3c) holds: $M_\mu/\im F_\mu$ is a free $\SA$-module for all $\mu\in I$.
\end{enumerate}
In particular, if $\SA=\SK$ is a field, then the conditions (X3abc) simplify to $M_\mu=\ker E_\mu\oplus \im F_\mu$ for all $\mu\in I$. 
\end{lemma}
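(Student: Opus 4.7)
The plan is to prove both implications using the flatness of the localization $\SA\hookrightarrow\SK$, together with two elementary observations: (i) any torsion free $\SA$-module $N$ injects into $N_\SK$, and (ii) an $\SA$-module $N$ is torsion if and only if $N_\SK=0$. Flatness moreover guarantees that $(-)_\SK$ commutes with forming kernels, images, and finite intersections of submodules of a common ambient module.

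For the forward direction, assume (X3). Then (1) is Lemma~\ref{lemma-torfree}, and (3) is (X3c) verbatim. For (2), condition (X3a) asserts $\ker E_\mu\cap\im F_\mu=\{0\}$ inside $M_\mu$, which upon tensoring with $\SK$ gives $(\ker E_\mu)_\SK\cap(\im F_\mu)_\SK=\{0\}$; so the sum inside $(M_\mu)_\SK$ is already direct. To see this sum exhausts $(M_\mu)_\SK$, apply (ii) to (X3b): the torsion quotient $M_{\lgl\mu\rgl}/M_{\{\mu\}}$ forces $(M_{\lgl\mu\rgl})_\SK=(M_{\{\mu\}})_\SK$, i.e.\ $E_\mu$ has the same image on $(M_\mu)_\SK$ and on $(\im F_\mu)_\SK$. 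Thus every $m\in (M_\mu)_\SK$ admits some $m'\in(\im F_\mu)_\SK$ with $E_\mu(m)=E_\mu(m')$, and hence $m=(m-m')+m'\in(\ker E_\mu)_\SK+(\im F_\mu)_\SK$.

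Conversely, assume (1)--(3). Then (X3c) is (3). For (X3a), set $N:=\ker E_\mu\cap\im F_\mu\subset M_\mu$; since $M_\mu$ is torsion free by (1), observation (i) yields an injection $N\hookrightarrow N_\SK=(\ker E_\mu)_\SK\cap(\im F_\mu)_\SK=\{0\}$, the last equality being part of (2). Hence $N=\{0\}$. For (X3b), the decomposition in (2) forces $E_\mu\bigl((M_\mu)_\SK\bigr)=E_\mu\bigl((\im F_\mu)_\SK\bigr)$, i.e.\ $(M_{\lgl\mu\rgl})_\SK=(M_{\{\mu\}})_\SK$, so the quotient $M_{\lgl\mu\rgl}/M_{\{\mu\}}$ tensored with $\SK$ vanishes, and (ii) identifies it as torsion.

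Finally, when $\SA=\SK$ is itself a field, conditions (1) and (3) are automatic and $(-)_\SK$ is the identity functor, so the reformulation collapses to the single equation $M_\mu=\ker E_\mu\oplus\im F_\mu$, as claimed. The whole argument is essentially bookkeeping; the one point to watch is the clean splitting of (X3a) and (X3b) as the ``trivial intersection'' and ``total span'' halves of the $\SK$-linear direct sum in (2), translated respectively through (i) and (ii).
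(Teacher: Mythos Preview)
Your proof is correct and follows essentially the same approach as the paper's: both directions hinge on Lemma~\ref{lemma-torfree} for (1), on reading (X3a) as $\ker E_\mu\cap\im F_\mu=\{0\}$ and (X3b) as $(M_{\{\mu\}})_\SK=(M_{\lgl\mu\rgl})_\SK$, and on passing back and forth via the flat localization $\SA\hookrightarrow\SK$. Your write-up is slightly more explicit than the paper's about the role of flatness in commuting $(-)_\SK$ with kernels, images, and intersections, but the underlying argument is the same.
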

\begin{proof} Suppose that (X3) is satisfied. We have already shown in Lemma \ref{lemma-torfree} that (X1) and (X3) imply that $M$ is torsion free as an $\SA$-module.  Moreover, (X3a) says that $\ker E_\mu\cap \im F_\mu=\{0\}$. Now let $m\in M_\mu$. Then, by (X3b), there exists an element $\xi\in\SA$, $\xi\ne 0$ and $m^\prime\in \im F_\mu$ such that $\xi E_\mu(m)=E_\mu(m^\prime)$. So $\xi m-m^\prime$ is contained in the kernel of $E_\mu$ and we deduce $(M_\mu)_\SK=(\ker E_\mu)_\SK+(\im F_\mu)_\SK$. The last two results say that $(M_\mu)_\SK=(\ker E_\mu)_\SK\oplus (\im F_\mu)_\SK$. Hence (1), (2) and (3) are satisfied. 

Now assume that (1), (2) and (3) hold. As $M$ is torsion free we can view it as a subspace in $M_\SK$. Hence $(M_\mu)_\SK=(\ker E_\mu)_\SK\oplus (\im F_\mu)_\SK$ implies that $E_\mu|_{\im F_\mu}$ is injective, i.e.~(X3a). It also implies that 
$E_\mu(\im F_\mu)_\SK=E_\mu(M_\mu)_\SK$, i.e.~$(M_{\{\mu\}})_\SK=(M_{\lgl\mu\rgl})_\SK$. Hence  the cokernel of the inclusion $M_{\{\mu\}}\subset M_{\lgl\mu\rgl}$  is a torsion module, so (X3b) holds. That (X3c) holds is the assumption (3). 
\end{proof}

\subsection{The category $\CX$} 
Let $I$ be a closed subset of $X$.
\begin{definition} The category $\CX_{I,\SA}$ is defined as follows. Objects are $I$-graded $\SA$-modules $M=\bigoplus_{\mu\in I}M_\mu$ endowed with $\SA$-linear homomorphisms $F_{\mu,\alpha,n}\colon M_{\mu+n\alpha}\to M_\mu$ and $E_{\mu,\alpha,n}\colon M_\mu\to M_{\mu+n\alpha}$ for all $\mu\in I$, $\alpha\in\Pi$ and $n>0$, such that conditions (X1), (X2) and (X3) are satisfied. A morphism $f\colon M\to N$  in $\CX_{I,\SA}$ is a collection of $\SA$-linear homomorphisms $f_\mu\colon M_\mu\to N_\mu$ for all $\mu\in I$ such that the diagrams

\centerline{
\xymatrix{
M_{\mu+n\alpha}\ar[r]^{f_{\mu+n\alpha}}\ar[d]_{F^M_{\alpha,n}}&N_{\mu+n\alpha}\ar[d]^{F^N_{\alpha,n}}\\
M_\mu\ar[r]^{f_\mu}&N_\mu
}
\quad\quad
\xymatrix{
M_{\mu+n\alpha}\ar[r]^{f_{\mu+n\alpha}}&N_{\mu+n\alpha}\\
M_{\mu}\ar[u]^{E^M_{\alpha,n}}\ar[r]^{f_{\mu}}&N_{\mu}\ar[u]_{E^N_{\alpha,n}}
}
}
\noindent
commute for all $\mu\in I$, $\alpha\in\Pi$ and $n>0$.
\end{definition}
If the ground ring is determined from the context, we write $\CX_I$ instead of $\CX_{I,\SA}$. We also write $\CX$ or $\CX_\SA$ for the ``global''  category $\CX_{X,\SA}$. 
 
If $M$ and $N$ are objects in $\CX_I$ and $f=\{f_\mu\colon M_\mu\to N_\mu\}_{\mu\in I}$ is a collection of homomorphisms, then we denote  by $f_{\delta\mu}\colon M_{\delta\mu}\to N_{\delta\mu}$ the diagonal matrix with entries $f_{\mu+n\alpha}$. Then $f$ is a morphism in $\CX_I$ from $M$ to $N$ if and only if for all $\mu\in I$ the diagrams

\centerline{
\xymatrix{
M_{\delta\mu}\ar[d]_{F^M_\mu}\ar[r]^{f_{\delta\mu}}&N_{\delta\mu}\ar[d]^{F^N_\mu}\\
M_\mu\ar[r]^{f_\mu}& N_\mu
}
\quad\quad
\xymatrix{
M_{\delta\mu}\ar[r]^{f_{\delta\mu}}&N_{\delta\mu}\\
M_\mu\ar[u]^{E^M_\mu}\ar[r]^{f_\mu}& N_\mu\ar[u]_{E^N_\mu}
}
}
\noindent commute.

\subsection{Base change}\label{sec-basech} We now want to understand whether the conditions that define the category $\CX$ are stable under base change. 
So let $\SA\to\SB$  be a homomorphism of unital $\SZ$-algebras that are Noetherian  domains. Let $M$ be an object in $\CX_{I,\SA}$. We define $M_{\SB}=\bigoplus_{\mu\in I} M_{\SB\mu}$ by setting (as before) $M_{\SB\mu}:=M_\mu\otimes_\SA\SB$. For $\mu\in X$, $\alpha\in\Pi$ and $n>0$ we have induced homomorphisms $E^{M_{\SB}}_{\mu,\alpha,n}=E_{\mu,\alpha,n}\otimes\id_{\SB}\colon M_{\SB\mu}\to M_{\SB\mu+n\alpha}$ and $F^{M_{\SB}}_{\mu,\alpha,n}=F_{\mu,\alpha,n}\otimes\id_{\SB}\colon M_{\SB\mu+n\alpha}\to M_{\SB\mu}$.

\begin{lemma} \label{lemma-basechange} Suppose that $\SA\to\SB$ is a flat homomorphism. Then the  object $M_\SB$ is contained in $\CX_{I,\SB}$.
\end{lemma}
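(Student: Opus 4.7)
The plan is to verify conditions (X1), (X2), and (X3) in turn for the base-changed object $M_\SB$, exploiting throughout that flat base change commutes with formation of kernels, images, and short exact sequences. Most of the verification reduces to standard facts about flatness; the one place a genuinely new input is needed is in the torsion statement (X3b), where one has to know that the ring map $\SA\to\SB$ is injective.

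For (X1), the weights of $M_\SB$ form a subset of the weights of $M$, hence a quasi-bounded subset of $X$, and each $M_{\SB\mu}=M_\mu\otimes_\SA\SB$ is finitely generated over $\SB$ because $M_\mu$ is finitely generated over $\SA$. For (X2), I would verify the identity on pure tensors $v\otimes b$ with $v\in M_{\mu+n\beta}$, reducing it to the identity in $M$; the quantum binomials $\qchoose{\lgl\mu,\alpha^\vee\rgl+m+n}{r}_{d_\alpha}$ live in the image of the canonical map $\SZ\to\SA\to\SB$, so tensoring preserves them. Both checks are formal.

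The substantive content is in (X3). By flatness of $\SA\to\SB$, for every $\mu\in I$ the submodules $\im F_\mu^M\subset M_\mu$ and $M_{\{\mu\}}\subset M_{\lgl\mu\rgl}\subset M_{\delta\mu}$ tensor with $\SB$ to subspaces that can be canonically identified with $\im F_\mu^{M_\SB}$, $M_{\SB\{\mu\}}$ and $M_{\SB\lgl\mu\rgl}$ inside $M_{\SB\delta\mu}$. Part (X3a) for $M_\SB$ then follows by tensoring the injective map $E_\mu^M|_{\im F_\mu^M}\colon\im F_\mu^M\hookrightarrow M_{\delta\mu}$ with the flat module $\SB$. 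Part (X3c) follows from applying $-\otimes_\SA\SB$ to the short exact sequence $0\to\im F_\mu^M\to M_\mu\to M_\mu/\im F_\mu^M\to 0$, whose last term is free over $\SA$ and hence tensors to a free $\SB$-module.

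The main obstacle is (X3b), since $\SA$-torsion does not automatically pass to $\SB$-torsion: one must rule out that a nonzero annihilator in $\SA$ maps to $0$ in $\SB$. To circumvent this I would first observe that $\SA\to\SB$ is injective. Indeed, $\SB$ is a nonzero flat module over the domain $\SA$, hence torsion-free over $\SA$; if some nonzero $a\in\SA$ mapped to $0$ in $\SB$, then $1\in\SB$ would be $\SA$-torsion, contradicting $1\ne 0$ in the domain $\SB$. Now $M_{\lgl\mu\rgl}^M/M_{\{\mu\}}^M$ is finitely generated over the Noetherian domain $\SA$ (being a subquotient of the finitely generated module $M_{\delta\mu}$) and $\SA$-torsion by (X3b) for $M$, so a single nonzero $a\in\SA$ annihilates it. Its image in $\SB$ is nonzero by the injectivity just established, and annihilates $M_{\SB\lgl\mu\rgl}/M_{\SB\{\mu\}}\cong(M_{\lgl\mu\rgl}^M/M_{\{\mu\}}^M)\otimes_\SA\SB$, giving (X3b) and completing the proof.
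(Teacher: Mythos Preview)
Your proof is correct and follows essentially the same approach as the paper's: verify (X1) and (X2) formally, then use flatness to identify $\im F_\mu^{M_\SB}$, $M_{\SB\{\mu\}}$, $M_{\SB\lgl\mu\rgl}$ with the base changes of their $\SA$-counterparts and deduce (X3a)--(X3c). You are in fact more careful than the paper on (X3b), where the paper simply asserts that torsion is preserved while you supply the reason---namely, that a nonzero flat $\SA$-algebra over a domain receives $\SA$ injectively, so a nonzero annihilator survives.
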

\begin{proof} It is clear that the properties (X1) and (X2) are stable under arbitrary base change. Moreover,  $M_{\SB\lgl\mu\rgl}$, $M_{\SB\{\mu\}}$ and  $\im F_{\mu}^{M_\SB}$  are obtained from $M_{\lgl\mu\rgl}$, $M_{\{\mu\}}$ and $\im F_\mu^M$ by flat base change for all $\mu\in I$ by right exactness of base change. Hence (X3b) and (X3c) also hold for $M_\SB$. Again by the flatness condition, the homomorphism $E_\mu|_{\im F_\mu}$ remains injective after base change. Hence property  (X3a) also holds.  \end{proof}

\section{Extending morphisms}\label{sec-extmor}
We retain the notations of the previous section. 
Let $I^\prime\subset I$ be closed subsets of $X$ and let $M$ be an object in $\CX_I$. We define $M_{I^\prime}:=\bigoplus_{\mu\in I^\prime} M_\mu$ and endow it with the homomorphisms $E_{\mu,\alpha,n}$ and $F_{\mu,\alpha,n}$ for all $\mu\in I^\prime$. Then one easily checks that the properties (X1), (X2) and (X3) are preserved, so this defines an object $M_{I^\prime}$  in $\CX_{I^\prime}$. For a morphism $f\colon M\to N$ in $\CX_I$ we obtain a morphism $f_{I^\prime}\colon M_{I^\prime}\to N_{I^\prime}$ by restriction, and this yields a functor
$$
(\cdot)_{I^\prime}\colon \CX_I\to \CX_{I^\prime}
$$
that we call the {\em restriction functor}.

\subsection{Extensions of morphisms}

The following proposition is a cornerstone of the approach outlined in this article. Its proof is not difficult, but lengthy.  

\begin{proposition} \label{prop-mainext}  Let $I^\prime$ be a closed subset of $X$ and suppose that $\mu\not\in I^\prime$ is such that $I:=I^\prime\cup\{\mu\}$ is also closed. Let $M$ and $N$ be objects in $\CX_I$, and let $f^\prime\colon M_{I^\prime}\to N_{I^\prime}$ be a morphism in $\CX_{I^\prime}$.
\begin{enumerate}
\item There exists a unique $\SA$-linear homomorphism $\tilde f_\mu\colon \im F^M_\mu\to N_\mu$ such that the diagrams 

\centerline{
\xymatrix{
M_{\delta\mu}\ar[d]_{F_\mu^M}\ar[r]^{f^\prime_{\delta\mu}}&N_{\delta\mu}\ar[d]^{F_\mu^N}\\
\im F_\mu^M\ar[r]^{\tf_\mu}& N_\mu
}
\quad\quad
\xymatrix{
M_{\delta\mu}\ar[r]^{f^\prime_{\delta\mu}}&N_{\delta\mu}\\
\im F_\mu^M\ar[u]^{E_\mu^M|_{\im F_\mu^M}}\ar[r]^{\tf_\mu}& N_\mu\ar[u]_{E_\mu^N}
}
}
\noindent commute. In particular, $f^\prime_{\delta\mu}$ maps $M_{\{\mu\}}$ into $N_{\{\mu\}}$.
\item The following are equivalent.
\begin{enumerate}
\item  There exists a morphism $f\colon M\to N$ in $\CX_I$ such that $f_{I^\prime}=f^\prime$.
\item The homomorphism $f^\prime_{\delta\mu}\colon M_{\delta\mu}\to N_{\delta\mu}$ maps $M_{\lgl\mu\rgl}$ into $N_{\lgl\mu\rgl}$.
\end{enumerate}
\end{enumerate}
\end{proposition}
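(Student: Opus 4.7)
My plan is to treat (1) and (2) in sequence, with the key technical ingredient being a commutation identity that packages the use of (X2) and (X3a). For part (1), the preliminary step I would establish is that
\[f'_{\delta\mu}\circ E^M_\mu\circ F^M_\mu \;=\; E^N_\mu\circ F^N_\mu\circ f'_{\delta\mu}\]
as maps $M_{\delta\mu}\to N_{\delta\mu}$. The argument is that by (X2), each matrix entry $E^M_{\beta,m}\circ F^M_{\alpha,n}$ of the left-hand composition, evaluated on $M_{\mu+n\alpha}$, can be rewritten as a sum of products of $E$- and $F$-maps whose source, target, and intermediate weight all lie strictly above $\mu$, hence entirely within $I'$ (the intermediate weight $\mu+n\alpha+m\beta$ for $\alpha\ne\beta$, and $\mu+(n+m-r)\alpha$ with $r\le\min(m,n)$ for $\alpha=\beta$). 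Since $f'$ is a morphism in $\CX_{I'}$, it commutes with each such product, and the identity follows by summation. In particular, $f'_{\delta\mu}(M_{\{\mu\}}) = f'_{\delta\mu}(E^M_\mu(\im F^M_\mu)) \subset E^N_\mu(\im F^N_\mu) = N_{\{\mu\}}$, which is the auxiliary claim in (1).

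With this identity in hand, and using (X3a) for $N$ to identify $E^N_\mu|_{\im F^N_\mu}\colon \im F^N_\mu\xrightarrow{\sim} N_{\{\mu\}}$, I would define
\[\tilde f_\mu \;:=\; (E^N_\mu|_{\im F^N_\mu})^{-1}\circ f'_{\delta\mu}\circ E^M_\mu|_{\im F^M_\mu}\colon \im F^M_\mu\to \im F^N_\mu\subset N_\mu.\]
The second diagram in (1) commutes by construction. Both sides of the first diagram land in $\im F^N_\mu$, and after post-composing with the injection $E^N_\mu|_{\im F^N_\mu}$ they coincide by the displayed identity, so the first diagram commutes as well. Uniqueness of $\tilde f_\mu$ follows from the first diagram alone, since $F^M_\mu$ surjects onto $\im F^M_\mu$.

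For part (2), the implication (a)$\Rightarrow$(b) is immediate from the $E$-square at weight $\mu$: $f'_{\delta\mu}(M_{\lgl\mu\rgl}) = E^N_\mu(f_\mu(M_\mu)) \subset N_{\lgl\mu\rgl}$. For (b)$\Rightarrow$(a), I would use (X3c) to split $M_\mu = \im F^M_\mu \oplus C$ with $C$ a free $\SA$-module, set $f_\mu := \tilde f_\mu$ on $\im F^M_\mu$, and on a basis $\{c_i\}$ of $C$ pick preimages $n_i \in N_\mu$ of the elements $f'_{\delta\mu}(E^M_\mu(c_i))$ under the surjection $E^N_\mu\colon N_\mu\twoheadrightarrow N_{\lgl\mu\rgl}$, which exist thanks to hypothesis (b); then extend $\SA$-linearly. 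Setting $f_\nu := f'_\nu$ for $\nu\in I'$, the resulting $f$ is a morphism in $\CX_I$: squares entirely within $I'$ are inherited from $f'$, the $F$- and $E$-squares at weight $\mu$ hold by part (1) and by the construction of $f_\mu$ respectively, and closedness of $I'$ rules out any $\nu\in I'$ with $\nu+n\alpha=\mu$, so no further compatibilities arise. The technical heart of the argument is the commutation identity in part (1), whose verification hinges on the precise form of (X2) together with (X3a); the rest of the proof is linear algebra, with (X3c) providing just enough splitting to extend $\tilde f_\mu$ from $\im F^M_\mu$ to all of $M_\mu$.
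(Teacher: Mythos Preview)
Your proof is correct and follows essentially the same approach as the paper's: both arguments hinge on using (X2) to rewrite each entry $E_{\beta,m}F_{\alpha,n}$ of $E_\mu\circ F_\mu$ in terms of maps living entirely in $I'$, then invoking (X3a) and (X3c) exactly as you do. The only difference is presentational: the paper introduces an auxiliary space $\widehat M_\mu$ with maps $\widehat E_\mu,\widehat F_\mu$ to encode the right-hand side of (X2) explicitly, whereas you state the resulting commutation identity $f'_{\delta\mu}\circ E^M_\mu\circ F^M_\mu=E^N_\mu\circ F^N_\mu\circ f'_{\delta\mu}$ directly and define $\tilde f_\mu$ via the inverse of $E^N_\mu|_{\im F^N_\mu}$---this is the same content, just packaged more compactly.
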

\begin{proof} First we prove part (1). 
Set $\hM_\mu:=\bigoplus_{\beta\in\Pi,n>0} M_{\mu+n\beta}$ and denote by $\hF_{\beta,n}\colon M_{\mu+n\beta}\to \hM_\mu$ the embedding of the corresponding direct summand. Define $\hF_\mu\colon M_{\delta\mu}\to \hM_\mu$ as the row vector with entries $\hF_{\beta,n}$\footnote{The author is aware of the fact that this looks rather silly. There is a tautological identification $M_{\delta\mu}=\widehat M_\mu$ that identifies $\hF_\mu$ with the identity. However, $M_{\delta\mu}$ and $\widehat M_\mu$ will play very different roles in the following.}. For $\alpha\in\Pi$, $m>0$  define an $\SA$-linear map $\widehat E_{\alpha,m}\colon \hM_\mu\to M_{\mu+m\alpha}$ by additive extension of the following formulas. For $\beta\in\Pi$, $n>0$ and $v\in M_{\mu+n\beta}$ set
$$
\widehat E_{\alpha,m}\widehat F_{\beta,n}(v):=
\begin{cases}
F_{\beta,n}E_{\alpha,m}(v),&\text{ if $\alpha\ne\beta$},\\
\sum_{0\le r\le \min(m,n)} \qchoose{\lgl\mu,\alpha^\vee\rgl+n+m}{ r}_{d_\alpha} F_{\alpha,n-r}E_{\alpha,m-r}(v), &\text{ if $\alpha=\beta$}.
\end{cases}
$$
Let $\hE_\mu\colon \hM_\mu\to M_{\delta\mu}$ be the column vector with entries $\hE_{\alpha,m}$. 

Now define $\phi\colon \hM_\mu\to M_\mu$ as the row vector with entries $F_{\beta,n}$.  Obviously, the diagram

\centerline{
\xymatrix{
&M_{\delta\mu}\ar[dl]_{\hF_\mu}\ar[dr]^{F_\mu}&\\
\hM_\mu\ar[rr]^\phi&& M_\mu
}
}
\noindent
commutes. As the $\hE_{\alpha,m}$- and $\hF_{\beta,n}$-maps satisfy the same commutation relations as the $E_{\alpha,m}$- and $F_{\beta,n}$-maps by (X2), and as $\hF_{\mu}$ is surjective, also the diagram

\centerline{
\xymatrix{
&M_{\delta\mu}&\\
\hM_\mu\ar[rr]^\phi\ar[ur]^{\hE_\mu}&& M_\mu\ar[ul]_{E_\mu}
}
}
\noindent
commutes. 
 As $\hF_\mu$ is surjective, we have $\im\phi=\im F_\mu$. As $E_\mu$ is injective when restricted to $\im F_\mu$, we deduce that $\ker\phi=\ker \hE_\mu$, hence $\phi$ induces an isomorphism $\hM_\mu/\ker \hE_\mu\cong\im F_\mu$.

Now let  $\hf_\mu\colon \hM_\mu\to N_\mu$ be the row vector with entries $F^N_{\beta,n}\circ f^\prime_{\mu+n\beta}\colon M_{\mu+n\beta}\to N_{\mu+n\beta}\to N_\mu$. Then   the diagram
 
\centerline{
\xymatrix{
M_{\delta\mu}\ar[d]_{\hF_\mu}\ar[r]^{f^\prime_{\delta\mu}}&N_{\delta\mu}\ar[d]^{F^N_\mu}\\
\hM_\mu\ar[r]^{\hf_\mu}& N_\mu
}
}
\noindent commutes. By the same arguments as above, also the diagram

\centerline{ 
\xymatrix{
M_{\delta\mu}\ar[r]^{f^\prime_{\delta\mu}}&N_{\delta\mu}\\
\hM_\mu\ar[u]^{\hE_\mu}\ar[r]^{\hf_\mu}& N_\mu\ar[u]_{E^N_\mu}
}
}
\noindent commutes.
As $\hF_\mu$ is surjective, the image of $\hf_\mu$ is contained in $\im F_\mu^N\subset N_\mu$. As $E_\mu^N$ is injective when restricted to $\im F_\mu^N$, we deduce that $\hf_\mu$ factors over the kernel of $\hE_\mu$. But, as we have seen above, this is the kernel of $\phi$. We hence obtain an induced homomorphism $\tf_\mu\colon \im F^M_\mu\cong\hM_\mu/\ker\phi\to N_\mu$ such that the diagrams

\centerline{
\xymatrix{
M_{\delta\mu}\ar[d]_{F_\mu^M}\ar[r]^{f^\prime_{\delta\mu}}&N_{\delta\mu}\ar[d]^{F^N_\mu}\\
\im F_\mu^M\ar[r]^{\tf_\mu}& N_\mu
}
\quad\quad
\xymatrix{
M_{\delta\mu}\ar[r]^{f^\prime_{\delta\mu}}&N_{\delta\mu}\\
\im F_\mu^M\ar[u]^{E^M_\mu}\ar[r]^{\tf_\mu}& N_\mu\ar[u]_{E^N_\mu}
}
}
\noindent commute. This shows the existence part of (1). The uniqueness is clear, as $F_\mu^M \colon M_{\delta\mu}\to \im F_\mu^M$ is surjective. 

Now we show part (2). Assume that property (a) holds, i.e.~there exists a homomorphism $f\colon M\to N$ that restricts to $f^\prime$. Then the diagram
 
\centerline{ 
\xymatrix{
M_{\delta\mu}\ar[rr]^{f^\prime_{\delta\mu}=f_{\delta\mu}}&&N_{\delta\mu}\\
M_\mu\ar[u]^{E^M_\mu}\ar[rr]^{f_\mu}&& N_\mu\ar[u]_{E^N_\mu}
}
}
\noindent commutes and hence $f^\prime_{\delta\mu}$ maps $M_{\lgl\mu\rgl}=E^M_\mu(M_\mu)$ into $N_{\lgl\mu\rgl}=E^N_\mu(N_\mu)$, so property (b) holds.

Now assume property (b) holds. We now need to construct an $\SA$-linear map $f_\mu\colon M_\mu\to N_\mu$ such that the diagrams

$$
%
\leqno{(\ast)}
\begin{gathered}\quad\quad
\hbox{
\xymatrix{
M_{\delta\mu}\ar[d]_{F^M_\mu}\ar[r]^{f^\prime_{\delta\mu}}&N_{\delta\mu}\ar[d]^{F_\mu^N}\\
M_\mu\ar[r]^{f_\mu}& N_\mu
}
\quad\quad
\xymatrix{
M_{\delta\mu}\ar[r]^{f^\prime_{\delta\mu}}&N_{\delta\mu}\\
M_\mu\ar[u]^{E_\mu^M}\ar[r]^{f_\mu}& N_\mu\ar[u]_{E_\mu^N}
}
}
\end{gathered}
$$
commute. By part (1), there exists a homomorphism $\tf_\mu\colon \im F_\mu^M\to N_\mu$ such that the diagrams

$$
\begin{gathered}
\xymatrix{
M_{\delta\mu}\ar[d]_{F_\mu^M}\ar[r]^{f^\prime_{\delta\mu}}&N_{\delta\mu}\ar[d]^{F_\mu^N}\\
\im F_\mu^M\ar[r]^{\tf_\mu}& N_\mu
}
\quad\quad
\xymatrix{
M_{\delta\mu}\ar[r]^{f^\prime_{\delta\mu}}&N_{\delta\mu}\\
\im F_\mu^M\ar[u]^{E_\mu^M}\ar[r]^{\tf_\mu}& N_\mu\ar[u]_{E_\mu^N}
}
\end{gathered}
$$
commute. By assumption (X3c), the quotient $M_\mu/\im F^M_\mu$ is a free $\SA$-module. We can hence fix a decomposition $M_\mu=\im F^M_\mu\oplus D$ with a free $\SA$-module $D$. We now construct a homomorphism $\hf_\mu\colon D\to N_\mu$ in such a way that  $f_\mu:=(\tf_\mu,\hf_\mu)$ serves our purpose. Note that no matter how we define $\hf_\mu$, we will always have $f_\mu\circ F_\mu^M=F_\mu^N\circ f_{\delta\mu}^\prime$ (cf.~the left diagram of $(\ast)$). So the only property that $\hf_\mu$ has to satisfy is that the diagram

\centerline{
\xymatrix{
M_{\delta\mu}\ar[r]^{f^\prime_{\delta\mu}}&N_{\delta\mu}\\
D\ar[u]^{E_\mu^M|_D}\ar[r]^{\hf_\mu}& N_\mu\ar[u]_{E_\mu^N}
}
}
\noindent commutes. Since we assume that $f^\prime_{\delta\mu}(E_\mu^M(M_{\mu}))$ is contained in the image of $E^N_\mu\colon N_\mu\to N_{\delta\mu}$, this also holds for $f^\prime_{\delta\mu}(E^M_\mu(D))$.  As $D$ is free, it is projective as an $\SA$-module. So $\hf_\mu$ indeed exists.
\end{proof}
\begin{remark} In part (2) of the lemma above, the extension $f$ of $f^\prime$ is in general not unique. In the notation of the proof of part (2), the $\SA$-linear homomorphism $\hf_\mu$ is in general not unique, nor is the decomposition $M_\mu=\im F_\mu^M\oplus D$.
\end{remark}

\subsection{Minimal and  maximal  objects}
 Let $I$ be a closed subset of $X$ and let  $M$ be an object in $\CX_I$. Let $\mu\in I$. Define
$$
M_{\{\mu\},\max}:=\{m\in M_{\delta\mu}\mid \xi m \in M_{\{\mu\}}\text{ for some $\xi\in\SA$, $\xi\ne 0$}\}.
$$
So this is the preimage of the torsion part of $M_{\delta\mu}/M_{\{\mu\}}$ under the quotient map. 
Suppose that $N$ is a submodule of $M_{\delta\mu}$ that contains $M_{\{\mu\}}$.
Then $N/M_{\{\mu\}}$ is a torsion module if and only if $N\subset M_{\{\mu\},\max}$. In particular, condition (X3b) now reads 
$
M_{\lgl\mu\rgl}\subset M_{\{\mu\},\max}
$.
So we have inclusions 
 $$
M_{\{\mu\}}\subset M_{\lgl\mu\rgl}\subset M_{\{\mu\},\max}\subset M_{\delta\mu}.
$$
We now give the two extreme cases a name.
 \begin{definition} $M$ is called 
\begin{enumerate}
\item 
 {\em minimal}, if  for all $\mu\in I$ we have $M_{\{\mu\}}=M_{\lgl\mu\rgl}$.
 \item 
 {\em maximal}, if  for all $\mu\in I$ we have $M_{\lgl\mu\rgl}=M_{\{\mu\},\max}$.
 \end{enumerate}
\end{definition}

In the two extreme cases we can extend morphisms according to the following result.
\begin{lemma}\label{lemma-minimalextmor} Suppose that  $I^\prime\subset X$ is closed and that $\mu\not\in I^\prime$ is such that $I:=I^\prime\cup\{\mu\}$ is closed in $X$ as well. 
 Let $M$ and $N$ be objects in $\CX_I$ and suppose that $M_{\{\mu\}}=M_{\lgl\mu\rgl}$ or $N_{\lgl\mu\rgl}=N_{\{\mu\},\max}$. Then the functorial map
$$
\Hom_{\CX_I}(M,N)\to \Hom_{\CX_{I^\prime}}(M_{I^\prime},N_{I^\prime})
$$
is surjective. 
\end{lemma}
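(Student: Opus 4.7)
The plan is to apply Proposition \ref{prop-mainext}(2), so the whole task reduces to verifying its criterion (b): given a morphism $f'\colon M_{I'}\to N_{I'}$ in $\CX_{I'}$, we must check that the induced homomorphism $f'_{\delta\mu}\colon M_{\delta\mu}\to N_{\delta\mu}$ maps $M_{\lgl\mu\rgl}$ into $N_{\lgl\mu\rgl}$. The key observation that makes this well-posed is that $M_{\delta\mu}$ and $N_{\delta\mu}$ are concentrated in weights of the form $\mu+n\alpha$ with $\alpha\in\Pi$ and $n>0$; since $I$ is closed and $\mu\notin I'$, all such weights lie in $I'$, so $f'_{\delta\mu}$ is assembled from components of $f'$.

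The starting point in both cases is Proposition \ref{prop-mainext}(1), which already gives the weaker inclusion $f'_{\delta\mu}(M_{\{\mu\}})\subset N_{\{\mu\}}$. In the first case, where $M_{\{\mu\}}=M_{\lgl\mu\rgl}$, the desired inclusion
\[
f'_{\delta\mu}(M_{\lgl\mu\rgl})=f'_{\delta\mu}(M_{\{\mu\}})\subset N_{\{\mu\}}\subset N_{\lgl\mu\rgl}
\]
is then immediate, and Proposition \ref{prop-mainext}(2) delivers the required extension.

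In the second case, where $N_{\lgl\mu\rgl}=N_{\{\mu\},\max}$, I would take an arbitrary $m\in M_{\lgl\mu\rgl}$ and use condition (X3b) applied to $M$: since $M_{\lgl\mu\rgl}/M_{\{\mu\}}$ is $\SA$-torsion, there exists a nonzero $\xi\in\SA$ with $\xi m\in M_{\{\mu\}}$. Applying $f'_{\delta\mu}$ and using the inclusion from Proposition \ref{prop-mainext}(1), we obtain
\[
\xi\cdot f'_{\delta\mu}(m)=f'_{\delta\mu}(\xi m)\in f'_{\delta\mu}(M_{\{\mu\}})\subset N_{\{\mu\}}.
\]
By the very definition of $N_{\{\mu\},\max}$ this means $f'_{\delta\mu}(m)\in N_{\{\mu\},\max}=N_{\lgl\mu\rgl}$, again yielding the criterion of Proposition \ref{prop-mainext}(2).

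There is essentially no obstacle beyond bookkeeping: the substantive work was done in Proposition \ref{prop-mainext}, and the two hypotheses have been tailored precisely to force the containment $f'_{\delta\mu}(M_{\lgl\mu\rgl})\subset N_{\lgl\mu\rgl}$, either by making the source coincide with $M_{\{\mu\}}$ (case 1) or by enlarging the target to swallow the torsion (case 2). No genericity assumption on $f'$ is needed, and since the argument is entirely about containments of submodules, surjectivity of the restriction map follows at once.
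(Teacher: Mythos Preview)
Your proof is correct and follows essentially the same route as the paper's own proof: both reduce to verifying criterion (2b) of Proposition \ref{prop-mainext} using the inclusion $f'_{\delta\mu}(M_{\{\mu\}})\subset N_{\{\mu\}}$ from part (1), then handle the two hypotheses separately. The paper phrases case 2 slightly more abstractly---it first observes that $f'_{\delta\mu}$ maps $M_{\{\mu\},\max}$ into $N_{\{\mu\},\max}$ and then uses $M_{\lgl\mu\rgl}\subset M_{\{\mu\},\max}$---while you unpack this with an element-wise torsion argument, but the content is identical.
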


\begin{proof} Let $f^\prime\colon M_{I^\prime}\to N_{I^\prime}$ be a morphism in $\CX_{I^\prime}$.  By Proposition \ref{prop-mainext}, there exists a (unique) $\tf_\mu\colon \im F^M_\mu\to N_\mu$ such that the diagrams 

\centerline{
\xymatrix{
M_{\delta\mu}\ar[d]_{F_\mu^M}\ar[r]^{f^\prime_{\delta\mu}}&N_{\delta\mu}\ar[d]^{F_\mu^N}\\
\im F_\mu^M\ar[r]^{\tf_\mu}& N_\mu
}
\quad\quad
\xymatrix{
M_{\delta\mu}\ar[r]^{f^\prime_{\delta\mu}}&N_{\delta\mu}\\
\im F_\mu^M\ar[u]^{E_\mu^M}\ar[r]^{\tf_\mu}& N_\mu\ar[u]_{E_\mu^N}
}
}
\noindent commute. This implies that  $f^\prime_{\delta\mu}$ maps $M_{\{\mu\}}$  into $N_{\{\mu\}}$ and hence $M_{\{\mu\},\max}$  into $N_{\{\mu\},\max}$. Either of the two  assumptions in the statement of the lemma implies that $f^\prime_{\delta\mu}$ maps  $M_{\lgl\mu\rgl}$ into $N_{\lgl\mu\rgl}$, so the condition (2b)  in Proposition \ref{prop-mainext} is satisfied. Hence there exists an extension $f\colon M\to N$ of $f^\prime$. 
\end{proof}

\section{The minimal extension functor}\label{sec-ExtObj}
Again we retain all notations. In the last section we studied assumptions that ensure that morphisms in $\CX$ can be extended. In this section we want to extend objects, i.e.~we construct for each pair $I^\prime\subset I$ of closed subsets of $X$ a functor $\mathsf{E}_{I^\prime}^I\colon \CX_{I^\prime}\to \CX_I$ that is fully faithful and left adjoint to the restriction functor $(\cdot)_{I^\prime}\colon \CX_I\to\CX_{I^\prime}$. We call it the {\em minimal extension functor}.

\subsection{The minimal extension object}\label{subsec-minext}  
In this section we construct a {\em minimal extension} in $\CX_I$ for any object in $\CX_{I^\prime}$. In the next section, we show that the construction is actually functorial.
 
\begin{proposition}\label{prop-minimalextobj} Let $I^\prime\subset I$ be a pair of closed subsets of $X$. Let $M^\prime$ be an object in $\CX_{I^\prime}$. 
\begin{enumerate}
\item There exists an up to isomorphism unique object $M$ in $\CX_I$ with the following properties.
\begin{enumerate}
\item The object $M$ restricts to $M^\prime$, i.e.~$M_{I^\prime}\cong M^\prime$. 
\item For all objects $N$ in $\CX_I$  the functorial homomorphism
$$
\Hom_{\CX_I}(M,N)\to \Hom_{\CX_{I^{\prime}}}(M_{I^{\prime}},N_{I^{\prime}})
$$
is an isomorphism. 
\end{enumerate}
\item For the object $M$ characterized in part (1) we have $M_\mu=\im F_\mu$ and hence $M_{\{\mu\}}=M_{\lgl\mu\rgl}$ for all $\mu\in I\setminus I^\prime$, and $M_\mu\ne 0$ implies that there exists a weight $\lambda$ of $M^\prime$ with $\mu\le\lambda$. 
\end{enumerate}
\end{proposition}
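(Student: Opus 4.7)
The plan is to construct $M$ by a well-founded induction, adjoining the weights in $I\setminus I'$ one at a time in an order refining $\le$ (from top down), using at each step the auxiliary construction from the proof of Proposition~\ref{prop-mainext}. The quasi-boundedness of $M'$ in (X1) guarantees that at every stage only finitely many weight spaces above the new weight are nonzero, so all sums in sight are finite; it also lets me maintain the support condition in (2) inductively by only adjoining weights $\mu$ that lie below some weight of $M'$.

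For the inductive step, suppose I have built $\tM\in \CX_{I''}$ for some closed $I'\subset I''\subsetneq I$ with $\tM_{I'}\cong M'$ and $\tM_\nu=\im F^{\tM}_\nu$ for all $\nu\in I''\setminus I'$. Choose $\mu\in I\setminus I''$ maximal such that $I''\cup\{\mu\}$ is closed in $X$. Mimicking the first half of the proof of Proposition~\ref{prop-mainext}, form the auxiliary module $\hM_\mu:=\bigoplus_{\beta\in\Pi, n>0}\tM_{\mu+n\beta}$ with its formal inclusions $\hF_{\beta,n}$, and define $\hE_{\alpha,m}\colon \hM_\mu\to \tM_{\mu+m\alpha}$ by the Serre-type right-hand sides prescribed by (X2). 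Let $\hE_\mu$ be the column vector of the $\hE_{\alpha,m}$, declare
$$M_\mu:=\hM_\mu/\ker \hE_\mu,$$
and let $F_{\mu,\beta,n}$ and $E_{\mu,\alpha,m}$ be the maps induced, respectively, by $\hF_{\beta,n}$ and $\hE_{\alpha,m}$. By construction $M_\mu=\im F^M_\mu$, so (X3a) is immediate (we quotiented by exactly the kernel), (X3c) is trivial because $M_\mu/\im F^M_\mu=0$ is free, and (X3b) collapses to the equality $M_{\{\mu\}}=M_{\lgl\mu\rgl}$. Finite generation of $M_\mu$ follows from that of the $\tM_{\mu+n\beta}$ and the Noetherian hypothesis.

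The technical heart of the argument is the verification of (X2) at the new weight $\mu$. The relations $E_{\mu,\alpha,m}F_{\mu,\beta,n}=\cdots$ are built into the definition of the $\hE_{\alpha,m}$, but one must also check the cross-relations $E_{\nu,\alpha,m}F_{\nu-m\alpha,\beta,n}$ for $\nu\in I''$ that involve a downward jump to $\mu$; these follow from (X2) already holding at weights strictly above $\mu$, together with a diagram chase modulo $\ker\hE_\mu$ directly parallel to the one establishing the isomorphism $\hM_\mu/\ker\phi\cong\im F_\mu$ in the proof of Proposition~\ref{prop-mainext}. I expect this step to be the main obstacle and the lengthiest part of the verification.

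Finally, for the universal property (1b), let $N\in\CX_I$ and let $f'\colon M_{I'}\to N_{I'}$ be a morphism. Apply Lemma~\ref{lemma-minimalextmor} inductively at each new weight: since $M$ is minimal at $\mu$, an extension exists, and Proposition~\ref{prop-mainext}(1) provides a \emph{unique} canonical map $\tf_\mu\colon \im F^M_\mu\to N_\mu$. Because $M_\mu=\im F^M_\mu$ by construction, this $\tf_\mu$ is forced to be $f_\mu$, so the extension is unique at every step. Therefore the functorial restriction map is bijective, and uniqueness of $M$ up to isomorphism is the standard consequence of the universal property, realized by the identity on $M_{I'}$.
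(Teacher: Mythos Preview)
Your construction is essentially identical to the paper's: reduce to adjoining a single weight $\mu$, build $\hM_\mu$, define $\hE_{\alpha,m}$ via the right-hand sides of (X2), and set $M_\mu=\hM_\mu/\ker\hE_\mu$. The verification of (X3) and of the universal property via Proposition~\ref{prop-mainext}(1) (using that $M_\mu=\im F_\mu^M$, so $\tf_\mu$ is forced) is the same as well.

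One point where you make your life harder than necessary: the ``cross-relations'' you anticipate as the technical heart do not exist. For any $\nu\in I''$ the relation (X2) at weight $\nu$ only involves the operators $E_{\nu,\alpha,m}$, $F_{\nu,\beta,n}$ and, on the right-hand side, operators indexed by weights $\nu+m\alpha$, $\nu+n\beta$, $\nu+(n+m-r)\alpha$, all of which are $\ge\nu$ and hence lie in $I''$ by closedness. None of these equals the new weight $\mu$ (since $\mu\notin I''$), so the relation at $\nu$ is inherited verbatim from $\tM$. The only new instance of (X2) is at $\nu=\mu$, and that is literally the definition of $\hE_{\alpha,m}$ on $\hF_{\beta,n}(v)$. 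So the paper dispatches (X2) in one sentence, and you can too.

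A small organizational remark: the paper first sets $M_\mu=0$ for all $\mu\in I\setminus I'$ lying below no weight of $M'$ (the set $S$), and only then inducts over the remaining quasi-bounded set $I\setminus(I'\cup S)$. Your phrasing ``only adjoining weights $\mu$ that lie below some weight of $M'$'' captures the same idea, but note that $I\setminus I'$ itself need not be quasi-bounded, so you cannot literally list its elements top-down; the paper's two-step organization makes the well-foundedness of the induction explicit.
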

\begin{proof} Note that the uniqueness statement in (1) follows directly  from properties (1a) and  (1b).  So, in order to prove (1), we   only need to show the existence of an object $M$ satisfying (1a) and (1b).  For this we  give an explicit construction. Note that we can construct $M$ in steps, i.e.~if we have closed subsets $I^\prime\subset I^{\prime\prime}\subset I$ and we construct an object $M^{\prime\prime}$ in $\CX_{I^{\prime\prime}}$ satisfying properties (1a), (1b) and (2) with respect to $M^\prime$, and then we construct an object $M$ in $\CX_I$ that satisfies properties (1a), (1b) and (2) with respect to $M^{\prime\prime}$, then $M$ satisfies (1a), (1b) and (2) with respect to $M^\prime$.

So in a first step we set $M_\nu=M^\prime_\nu$, $E_{\nu,\alpha,n}^{M}=E_{\nu,\alpha,n}^{M^\prime}$ and $F_{\nu,\alpha,n}^{M}=F_{\nu,\alpha,n}^{M^\prime}$ for all $\nu\in I^\prime$, $\alpha\in\Pi$ and $n>0$ in order to make sure that (1a) is satisfied.  
Now let $S$ be the set of all   $\mu\in I\setminus I^\prime$ that have the property that  there exists no $\lambda$ with $\mu<\lambda$ such that $M^\prime_\lambda\ne\{0\}$. For all $\mu\in S$ we set $M_\mu=0$, $E^M_{\mu,\alpha,n}=0$ and $F_{\mu,\alpha,n}^M=0$. Note that $I^\prime\cup S$ is closed. Now we have constructed an object $M$ in $\CX_{I^\prime\cup S}$ that obviously satisfies (1a). For all $\nu\in S\setminus I^\prime$ we have $M_\nu=0$, hence $0=M_{\{\nu\}}=M_{\lgl\nu\rgl}$, hence (2) is satisfied as well. As $\im F_\nu=M_\nu=0$ for all these $\nu$, property (1b) follows from part (1) in Proposition \ref{prop-mainext}.

Hence we have extended $M^\prime$ to an object in $\CX_{I^\prime\cup S}$ in such a way that properties (1a), (1b) and (2) are satisfied. 
Now note that the set $I\setminus (I^\prime\cup S)$ is quasi-bounded because the set of weights of $M^\prime$ is. We can now proceed by induction and assume that $I\setminus (I^\prime\cup S)=\{\mu\}$ for a single element $\mu\in I$. 
Then we can already define $M_{\delta\mu}:=\bigoplus_{\beta\in\Pi, n>0} M_{\mu+n\beta}$. For the construction of $M_\mu$ and $E_{\mu,\alpha,n}$ and $F_{\mu,\beta,n}$ we follow ideas that were already used  in the proof of Proposition \ref{prop-mainext}. So in a first step we  set $\hM_\mu:=\bigoplus_{\beta\in\Pi,n>0} M_{\mu+n\beta}$ and denote by $\hF_{\beta,n}\colon M_{\mu+n\beta}\to \hM_{\mu}$ the canonical injection of a direct summand. We let $\hF_\mu\colon M_{\delta\mu}\to\hM_\mu$ be the row vector with entries $\hF_{\beta,n}$ (again, $\hF_\mu$ is the identity).  For $\alpha\in\Pi$, $m>0$  define an $\SA$-linear map $\widehat E_{\alpha,m}\colon \hM_\mu\to M_{\mu+m\alpha}$ by additive extension of the following formulas. For $\beta\in\Pi$, $n>0$ and $v\in M_{\mu+n\beta}$ set
$$
\widehat E_{\alpha,m}\widehat F_{\beta,n}(v):=
\begin{cases}
F_{\beta,n}E_{\alpha,m}(v),&\text{ if $\alpha\ne\beta$},\\
\sum_{r=0}^{\min(m,n)} \qchoose{\lgl\mu,\alpha^\vee\rgl+m+n}{ r}_{d_\alpha} F_{\alpha,n-r}E_{\alpha,m-r}(v), &\text{ if $\alpha=\beta$}.
\end{cases}
$$
We denote by $\hE_\mu\colon \hM_\mu\to M_{\delta\mu}$ the column vector with entries $\hE_{\alpha,m}$. 
Now define $M_\mu:=\hM_\mu/\ker \hE_\mu$, and denote by $E_\mu\colon M_\mu\to M_{\delta\mu}$ and $F_\mu\colon M_{\delta\mu}\to M_\mu$ the homomorphisms induced by $\hE_\mu$ and $\hF_\mu$, resp. Note that $F_\mu$ is surjective since $\hF_\mu$ is. Denote by $E_{\mu,\alpha,n}\colon M_\mu\to M_{\mu+n\alpha}$ and by $F_{\mu,\alpha,n}\colon M_{\mu+n\alpha}\to M_\mu$ the entries of the row vector $E_\mu$ and the column vector $F_\mu$, resp. 

We claim that the above data  yields  an object in $\CX_I$. Clearly, property (X1) is satisfied. Also, the commutation relations between the $E$- and $F$-maps follow from the  relations between the respective maps on $M^\prime$ and the construction of  $E_\mu$ and $F_\mu$. Hence (X2) is satisfied as well. The properties (X3) are satisfied for all weights $\nu$ with $\nu\ne\mu$, as they are satisfied for $M^\prime$. For the weight $\mu$, however, we have  $\ker E_\mu=\{0\}$, hence (X3a) is satisfied, and $M_\mu=\im F_\mu$, so $M_{\lgl\mu\rgl}=M_{\{\mu\}}$, which imply (X3b) and (X3c). 

It remains to show that the object $M$ satisfies the properties (1a) and (1b). Part (1a) is clear from the construction. Part (1b) follows from $M_\mu=\im F_\mu$ and part (1) of Proposition  \ref{prop-mainext}.  Hence (1) is proven. Clearly $M_\mu\ne 0$ implies $M^\prime_{\delta\mu}\ne 0$, hence there is some weight $\lambda$ of $M^\prime$ with $\mu\le\lambda$. Since $M_{\mu}=\im F_\mu$ we have $M_{\{\mu\}}=M_{\lgl\mu\rgl}$, hence (2). 
\end{proof}

\subsection{A first example}\label{ex-first} 
Suppose that $R=\{\pm\alpha\}$ is the root system of type $A_1$ with $\Pi=\{\alpha\}$. We identify the weight lattice $X$ with $\DZ$ by sending $\alpha\in X$ to $2\in\DZ$. Let $\lambda\in\DZ$ and set $I^\prime=\{\lambda,\lambda+2,\lambda+4,\dots\}$, $\mu=\lambda-2$ and $I=I^\prime\cup\{\mu\}$. Define $M^\prime:=\bigoplus_{\nu\in I^\prime}M^\prime_\nu$ with $M^\prime_\lambda=\SA$ and $M^\prime_\nu=0$ for $\nu\ne\lambda$. All relevant $E$- and $F$-maps are zero, of course. In the notation of the proof of Proposition \ref{prop-minimalextobj} we have $\widehat M_{\mu}=M_{\delta_\mu}=\SA$ and $\widehat F_\mu\colon M_{\delta\mu}\to \widehat M_\mu$ is the identity. In the next step we define the map $\widehat E_\mu\colon \widehat M_{\mu}\to M_{\delta\mu}$. The commutation relations force this map to be multiplication with the image of $\qchoose{\mu+2}{1}=[\lambda]$ in $\SA$. So if  $[\lambda]$ does not vanish in $\SA$, the map $\widehat E_\mu$  is injective, so $M_\mu=\SA$ and $E_\mu\colon \SA\to\SA$ is multiplication with $[\lambda]$, while $F_\mu\colon\SA\to\SA$ is the identity.  If $[\lambda]$ vanishes in $\SA$, then  $M_\mu=0$. 

Let us go one step further. Suppose that $[\lambda]$ did not vanish in $\SA$. We would like to find an object $N$ that extends the object $M$ just defined to the weight $\mu:=\lambda-4$. We now have $N_{\delta\mu}=\widehat N_\mu=\SA\oplus\SA$ and $\widehat F_\mu\colon N_{\delta\mu}\to \widehat N_\mu$ is again the identity. Now let $v$ be a generator of $N_\lambda=M_\lambda=\SA$. Then $v$ and $F_{1}v$ form a basis of $N_{\delta\mu}$, and $\widehat F_2 v$, $\widehat F_1 F_1 v$  a basis of $\widehat N_\mu$. Using the commutation relations we obtain
\begin{align*}
\widehat E_1\widehat F_2 v&=F_2 E_1 v + \qchoose{\lambda-4+1+2}{1}F_1 v=0+ [\lambda-1] F_1v,\\
\widehat E_1\widehat F_1 F_1 v&= F_1 E_1 F_1 v +  \qchoose{\lambda-4+1+1}{1} F_1 v=[\lambda]F_1 v+[\lambda-2] F_1 v\\
\widehat E_2\widehat F_2 v &= F_2 E_2 v+\qchoose{\lambda-4+2+2}{1} F_1E_1v+\qchoose{\lambda-4+2+2}{2} v\\
&=0+0+\qchoose{\lambda}{2}v,\\
\widehat E_2\widehat F_1 F_1 v &= F_1 E_2 F_1 v+\qchoose{\lambda-4+2+1}{1}E_1 F_1 v\\
&=0+[\lambda-1][\lambda]v.
\end{align*}
Hence $\widehat E_\mu$ is given by the matrix $\left(\begin{matrix}[\lambda-1] & \frac{[\lambda][\lambda-1]}{[2]} \\ [\lambda]+[\lambda-2]&[\lambda][\lambda-1]\end{matrix}\right)$. Suppose that $[2]$ is invertible in $\SA$. A short calculation shows $\frac{[\lambda]}{[2]}([\lambda]+[\lambda-2])=[\lambda][\lambda-1]$. So the second column is $\frac{[\lambda]}{[2]}$ times the first column, hence the kernel of $\widehat E_\mu$ has at least rank $1$.  It has rank $2$  if $[\lambda-1]$ vanishes in $\SA$. If the kernel has rank $1$, then $N_{\lambda-4}\cong \SA$ and $E_{\lambda-4}$ is given by the first column of the above matrix. If the rank of the kernel is $2$, then $N_{\lambda-4}$ vanishes. 
\subsection{Functoriality} 

In this section we show that the construction of the object $M$ in Proposition \ref{prop-minimalextobj} yields a functor $\mathsf{E}\colon \CX_{I^\prime}\to\CX_I$.
\begin{proposition}\label{prop-minextfun} Let $I^\prime\subset I$ be a pair of closed subsets. Then there exists a functor $\mathsf{E}=\mathsf{E}_{I^\prime}^I\colon \CX_{I^\prime}\to\CX_I$ with the following properties.
\begin{enumerate}
\item The functor $\mathsf{E}$ is left adjoint to the restriction functor $(\cdot)_{I^\prime}\colon\CX_I\to\CX_{I^\prime}$.
\item The canonical transformation $\id_{\CX_{I^\prime}}\to(\cdot)_{I^\prime}\circ \mathsf{E}$ coming from the adjointness in (1) is an isomorphism of functors.
\item The functorial homomorphism $\Hom_{\CX_{I^\prime}}(M^\prime,N^\prime)\to \Hom_{\CX_I}(\mathsf{E}(M^\prime),\mathsf{E}(N^\prime))$ is an isomorphism.
\item For all $\mu\in I\setminus I^\prime$ and all $M^\prime$ in $\CX_{I^\prime}$ we have $\mathsf{E}(M^\prime)_{\mu}=\im F_\mu$ and hence $\mathsf{E}(M^\prime)_{\{\mu\}}=\mathsf{E}(M^\prime)_{\lgl\mu\rgl}$ and $\mathsf{E}(M^\prime)_\mu\ne 0$ implies that there exists a weight  $\lambda$  of $M^\prime$ with $\mu\le\lambda$. 
\end{enumerate}
\end{proposition}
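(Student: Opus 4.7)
The plan is to bootstrap everything from the universal property of the minimal extension object established in Proposition \ref{prop-minimalextobj}. For each object $M^\prime$ in $\CX_{I^\prime}$ I would fix once and for all an object $\mathsf{E}(M^\prime)\in\CX_I$ satisfying (1a) and (1b) of that proposition, together with a distinguished isomorphism $\iota_{M^\prime}\colon M^\prime\xrightarrow{\sim}(\mathsf{E}(M^\prime))_{I^\prime}$ in $\CX_{I^\prime}$.

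To define $\mathsf{E}$ on morphisms, take $f^\prime\colon M^\prime\to N^\prime$ in $\CX_{I^\prime}$. The composite $\iota_{N^\prime}\circ f^\prime\circ\iota_{M^\prime}^{-1}$ is a morphism $(\mathsf{E}(M^\prime))_{I^\prime}\to(\mathsf{E}(N^\prime))_{I^\prime}$ in $\CX_{I^\prime}$. By Proposition \ref{prop-minimalextobj}(1b) applied with $N=\mathsf{E}(N^\prime)$, the restriction map
$$
\Hom_{\CX_I}(\mathsf{E}(M^\prime),\mathsf{E}(N^\prime))\to\Hom_{\CX_{I^\prime}}((\mathsf{E}(M^\prime))_{I^\prime},(\mathsf{E}(N^\prime))_{I^\prime})
$$
is a bijection, and I would define $\mathsf{E}(f^\prime)$ as the unique preimage of the above composite. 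The functor axioms $\mathsf{E}(\id)=\id$ and $\mathsf{E}(g^\prime\circ f^\prime)=\mathsf{E}(g^\prime)\circ\mathsf{E}(f^\prime)$ then follow immediately from the uniqueness clause of this bijection.

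With $\mathsf{E}$ so constructed, claim (1) is obtained by composing the universal bijection of (1b) with the isomorphism induced by $\iota_{M^\prime}$; naturality in $N$ is built into the restriction functor, and naturality in $M^\prime$ follows from the very construction of $\mathsf{E}$ on morphisms. Claim (2) asserts that the unit of this adjunction is an isomorphism, which is transparent since the unit is by construction precisely $\iota_{M^\prime}$. For claim (3) I would observe that fully-faithfulness of $\mathsf{E}$ is the standard consequence of the unit of an adjunction being an isomorphism; equivalently, one obtains the inverse of the map in (3) by applying $(\cdot)_{I^\prime}$ and then conjugating by the isomorphisms $\iota$. Claim (4) is nothing but part (2) of Proposition \ref{prop-minimalextobj}, which is built into $\mathsf{E}(M^\prime)$ by construction.

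Since the substantive content -- the inductive construction of $M_\mu$ via the auxiliary maps $\widehat E_\mu$, $\widehat F_\mu$ and the verification of axioms (X1)--(X3) at each new weight -- was already carried out in Proposition \ref{prop-minimalextobj}, I do not anticipate a serious obstacle here. The only mild subtlety is to fix the extensions $\mathsf{E}(M^\prime)$ and the isomorphisms $\iota_{M^\prime}$ coherently at the outset, after which the functoriality and naturality assertions (1)--(3) become formal consequences of the uniqueness part of the universal property.
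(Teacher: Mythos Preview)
Your proposal is correct and follows essentially the same approach as the paper: fix for each $M^\prime$ a minimal extension together with an isomorphism to its restriction (the paper calls this $\tau^{M^\prime}$, you call it $\iota_{M^\prime}$), define $\mathsf{E}$ on morphisms via the bijection of Proposition~\ref{prop-minimalextobj}(1b), and then read off (1)--(4) as formal consequences. The paper's proof is slightly terser but the logical structure is identical.
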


Due to property (4) we call $\mathsf{E}_{I^\prime}^I(M)$ the {\em minimal extension} of $M$. 
\begin{proof} For any object $M^\prime$ in $\CX_{I^\prime}$ we fix an object $M$ satisfying the properties in Proposition \ref{prop-minimalextobj}. We also fix an (arbitrary) isomorphism $\tau^{M^\prime}\colon M^\prime\xrightarrow{\sim} M_{I^\prime}$. Then we set $\mathsf{E}(M^\prime)=M$. Now let $f\colon M^\prime\to N^\prime$ be a homomorphism in $\CX_{I^\prime}$. Then Proposition \ref{prop-minimalextobj} shows that the homomorphism $\Hom_{\CX_I}(\mathsf{E}(M^\prime),\mathsf{E}(N^\prime))\to\Hom_{\CX_{I^\prime}}(\mathsf{E}(M^\prime)_{I^\prime},\mathsf{E}(N^\prime)_{I^\prime})$ that is given by the functor $(\cdot)_{I^\prime}$ is an isomorphism. The fixed identifications $\tau^{M^\prime}$ and $\tau^{N^\prime}$ allow us to obtain an isomorphism $\Hom_{\CX_I}(\mathsf{E}(M^\prime),\mathsf{E}(N^\prime))\xrightarrow{\sim}\Hom_{\CX_{I^\prime}}(M^\prime,N^\prime)$. We define the homomorphism $\mathsf{E}(f)\colon\mathsf{E}(M^\prime)\to\mathsf{E}(N^\prime)$ as the preimage of $f$. A moment's thought shows that the map $f\mapsto \mathsf{E}(f)$ is compatible with compositions and respects the identity morphisms. Hence we indeed obtain a functor $\mathsf{E}\colon \CX_{I^\prime}\to\CX_I$. 

We need to show that this functor has the required properties. By our construction, the fixed isomorphisms  $\tau^{M^\prime}\colon M^\prime\to M_{I^\prime}$ yield a natural transformation $\id_{\CX_{I^\prime}}\to (\cdot)_{I^\prime}\circ\mathsf{E}$, hence (2). This natural transformation induces a homomorphism 
$$
\Hom_{\CX_I}(\mathsf{E}(M^\prime),N)\to\Hom_{\CX_{I^\prime}}(M^\prime,N_{I^\prime})
$$
that is functorial in both $M^\prime$ and $N$. As $\tau^{M^\prime}$ is an isomorphism, it follows from Proposition \ref{prop-minimalextobj} that the above is a bijection, i.e.~$\mathsf{E}$ is left adjoint to $(\cdot)_{I^\prime}$, hence (1). Finally, (3) follows from the construction of the functor $\mathsf{E}$, and (4) is property (2) in Proposition \ref{prop-minimalextobj}.
\end{proof}

\subsection{The category of minimal objects}
The properties of the minimal extension functors allow us now to classify all minimal objects in $\CX$.
\begin{proposition}\label{prop-catminimal} Suppose that $\SA$ is a unital Noetherian $\SZ$-domain.
\begin{enumerate}
\item  For all $\lambda\in X$ there exists an up to isomorphism unique object $S_{\min}(\lambda)$ in $\CX$ with the following properties.
\begin{enumerate}
\item $S_{\min}(\lambda)_\lambda$ is free of rank $1$ and $S_{\min}(\lambda)_\mu\ne\{0\}$ implies $\mu\le\lambda$. 
\item $S_{\min}(\lambda)$ is indecomposable and minimal. 
\end{enumerate}
\end{enumerate}
Moreover, the objects $S_{\min}(\lambda)$ characterized in (1) have the following properties.
\begin{enumerate}\setcounter{enumi}{1}
\item For all $\lambda\in X$ we have $\End_{\CX}(S_{\min}(\lambda))=\SA\cdot\id$, and $\Hom_{\CX}(S_{\min}(\lambda),S_{\min}(\mu))=0$ for $\lambda\ne\mu$.
\item Let $S$ be a  minimal  object in $\CX$. Then there is an index set $J$ and some elements $\lambda_i\in X$ for $i\in J$ such that $S\cong \bigoplus_{i\in J}S_{\min}(\lambda_i)$. 
\end{enumerate}
\end{proposition}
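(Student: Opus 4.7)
The plan is to construct $S_{\min}(\lambda)$ as a minimal extension, verify its basic properties and the $\Hom$-computations in (2) directly from Proposition~\ref{prop-minextfun}, then prove the decomposition statement in (3), and deduce the uniqueness in (1) from (3).

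For the construction, I would take $I':=\{\nu\in X\mid\nu\ge\lambda\}$, which is closed, and let $M'\in\CX_{I',\SA}$ have $M'_\lambda=\SA$, $M'_\nu=0$ for $\nu>\lambda$, and all $E$- and $F$-maps zero; axioms (X1)--(X3) are immediate. Set $S_{\min}(\lambda):=\mathsf{E}_{I'}^X(M')$. Property (1a) then follows from Proposition~\ref{prop-minextfun}(2) and~(4), and minimality at each $\mu\in X$ follows from Proposition~\ref{prop-minextfun}(4) together with the observation that $S_{\min}(\lambda)_{\delta\mu}=0$ whenever $\mu\in I'$. For (2), Proposition~\ref{prop-minextfun}(3) yields $\End_\CX(S_{\min}(\lambda))\cong\End_{\CX_{I'}}(M')\cong\SA$, giving indecomposability in (1b) as $\SA$ has no nontrivial idempotents. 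The same universal property reduces $\Hom_\CX(S_{\min}(\lambda),S_{\min}(\mu))$ for $\lambda\ne\mu$ to elements $v\in\ker E_\lambda^{S_{\min}(\mu)}\cap S_{\min}(\mu)_\lambda$: if $\lambda\not\le\mu$ this weight space is zero, while if $\lambda<\mu$ then Proposition~\ref{prop-minextfun}(4) gives $S_{\min}(\mu)_\lambda=\im F_\lambda$, on which $E_\lambda$ is injective by (X3a).

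The heart of the argument is part (3). For a minimal object $S$, Lemma~\ref{lemma-minimal} combined with (X3a) gives the direct-sum decomposition $S_\mu=\ker E_\mu^S\oplus\im F_\mu^S$, so by (X3c) the module $\ker E_\mu^S$ is free of finite rank; fix bases $\{v_i^\mu\}_{i\in J_\mu}$. Exactly as in the $\Hom$-calculation above, each $v_i^\mu$ induces a morphism $S_{\min}(\mu)\to S$, and assembling these gives
$$
\phi\colon T:=\bigoplus_{\mu\in X}\bigoplus_{i\in J_\mu}S_{\min}(\mu)\longrightarrow S,
$$
with the sum locally finite at each weight by the quasi-boundedness of $\supp S$ and the finiteness of each $J_\mu$. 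I claim $\phi_\nu$ is an isomorphism of $\SA$-modules for every $\nu$, which I would prove by downward induction on $\nu$.

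The crucial technical ingredient is the identity $\ker F_\mu=\ker\widehat E_\mu$ valid in any object of $\CX$: from $E_\mu\circ F_\mu=\widehat E_\mu$ combined with the injectivity of $E_\mu$ on $\im F_\mu$ in (X3a), both inclusions follow. Assume inductively that $\phi_{\delta\nu}$ is an isomorphism. Since the formula defining $\widehat E_\nu$ uses only the $E$- and $F$-maps on the weight spaces of $M_{\delta\nu}$, the morphism $\phi_{\delta\nu}$ intertwines $\widehat E_\nu^T$ and $\widehat E_\nu^S$, so it restricts to an isomorphism $\ker F_\nu^T=\ker\widehat E_\nu^T\xrightarrow{\sim}\ker\widehat E_\nu^S=\ker F_\nu^S$. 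Passing to quotients under the $F_\nu$-surjections, $\phi_{\delta\nu}$ induces an isomorphism $\im F_\nu^T\xrightarrow{\sim}\im F_\nu^S$ that is precisely the restriction of $\phi_\nu$. On the other hand, the restriction of $\phi_\nu$ to $\ker E_\nu^T=\SA^{|J_\nu|}$ is an isomorphism onto $\ker E_\nu^S$ by construction. Since both $T$ and $S$ are minimal, the decompositions $T_\nu=\ker E_\nu^T\oplus\im F_\nu^T$ and $S_\nu=\ker E_\nu^S\oplus\im F_\nu^S$ hold, and $\phi_\nu$ respects them; hence $\phi_\nu$ itself is an isomorphism. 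The uniqueness in~(1) then falls out: if $N$ is an indecomposable minimal object satisfying (1a), part (3) decomposes $N\cong\bigoplus S_{\min}(\mu_i)$, indecomposability forces a single summand, and (1a) pins its label to $\lambda$. The main obstacle is the inductive step in~(3); without the identity $\ker F_\mu=\ker\widehat E_\mu$, the isomorphism $\im F_\nu^T\xrightarrow{\sim}\im F_\nu^S$ induced from $\phi_{\delta\nu}$ is not transparent, and this identity is what makes the strategy work.
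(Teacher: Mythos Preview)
Your proof is correct. The construction of $S_{\min}(\lambda)$ and the verification of (1a), (1b), (2) match the paper's argument (the paper is terser on the $\Hom$-vanishing, but your explicit computation via $\Hom_\CX(S_{\min}(\lambda),N)\cong\ker E_\lambda^N$ is exactly what is needed).

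Where you genuinely diverge is in part (3). The paper proceeds by peeling off direct summands: it picks a maximal weight $\lambda$ of $S$, observes that $S_{I_\lambda}\cong(S')^{\oplus n}$, and then invokes Lemma~\ref{lemma-minimalextmor} (extension of morphisms when source or target is minimal) to produce morphisms $f\colon S_{\min}(\lambda)^{\oplus n}\to S$ and $g\colon S\to S_{\min}(\lambda)^{\oplus n}$ with $(g\circ f)|_\lambda=\id$; property (2) then forces $g\circ f$ to be an automorphism, so $S_{\min}(\lambda)^{\oplus n}$ splits off and one recurses on a complement. Your approach instead builds the entire decomposition at once: you assemble all highest-weight vectors simultaneously into a single morphism $\phi\colon T\to S$ and prove it is an isomorphism weight-by-weight via downward induction, using the structural decomposition $M_\nu=\ker E_\nu\oplus\im F_\nu$ valid for minimal objects together with the identity $\ker F_\nu=\ker(E_\nu\circ F_\nu)$ (an immediate consequence of (X3a)). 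The paper's route is shorter because it recycles the extension machinery of Section~\ref{sec-extmor}; yours is more self-contained and yields an explicit isomorphism without appealing to Lemma~\ref{lemma-minimalextmor}. One small remark: the intertwining $\phi_{\delta\nu}\circ(E_\nu^T F_\nu^T)=(E_\nu^S F_\nu^S)\circ\phi_{\delta\nu}$ follows directly from $\phi$ being a morphism in $\CX$, so you do not actually need to invoke the formula for $\widehat E_\nu$ in terms of higher $E$- and $F$-maps.
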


Sometimes we will write $S_{\min,\SA}(\lambda)$ to incorporate the ground ring.

\begin{proof} We start with proving that there exists an object $S_{\min}(\lambda)$ satisfying the properties (1a) and (1b) as well as (2).  We then show that (3) holds with this particular set of objects $S_{\min}(\lambda)$, which then implies the remaining uniqueness statement in (1). So  set ${I_\lambda}:=\{\mu\in X\mid \lambda\le\mu\}$. This is a closed subset of $X$ that contains $\lambda$ as a minimal element. Define $S^\prime=\bigoplus_{\mu\in {I_\lambda}}S^\prime_\mu$ by setting $S^\prime_\lambda=\SA$ and $S^\prime_\mu=\{0\}$ for $\mu\in I_\lambda\setminus\{\lambda\}$.  For any $\mu\in I_\lambda$, $\alpha\in\Pi$, $n>0$ we have  $S^\prime_{\mu+n\alpha}=0$, and so all the maps $F_{\mu,\alpha,n}$ and $E_{\mu,\alpha,n}$ are the zero homomorphisms. Then $S^\prime$ is an object in $\CX_{I_\lambda}$. We set $S_{\min}(\lambda):=\mathsf{E}_{I_\lambda}^X(S^\prime)$. 

Proposition \ref{prop-minextfun}, (4) yields that  $S_{\min}(\lambda)_\mu=0$ unless $\mu\le\lambda$. As $S_{\min}(\lambda)_\lambda=S^\prime_\lambda=\SA$, $S_{\min}(\lambda)$ satisfies (1a). Proposition \ref{prop-minextfun} yields  $\End_{\CX}(S_{\min}(\lambda))=\End_{\CX_{I_\lambda}}(S^\prime)=\SA\cdot\id$, hence $S_{\min}(\lambda)$ is indecomposable, and property (2) holds. The same proposition also implies that $S_{\min}(\lambda)_{\{\mu\}}=S_{\min}(\lambda)_{\lgl\mu\rgl}$ for all $\mu\in X\setminus I_\lambda$. One checks directly that this identity is also satisfied for all $\mu\in I_\lambda$, hence $S_{\min}(\lambda)$ is minimal.

We are now left with proving  property (3), where we assume that the  $S_{\min}(\lambda)$ appearing in the statement are the objects we just constructed explicitely.  So let $S$ be a minimal  object, and fix a maximal weight $\lambda$ of $S$ (which exists by (X1)). Then $S_\lambda$ is a free $\SA$-module by assumption (X3c).  The maximality of $\lambda$ implies that the restriction $S_{I_\lambda}$ in $\CX_{I_\lambda}$ is isomorphic to a direct sum of copies of the object $S^\prime$ defined above.  Since  $S$ and  $\mathsf{E}_{I_\lambda}^X(\bigoplus S^\prime)$ are both minimal,   Lemma \ref{lemma-minimalextmor} implies  that the isomorphism $S_{I_\lambda}\cong \bigoplus S^\prime$  extends, so there are morphisms $f\colon  \mathsf{E}_{I_\lambda}^X (\bigoplus S^\prime)\to S$ and $g\colon  S\to \mathsf{E}_{I_\lambda}^X(\bigoplus S^\prime)$ such that $(g\circ f)|_{\lambda}$ is the identity. Now $\mathsf{E}_{I_\lambda}^X( \bigoplus S^\prime)\cong \bigoplus S_{\min}(\lambda)$. Using the already proven property (2) we deduce that  $g\circ f$ is an automorphism. Hence $ \bigoplus S_{\min}(\lambda)$ is isomorphic to a direct summand of $S$. By construction, $\lambda$ is not a weight of a direct complement. From here we can continue by induction to prove (3). 
\end{proof}

\subsection{Semisimplicity in the field case}
Now assume for a moment that $\SA=\SK$ is a field. 

\begin{lemma}\label{lemma-fieldcase} Any object in $\CX_\SK$ is minimal and maximal. Hence any object in $\CX_\SK$  is isomorphic to a direct sum of copies of  the objects $S(\lambda):=S_{\min}(\lambda)$ for various $\lambda$. In particular, $\CX_\SK$ is a semi-simple category.
\end{lemma}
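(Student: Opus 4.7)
My plan is to chain together Lemma \ref{lemma-X3}, Lemma \ref{lemma-minimal}, and Proposition \ref{prop-catminimal}, exploiting the key fact that over a field $\SK$ every $\SK$-module is torsion free.

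First I would observe that because $\SA=\SK$ is a field, Lemma \ref{lemma-X3} tells us that condition (X3) collapses to the requirement $M_\mu=\ker E_\mu\oplus\im F_\mu$ for all $\mu\in I$. By Lemma \ref{lemma-minimal} this in turn is equivalent to $M_{\{\mu\}}=M_{\lgl\mu\rgl}$ for every $\mu$, so every object of $\CX_\SK$ is automatically minimal.

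Next I would handle maximality. Over a field any non-zero element of an $\SK$-module is invertible under scalar action, so the only torsion $\SK$-module is the zero module. Applying this to the quotient $M_{\delta\mu}/M_{\{\mu\}}$, its torsion submodule is zero, which by the very definition of $M_{\{\mu\},\max}$ forces $M_{\{\mu\},\max}=M_{\{\mu\}}$. Combined with the minimality already established, we get the chain $M_{\{\mu\}}=M_{\lgl\mu\rgl}=M_{\{\mu\},\max}$ and hence maximality.

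The decomposition statement is then immediate from Proposition \ref{prop-catminimal}(3): any minimal object is isomorphic to $\bigoplus_{i\in J}S_{\min}(\lambda_i)$. Semi-simplicity of $\CX_\SK$ then follows from part (2) of that proposition, which gives $\End_{\CX_\SK}(S(\lambda))=\SK\cdot\id$ (a field, hence no non-trivial idempotents, so $S(\lambda)$ is indecomposable) and $\Hom_{\CX_\SK}(S(\lambda),S(\mu))=0$ for $\lambda\ne\mu$, so the $S(\lambda)$ behave as pairwise non-isomorphic simple objects with respect to direct-sum decompositions. There is essentially no obstacle here: the whole argument is a bookkeeping of the vanishing of torsion over a field, and the hardest step is just recognizing that over $\SK$ the condition $M_{\{\mu\},\max}=M_{\{\mu\}}$ is automatic.
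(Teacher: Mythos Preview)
Your proof is correct and follows essentially the same approach as the paper. The paper is slightly more direct for minimality: rather than routing through Lemma~\ref{lemma-X3} and Lemma~\ref{lemma-minimal}, it simply notes that axiom (X3b) already asserts $M_{\lgl\mu\rgl}/M_{\{\mu\}}$ is torsion, hence zero over a field; everything else (maximality via torsion vanishing, and the appeal to Proposition~\ref{prop-catminimal}) matches your argument.
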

\begin{proof}
As $\SK$ is a field, no torsion occurs, hence $M_{\{\mu\}}=M_{\lgl\mu\rgl}=M_{\{\mu\},\max}$, so $M$ is both minimal and maximal. The claims now follow all from Proposition \ref{prop-catminimal} and the fact that $\End_{\CX_\SK}(S_{\min}(\lambda))=\SK\cdot \id$. 
\end{proof}

\section{Representations of quantum groups}\label{sec-qg}

In this section we show that the category $\CX$ has an interpretation in terms of quantum group representations. The main reasons for this are the uniqueness of the indecomposable minimal objects $S_{\min}(\lambda)$ and the semisimplicity statement in Lemma \ref{lemma-fieldcase} in the field case. 

\subsection{Quantum groups over $\SZ$-algebras}
We denote by $U_\SZ$ the quantum group over $\SZ=\DZ[v,v^{-1}]$ (with divided powers) associated with the Cartan matrix $(\lgl\alpha,\beta^\vee\rgl)_{\alpha,\beta\in\Pi}$ of $R$. Its definition by generators and relations can be found  in \cite[Sections 1.1-1.3]{L90}. We denote by $e^{[n]}_\alpha,  f^{[n]}_\alpha, k_\alpha, k_\alpha^{-1}$ for $\alpha\in\Pi$ and $n>0$ the standard generators of $U_\SZ$. 

For  $\alpha\in R$,  $n>0$ also the element
$$ 
\qchoose{k_\alpha}{n}_{\alpha}:=\prod_{s=1}^n \frac{k_\alpha v_\alpha^{-s+1}-k_{\alpha}^{-1}v_\alpha^{s-1}}{v_\alpha^{s}-v_\alpha^{-s}} 
$$
is contained in $U_\SZ$ (where $v_\alpha:=v^{d_\alpha}$). We let $U_\SZ^+$, $U_\SZ^-$ and $U_\SZ^0$ be the unital subalgebras of $U_\SZ$ that are generated by the sets $\{e_\alpha^{[n]}\}$, $\{f_\alpha^{[n]}\}$ and $\{k_\alpha, k_\alpha^{-1},\qchoose{k_\alpha}{n}_{\alpha}\}$, resp. A remarkable fact, proven by Lusztig, is that each of these subalgebras is free over $\SZ$ and admits a PBW-type basis, and that the multiplication map $U^-_\SZ\otimes_\SZ U_\SZ^0\otimes_\SZ U_\SZ^+\to U_\SZ$ is an isomorphism of $\SZ$-modules (Theorem 6.7 in \cite{L90}). 

For a unital $\SZ$-algebra $\SA$  we set $U_\SA:=U_\SZ\otimes_\SZ\SA$ and $U_\SA^\ast:= U_\SZ^\ast\otimes_\SZ\SA$ for $\ast=-,0,+$.
In this article we consider $U_\SA$ only as an associative, unital algebra and forget about the Hopf algebra structure. For better readability we write $U$, $U^+$,\dots instead of $U_\SA$, $U_\SA^+$,\dots, once the $\SZ$-algebra $\SA$ is fixed.

\subsection{The category $\CO$} 
By \cite[Lemma 1.1]{APW} every $\mu\in X$ yields a character
\begin{align*}
\chi_\mu\colon U^0_\SZ&\to\SZ\\
k_\alpha^{\pm1}&\mapsto v_\alpha^{\pm \lgl\mu,\alpha^\vee\rgl}\\
\qchoose{k_\alpha}{r}_\alpha&\mapsto \qchoose{\lgl\mu,\alpha^\vee\rgl}{r}_{d_\alpha}\text{ ($\alpha\in\Pi$, $r\ge0$)}.
\end{align*}
We can extend the above character to a character $\chi_\mu\colon U^0_\SA\to\SA$. 
A $U_\SA$-module $M$ is called a {\em weight module} if $M=\bigoplus_{\mu\in X}M_\mu$, where
$$
M_\mu:=\{m\in M\mid H.m=\chi_\mu(H)m\text{ for all $H\in U_\SA^0$}\}.
$$
Hence all the weight modules that we consider in this article  are ``of type 1'' (cf.~\cite[Section 5.1]{JanQG}). An element $\mu\in X$ is called a {\em weight of $M$} if $M_\mu\ne\{0\}$. Now suppose that $\SA$ is also Noetherian. 

\begin{definition}\label{def-Q} Let $\CO=\CO_\SA$ be the full subcategory of the category of $U_\SA$-modules that contains all objects $M$ with the following properties.
\begin{enumerate}
\item $M$ is a weight module and its set of weights is quasi-bounded. 
\item For each $\mu\in X$, the weight space $M_\mu$ is a finitely generated torsion free $\SA$-module. 
\end{enumerate}
\end{definition}

\begin{remark} Note that the definition above yields an $\SA$-linear category, which in general is not abelian (due to the torsion freeness assumption). It is closed under taking subobjects (due to the fact that we assume that $\SA$ is Noetherian).   If $\SA=\SK$ is a field, then torsion freeness is always satisfied, and we obtain an abelian category.
\end{remark}

Now we establish a first, rather easy, link to the objects that we considered in the earlier chapters.
Let us denote by $\CX^{\pre}=\CX^{\pre}_\SA$ the category whose objects are  $X$-graded $\SA$-modules $M$ endowed with operators $E_{\alpha,n}$ and $F_{\alpha,n}$ as in Section \ref{sec-Xgradop} that satisfy conditions (X1) and (X2) (but not necessarily (X3)), and with morphisms being the $X$-graded $\SA$-linear homomorphisms that commute with the $E$- and $F$-maps.  

Let $M$ be an object in $\CO$. Let us denote  by $E_{\mu,\alpha,n}\colon M_\mu\to M_{\mu+n\alpha}$ and $F_{\mu,\alpha,n}\colon M_{\mu+n\alpha}\to M_\mu$ the homomorphisms given by the actions of $e_\alpha^{[n]}$ and $f_{\alpha}^{[n]}$, resp. By forgetting structure, we now consider $M$ only as an $X$-graded space endowed with these operators. 
\begin{lemma} \label{lemma-funS}  The above yields a  fully faithful functor
$$
\mathsf{S}\colon \CO\to\CX^{\pre}.
$$
\end{lemma}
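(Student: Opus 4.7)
The plan is to verify three things: (a) that $\mathsf{S}(M)$ actually satisfies (X1) and (X2) for each $M\in\CO$; (b) that $\mathsf{S}$ sends morphisms to morphisms; and (c) that the resulting functor is full and faithful.

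For (a), condition (X1) is immediate from the defining properties of $\CO$: the weights of $M$ form a quasi-bounded set and each weight space is finitely generated over $\SA$. To obtain (X2), I would invoke the standard commutation relations between divided powers of the Serre generators inside $U_\SZ$, which pass to $U_\SA$ by base change. The case $\alpha\neq\beta$ reduces to the fact that $e_\alpha^{[m]}$ and $f_\beta^{[n]}$ commute in $U$, and the case $\alpha=\beta$ reduces to Lusztig's rank-one identity
$$
e_\alpha^{[m]} f_\alpha^{[n]}=\sum_{r=0}^{\min(m,n)} f_\alpha^{[n-r]}\,\qchoose{k_\alpha;\,2r-m-n}{r}_{d_\alpha}\,e_\alpha^{[m-r]}.
$$
Applied to $v\in M_{\mu+n\alpha}$, the middle $U^0$-factor acts on the intermediate weight space $M_{\mu+(n+m-r)\alpha}$ via the character $\chi_{\mu+(n+m-r)\alpha}$; using $\lgl\alpha,\alpha^\vee\rgl=2$, the scalar obtained is precisely $\qchoose{\lgl\mu,\alpha^\vee\rgl+m+n}{r}_{d_\alpha}$, and the identity in (X2) falls out.

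For (b), any $U_\SA$-linear morphism $f\colon M\to N$ automatically preserves weight spaces, since these are defined via the $U^0_\SA$-characters $\chi_\mu$, and it commutes with the action of each $e_\alpha^{[n]}$ and $f_\alpha^{[n]}$ by definition; that is exactly the data of a morphism in $\CX^{\pre}$.

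For (c), faithfulness is immediate, since $\mathsf{S}(f)$ has the same underlying $\SA$-linear map as $f$. For fullness, given a morphism $g\colon\mathsf{S}(M)\to\mathsf{S}(N)$ in $\CX^{\pre}$, I need to promote $g$ to a $U_\SA$-linear map. By hypothesis $g$ commutes with all $e_\alpha^{[n]}$ and $f_\alpha^{[n]}$, and because $U^0_\SA$ acts on each weight space $M_\mu$ (resp.\ $N_\mu$) through the \emph{scalar} character $\chi_\mu$ and $g$ preserves weight spaces, commutation of $g$ with every element of $U^0_\SA$ is automatic. Since the listed elements generate $U_\SA$ as an $\SA$-algebra, $g$ is $U_\SA$-linear. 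The only non-routine step is (X2); the rest is bookkeeping. The point to get right there is how the weight shift inside Lusztig's identity combines with $\chi$ to produce precisely the quantum binomial appearing in (X2).
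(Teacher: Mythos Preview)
Your proposal is correct and follows essentially the same approach as the paper: verify (X1) from the definition of $\CO$, derive (X2) from Lusztig's divided-power commutation relation together with the scalar action of $\qchoose{k_\alpha;c}{r}_\alpha$ on weight spaces, and deduce full faithfulness from the fact that $U_\SA$ is generated over $U^0_\SA$ by the $e_\alpha^{[n]}$ and $f_\alpha^{[n]}$ while the $U^0_\SA$-action is already encoded by the $X$-grading. Your identification of the intermediate weight as $\mu+(n+m-r)\alpha$ and the resulting scalar $\qchoose{\lgl\mu,\alpha^\vee\rgl+m+n}{r}_{d_\alpha}$ is exactly the computation the paper carries out (with a harmless relabelling of $\mu$).
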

\begin{proof} It is clear that the above construction is functorial.  Let $M$ be an object in $\CO$. We need to check that the graded space with operators that we obtain from $M$ satisfies the conditions (X1) and (X2). Condition (X1) is part of the definition of $\CO$. Now we check condition (X2). The case $\alpha\ne\beta$ follows from the fact that $e_\alpha^{[m]}$ and $f_{\beta}^{[n]}$ commute. Now we treat the case $\alpha=\beta$. Set
$$
\qchoose{k_\alpha;c}{r}_\alpha =\prod_{s=1}^r\frac{k_\alpha v_\alpha^{c-s+1}-k_{\alpha}^{-1}v_\alpha^{-c+s-1}}{v_\alpha^{s}-v_\alpha^{-s}}.
$$
This element is contained in $U^0$ and acts as multiplication with 
\begin{align*}
\prod_{s=1}^r\frac{v_\alpha^{\lgl\nu,\alpha^\vee\rgl+c-s+1}-v_\alpha^{-\lgl\nu,\alpha^\vee\rgl-c+s-1}}{v_\alpha^{s}-v_\alpha^{-s}}.
\end{align*}
on each vector of weight $\nu$. By \cite[Section 6.5]{L90} the following relations holds in $U_\SZ$ for all  $m,n>0$: 
 $$
 e_{\alpha}^{[m]}f_\alpha^{[n]}=\sum_{r=0}^{\min(m,n)} f_\alpha^{[n-r]}\qchoose{k_\alpha;2r-m-n}{r}_\alpha e_\alpha^{[m-r]}.
 $$
For  $v\in M_\mu$ we hence obtain
\begin{align*}
 e_{\alpha}^{[m]}f_\alpha^{[n]}(v)&=\sum_{r=0}^{\min(m,n)} f_\alpha^{[n-r]}\prod_{s=1}^r\frac{v_\alpha^{\zeta-s+1}-v_\alpha^{-\zeta+s-1}}{v_\alpha^{s}-v_\alpha^{-s}}e_\alpha^{[m-r]}(v),
 \end{align*}
 where $\zeta=\lgl\mu+(m-r)\alpha,\alpha^\vee\rgl+2r-m-n=\lgl\mu,\alpha^\vee\rgl+m-n$. In order to prove that condition (X2) holds, it remains to show that 
 $$
 \qchoose{\zeta}{r}_{d_\alpha}=\prod_{s=1}^r\frac{v_\alpha^{\zeta-s+1}-v_\alpha^{-\zeta+s-1}}{v_\alpha^{s}-v_\alpha^{-s}},
 $$
 which is (almost) immediate from the definition. Hence $\mathsf{S}$ is indeed a functor from $\CO$ to $\CX^{\pre}$.
 
Now $U$ is generated by the elements $e_\alpha^{[n]}$, $f_\alpha^{[n]}$ for $\alpha\in\Pi$ and $n>0$ as an algebra over $U^0$ by the PBW-theorem.  As the actions of the $e^{[n]}$- and $f^{[n]}$-elements are encoded by the $E_n$- and $F_n$-homomorphisms, and as the action of $U^{0}$ is  encoded by the $X$-grading, the functor $\mathsf{S}$ is fully faithful. 
\end{proof}
Note that we can consider $\CX$ as a full subcategory of $\CX^{\pre}$. In the next section we construct a functor $\mathsf{R}\colon\CX\to\CO$ that is right inverse to $\mathsf{S}$.

\subsection{A functor from $\CX$ to  $\CO$} First we suppose that $\SA=\SK$ is a field. 
For each $\lambda$, the character $\chi_\lambda$ of $U^0_\SK$ can be uniquely extended to a character of  $U_\SK^0U_\SK^+$ such that $\chi_\lambda(e_\alpha^{[n]})=0$ for all $\alpha\in\Pi$, $n>0$. So we obtain an $U_\SK^0U_\SK^+$-module $\SK_\lambda$ of dimension $1$. We denote by $\Delta_\SK(\lambda):=U_\SK\otimes_{U_\SK^0U_\SK^+}\SK_\lambda$ the induced $U_\SK$-module (this is the {\em Verma module} with highest weight $\lambda$). It  has a unique irreducible quotient that  we denote by   $L_\SK(\lambda)$. Both are objects in $\CO_\SK$. For more information on these objects, see \cite{A}, or, in the case $q=1$, \cite{FracPerO}. Recall  that in the field case a minimal object is also maximal and we  set $S_\SK(\lambda):=S_{\min,\SK}(\lambda)$ (cf.~Lemma \ref{lemma-fieldcase}). 

\begin{proposition} \label{prop-irreps} Suppose that $\SA=\SK$ is a field. 
\begin{enumerate}
\item For all $\lambda\in X$ the object  $\mathsf{S}(L_\SK(\lambda))$ is contained in the subcategory $\CX_\SK$ of $\CX^{\pre}_\SK$ and it is isomorphic  to $S_\SK(\lambda)$. 
\item The functor $\mathsf{S}$ induces an equivalence between the full subcategory $\CO^{ss}_\SK$ of semi-simple objects in $\CO_\SK$ and the category $\CX_\SK$.
\end{enumerate}
\end{proposition}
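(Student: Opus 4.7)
The plan is to prove part (1) first and then deduce part (2) from Lemma \ref{lemma-fieldcase} together with the classification of simple objects in $\CO_\SK$.

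The core of part (1) is to verify that $M := \mathsf{S}(L_\SK(\lambda))$ actually satisfies condition (X3), i.e.\ lies in the subcategory $\CX_\SK \subset \CX^{\pre}_\SK$. By the last sentence of Lemma \ref{lemma-X3} this reduces to showing $M_\mu = \ker E_\mu \oplus \im F_\mu$ at every weight $\mu$. At $\mu=\lambda$ the weight space is one-dimensional, spanned by a highest weight vector, so $\ker E_\lambda = M_\lambda$ and $\im F_\lambda = 0$. For $\mu < \lambda$ I would argue $\ker E_\mu = 0$ by contradiction: if $v \in M_\mu$ is nonzero and killed by all $e_\alpha^{[n]}$ with $n>0$, then combined with the action of $U^0$ via the character $\chi_\mu$ and the PBW factorization $U = U^-U^0U^+$, one obtains $U\cdot v = U^-\cdot v$, a nonzero submodule of $L_\SK(\lambda)$ with all weights $\le\mu$; this contradicts the simplicity of $L_\SK(\lambda)$ together with $\mu<\lambda$. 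The same PBW argument yields $L_\SK(\lambda) = U^-\cdot v_\lambda$, so every vector in $M_\mu$ (for $\mu<\lambda$) can be written as a sum of expressions $f_\alpha^{[n]}\cdot w$ with $w \in M_{\mu+n\alpha}$ and therefore lies in $\im F_\mu$, giving $M_\mu = \im F_\mu = 0 \oplus \im F_\mu$.

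With (X3) established, the identification $M \cong S_\SK(\lambda)$ follows from the uniqueness clause in Proposition \ref{prop-catminimal}(1): by Lemma \ref{lemma-fieldcase} $M$ is automatically minimal, its $\lambda$-weight space is free of rank one, all other weights are $\le\lambda$, and $M$ is indecomposable because $L_\SK(\lambda)$ is simple and $\mathsf{S}$ is fully faithful, so $\End_{\CX}(M) = \End_{\CO}(L_\SK(\lambda)) = \SK$. Part (2) is then essentially formal: any simple object of $\CO_\SK$ has a highest weight (its weight set is quasi-bounded, so a maximal weight exists and any nonzero vector there is a highest weight vector) and is therefore isomorphic to the simple quotient $L_\SK(\lambda)$ of the corresponding Verma module. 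Hence $\CO^{ss}_\SK$ consists of direct sums of $L_\SK(\lambda)$'s, which $\mathsf{S}$ sends to direct sums of $S_\SK(\lambda)$'s; Lemma \ref{lemma-fieldcase} says these exhaust $\CX_\SK$ up to isomorphism, giving essential surjectivity, while full faithfulness is inherited from Lemma \ref{lemma-funS}.

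The main obstacle I anticipate is the $\ker E_\mu = 0$ step in part (1); it relies essentially on the PBW theorem for $U_\SK$ cited in Section 5.1 to ensure that annihilation of $v$ by all $e_\alpha^{[n]}$ really forces $U\cdot v \subset U^-\cdot v$. Everything else is bookkeeping or a direct appeal to results already assembled in the earlier sections.
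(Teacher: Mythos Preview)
Your proposal is correct and follows essentially the same approach as the paper's proof: verify $M_\mu=\ker E_\mu\oplus\im F_\mu$ via the absence of primitive vectors below $\lambda$ and the $U^-$-cyclicity of $L_\SK(\lambda)$, then identify $M$ with $S_\SK(\lambda)$ using indecomposability (from full faithfulness of $\mathsf{S}$) and the classification of minimal objects; part (2) is then formal from Lemma~\ref{lemma-fieldcase}. The only cosmetic difference is that the paper invokes the decomposition statement of Lemma~\ref{lemma-fieldcase} and then uses indecomposability to isolate a single summand, whereas you appeal directly to the uniqueness clause of Proposition~\ref{prop-catminimal}(1)---these are equivalent moves.
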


\begin{proof} We prove part (1). In view of Lemma \ref{lemma-funS} we need to check property (X3) in order to prove the first statement. Set $M=\mathsf{S}(L_\SK(\lambda))$. As $L_\SK(\lambda)$ is an irreducible $U_\SK$-module of highest weight $\lambda$,  it is cyclic as a $U^-$-module, hence we have $\im F_\mu=M_\mu$ for all $\mu\ne \lambda$, and $\im F_\lambda=\{0\}$. On the other hand, there are no non-trivial primitive vectors in $L_\SK(\lambda)$ of weight $\mu$ if $\mu\ne\lambda$. (A primitive vector is a vector annihilated by all $e_{\alpha}^{[n]}$.) So we have  $\ker E_\mu=\{0\}$ for $\mu\ne\lambda$ and $\ker E_\lambda=M_\lambda$. In any case we have $M_\mu=\ker E_\mu\oplus \im F_\mu$, which, by Lemma \ref{lemma-X3}, is equivalent to the set of conditions (X3). Hence $M$ is an object in $\CX_\SK$. Lemma \ref{lemma-fieldcase} now yields that $M$ is isomorphic to a direct sum of various $S_{\SK}(\mu)$'s. As $\mathsf{S}$ is fully faithful, $M$ is indecomposable, and a comparison of weights shows $M\cong S_{\SK}(\lambda)$. 

Now (2) follows from the fact that $\CX_\SK$ is a semi-simple category with simple objects $S_\SK(\lambda)$ by Lemma \ref{lemma-fieldcase}  and the fact that $\mathsf{S}$ is fully faithful.
\end{proof}

Here is our ``realization theorem''. 
\begin{theorem}\label{thm-XU} Let $\SA$ be a unital Noetherian domain that is a $\SZ$-algebra. Let $M$ be an object in $\CX=\CX_\SA$. Then there exists a unique $U$-module structure  on $M$ such that the following holds.
\begin{itemize}
\item The $X$-grading $M=\bigoplus_{\mu\in X}M_\mu$ is the weight decomposition.
\item  For all $\mu\in X$, $\alpha\in\Pi$, $n>0$, the homomorphisms $E_{\mu,\alpha,n}$ and $F_{\mu,\alpha,n}$ are the action maps of $e_{\alpha}^{[n]}$ on $M_\mu$ and $f_{\alpha}^{[n]}$ on $M_{\mu+n\alpha}$, resp.
\end{itemize}
 From this we obtain a fully faithful functor $\mathsf{R}\colon\CX\to \CO$, and we have $\mathsf{S}\circ\mathsf{R}\cong\id_{\CX}$. 
\end{theorem}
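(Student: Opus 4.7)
The plan is to define the candidate action, then verify that all defining relations of $U_\SA$ hold by reducing to the field case via torsion-freeness and flat base change.

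Define an action of each standard generator of $U_\SA$ on $M$ as follows. On $M_\mu$, each element $H \in U_\SA^0$ acts by the scalar $\chi_\mu(H)$; this is forced by the first bullet and the definition of $\chi_\mu$. The divided powers $e_\alpha^{[n]}$ and $f_\alpha^{[n]}$ act on $M_\mu$ and $M_{\mu+n\alpha}$ via the maps $E_{\mu,\alpha,n}$ and $F_{\mu,\alpha,n}$, respectively; this is forced by the second bullet. These choices are the unique possible ones, so uniqueness is automatic. The functor $\mathsf{R}$, the identity on graded modules and morphisms, is then forced; functoriality is immediate because morphisms in $\CX$ are by definition $\SA$-linear, $X$-graded, and commute with the $E$- and $F$-maps, which is exactly the condition that they be $U_\SA$-linear once one knows the action is well defined.

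The only substantive content is therefore that the defining relations of $U_\SA$ are satisfied. The $U^0$-weight relations with the $e$'s and $f$'s are built into the $X$-grading; the mixed $e_\alpha^{[m]} f_\beta^{[n]}$-relations are exactly axiom (X2), as verified in Lemma \ref{lemma-funS}. What remains are the divided power relations inside $U^+$ and $U^-$ and the quantum Serre relations. To establish these, I will use base change to the quotient field $\SK$ of $\SA$. By Lemma \ref{lemma-torfree}, $M$ is a torsion free $\SA$-module, so the canonical map $M \to M_\SK := M \otimes_\SA \SK$ is injective. By Lemma \ref{lemma-basechange}, $M_\SK$ is an object of $\CX_\SK$, and the induced action maps $E_{\mu,\alpha,n}^{M_\SK}$ and $F_{\mu,\alpha,n}^{M_\SK}$ are compatible with the corresponding maps on $M$.

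By Lemma \ref{lemma-fieldcase}, $M_\SK$ decomposes as a direct sum of objects $S_\SK(\lambda_i)$, and by Proposition \ref{prop-irreps}(1), each summand is isomorphic in $\CX_\SK$ to $\mathsf{S}(L_\SK(\lambda_i))$. Since $\mathsf{S}$ is fully faithful and in particular injective on data, this isomorphism matches the $E$- and $F$-maps of $S_\SK(\lambda_i)$ with the action maps of $e_\alpha^{[n]}$ and $f_\alpha^{[n]}$ on $L_\SK(\lambda_i)$. Consequently the prescribed action of $U_\SK$ on $M_\SK$ is a genuine $U_\SK$-module structure — in particular all relations of $U_\SK$ hold on $M_\SK$. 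Since $M \hookrightarrow M_\SK$ and each generator preserves the image of $M$ (the $E$- and $F$-maps and the $U^0$-scalars all land in $M$ by construction), the corresponding relations hold on $M$ as well, so $M$ is a $U_\SA$-module with the required properties.

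Finally I must verify that $\mathsf{R}(M)$ lies in $\CO_\SA$ and that $\mathsf{R}$ is fully faithful with $\mathsf{S} \circ \mathsf{R} \cong \id_\CX$. The conditions defining $\CO$ in Definition \ref{def-Q} (weight module, quasi-bounded weight set, finitely generated torsion free weight spaces) are immediate from (X1) together with Lemma \ref{lemma-torfree}. A morphism in $\CO$ between $\mathsf{R}(M)$ and $\mathsf{R}(N)$ is an $X$-graded $\SA$-linear map commuting with the action of every $e_\alpha^{[n]}$ and $f_\alpha^{[n]}$, which is precisely a morphism in $\CX$; hence $\mathsf{R}$ is fully faithful. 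The identity $\mathsf{S} \circ \mathsf{R} = \id_\CX$ is then a tautology: $\mathsf{S}$ merely records the $X$-grading together with the action maps of the divided powers, which are by definition the original $E$- and $F$-maps of $M$. The main obstacle throughout is the passage from field coefficients back to $\SA$; this is handled cleanly by the torsion-freeness supplied by (X3) via Lemma \ref{lemma-torfree}.
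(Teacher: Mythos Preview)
Your proof is correct and follows essentially the same strategy as the paper: first establish the $U_\SK$-module structure on $M_\SK$ via the semisimplicity of $\CX_\SK$ (Lemma \ref{lemma-fieldcase}) together with Proposition \ref{prop-irreps}, then use torsion-freeness (Lemma \ref{lemma-torfree}) to view $M\subset M_\SK$ as a $U_\SA$-stable subspace. The only cosmetic difference is that the paper phrases the general step as ``$M$ is stable under $U_\SA\subset U_\SK$'' rather than as ``the relations of $U_\SA$ hold on $M$'', but these are equivalent formulations of the same argument.
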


\begin{proof} Note that the uniqueness statement in the claim above follows immediately from the fact that $U$ is generated as an algebra by the elements $e_\alpha^{[n]}$, $f_\alpha^{[n]}$ and $k_\alpha$, $k_\alpha^{-1}$ for $\alpha\in\Pi$ and $n>0$. 
We now prove the existence of a $U$-module structure on $M$ with the alleged properties. 

First suppose that $\SA=\SK$ is a field. Then every object in $\CX_\SK$ is isomorphic to a direct sum of various $S_{\SK}(\lambda)$'s by Lemma \ref{lemma-fieldcase}. By  Proposition \ref{prop-irreps} we have $\mathsf S(L_\SK(\lambda))\cong S_{\SK}(\lambda)$. Hence  any $S_{\SK}(\lambda)$ carries the structure of an  $U_\SK$-module of the required kind  (making it isomorphic to $L_\SK(\lambda)$). So the result holds in the case that $\SA$ is a field.

Now let $\SA$ be a unital Noetherian domain. We denote by $\SK$ its quotient field. For any object $M$ in $\CX=\CX_\SA$, $M_\SK=M\otimes_\SA\SK$ is an object in $\CX_\SK$ by Lemma \ref{lemma-basechange}. As $M$ is a torsion free $\SA$-module by Lemma \ref{lemma-X3}, we can view $M$ as an $\SA$-submodule in $M_\SK$. Now by the above, we can view $M_\SK$ as an object in $\CO_\SK$. As $M$ is stable under the maps $E_{\alpha,n}$ and $F_{\alpha,n}$, it is stable under the action of $e_{\alpha}^{[n]}$ and $f_\alpha^{[n]}$. Moreover, it is clearly stable under the action of $k_\alpha$ and $k_\alpha^{-1}$. Hence it is stable under the action of  $U_\SA\subset U_\SK$. So there is indeed a natural $U_\SA$-module structure on $M$, and one immediately checks that this makes it into an object of category $\CO_\SA$.

Clearly the above $U_\SA$-structure depends functorially on $M$, so  we indeed obtain a functor $\mathsf{R}$ from $\CX_\SA$ to $\CO_\SA$. It is clearly fully faithful and obviously  $\mathsf{S}\circ\mathsf{R}$ is isomorphic to the identity on $\CX_\SA$. 
\end{proof}

\section{Objects admitting a Weyl filtration}\label{sec-Weyl}
 The main goal of this section is to show that the functors $\mathsf{S}$ and $\mathsf{R}$ induce mutually inverse equivalences between the category $\CX^{\fin}$ of objects in $\CX$ that are free of finite rank over $\SA$, and the category $\CO^W$ of objects in $\CO$ that admit a (finite) Weyl filtration. 
 
We need to assume that the quotient field of our ground ring $\SA$ is {\em generic}, i.e.~of  {\em quantum characteristic $0$} (see Definition \ref{def-generic}). The finite dimensional representation theory of $U_\SK$ is semi-simple in this case. For the most part of this section we do not need $\SA$ to be local. 

\subsection{Generic algebras}\label{subsec-groundrings}
Let $\SA$ be a unital $\SZ$-algebra that is a Noetherian domain.  

\begin{definition} \label{def-generic} We say that $\SA$ is {\em generic} (or of {\em quantum characteristic $0$}) if for all  $n\ne 0$ and all $d>0$ the image of the quantum integer $[n]_{d}$ in $\SA$ is non-zero (i.e.~invertible in the quotient field $\SK$).
\end{definition}
Recall that we denote by $q\in\SA$ the image of $v$ under the structural homomorphism $\SZ\to\SA$, $f\mapsto f\cdot 1_\SA$. Note that if $\SA$ is not generic, then $q$ is a root of unity in $\SA$,  as 
\begin{align*}
[n]_d&=\frac{v^{dn}-v^{-dn}}{v^d-v^{-d}}
=v^{-dn+d}\frac{v^{2dn}-1}{v^{2d}-1}. 
\end{align*}
The converse is not true. For example, a field of characteristic $0$ with $q=1$ is generic. However,  a field of positive characteristic and $q=1$ is not generic, but the ring of $p$-adic integers $\DZ_p$ with $q=1$ is generic. If $\zeta\in \DC$ is a root of unity of order $>2$, then $\SA=\DQ(\zeta)$ with $q=\zeta$ is not generic. 

Let $p$ be a prime number and denote by $\SZ_\fp$ the localization of $\SZ$ at the prime ideal 
\begin{align*}
\fp&:=\{g\in\SZ\mid \text{ $g(1)$ is divisible by $p$}\}\\
&=\ker\left( \SZ\xrightarrow{v\mapsto 1} \DF_p\right),
\end{align*}
i.e.~$\SZ_\fp=\{\frac{f}{g}\in\Quot\SZ\mid \text{ $g(1)$ is not divisible by $p$}\}$. So $\SZ_\fp$ with $q=v$ is a  local and generic $\SZ$-algebra, and its residue field is $\DF_p$ (with $q=1$). Similarly, denote by $\sigma_l\in\DQ[v]$ the $l$-th cyclotomic polynomial, and let $\DQ[v]_{(\sigma_l)}$ be the localization at the prime ideal generated by $\sigma_l$. We obtain a  local and generic $\SZ$-algebra with $q=v$, with residue field $\DQ[\zeta_l]$, the $l$-th cyclotomic field (with $q=\zeta_l$).

The assumption that $\SA$ is generic is not a decisive restriction even if one is interested in the  theory of tilting modules for  algebraic groups in positive characteristics or for  quantum groups at roots of unity. As we will show in this article, these objects admit deformations to the generic and local algebras $\SZ_\fp$ and $\DQ[v]_{(\sigma_l)}$ and  one obtains information in the non-generic cases by specializing results from generic cases.

\subsection{Modules admitting a Weyl filtration}
 Let us now assume that $\SA$   is  generic. We denote by  $\SK$ its quotient field. Recall that for any $\lambda\in X$ we denote by $L_\SK(\lambda)$ the irreducible object in $\CO_\SK$  with highest weight $\lambda$. Then $L_\SK(\lambda)$ is finite dimensional if and only if $\lambda$ is dominant, i.e.~is contained in the set $X^+=\{\lambda\in X\mid \lgl\lambda,\alpha^\vee\rgl\ge 0\text{ for all $\alpha\in\Pi$}\}$ of dominant weights, cf.~\cite[Theorem 2.3]{A}. If $\lambda$ is dominant, then we define $W(\lambda)=W_\SA(\lambda)$ as the $X$-graded $U_\SA$-submodule in $L_\SK(\lambda)$ generated by a non-zero element in $L_\SK(\lambda)_\lambda$. This is an object in $\CO_\SA$ and it does not depend, up to  isomorphism, on the choice of the element.  It is called the {\em Weyl module} with highest weight $\lambda$. 
\begin{proposition}\label{prop-Weylchar}  Let $\SA$ be local and generic, and let $\lambda\in X$ be dominant. Then $W(\lambda)$ is a free $\SA$-module of finite rank and its character is given by Weyl's character formula.
\end{proposition}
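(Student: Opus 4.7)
The strategy is to reduce freeness to a base-change statement from a universal integral form, and to read the character off from the generic field case.

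First, since $\SA$ is generic so is its quotient field $\SK$; it then follows from Andersen's theorem (\cite[Theorem 2.3]{A}), or equivalently from Lemma \ref{lemma-fieldcase} combined with Proposition \ref{prop-irreps}, that $L_\SK(\lambda)$ is finite-dimensional with character given by Weyl's formula. Because $v_\lambda$ generates $L_\SK(\lambda)$ over $U_\SK$, the inclusion $W(\lambda) = U_\SA \cdot v_\lambda \hookrightarrow L_\SK(\lambda)$ becomes an isomorphism after $\otimes_\SA \SK$, so each torsion-free weight space $W(\lambda)_\mu$ has $\SA$-rank equal to the Weyl coefficient $\dim_\SK L_\SK(\lambda)_\mu$. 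This already yields the character formula, modulo the freeness statement.

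For freeness, I would invoke the Andersen-Polo-Wen theorem: the universal Weyl module $W_\SZ(\lambda) := U_\SZ \cdot v_\lambda \subset L_{\DQ(v)}(\lambda)$ is a free $\SZ$-module, with a distinguished integral basis coming for instance from Lusztig's canonical basis. The natural $U_\SA$-linear map $W_\SZ(\lambda) \otimes_\SZ \SA \to W(\lambda)$ is surjective by construction of $W(\lambda)$, and is injective because both sides embed into $L_\SK(\lambda)$ and already coincide after passing to $\DQ(v)$. Hence $W(\lambda) \cong W_\SZ(\lambda) \otimes_\SZ \SA$ is $\SA$-free of finite rank, and of total rank equal to $\dim_{\DQ(v)} L_{\DQ(v)}(\lambda) < \infty$.

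The main obstacle is the $\SZ$-freeness of $W_\SZ(\lambda)$, which is a genuinely nontrivial integrality statement (since $\SZ = \DZ[v,v^{-1}]$ is not a principal ideal domain). Classically this is handled by exhibiting a canonical or PBW-type integral basis whose cardinality in each weight space matches the Weyl coefficient. A self-contained proof within the framework of this paper would instead identify $W(\lambda)$ with $\mathsf{R}(S_{\min,\SA}(\lambda))$, using Proposition \ref{prop-catminimal} together with the realization functor from Theorem \ref{thm-XU}, and then proceed by induction on $\lambda - \mu$ to show that in the minimal-extension construction of $S_{\min,\SA}(\lambda)$ the kernel of $\hat E_\mu$ is always a direct summand of the free $\SA$-module $\hat M_\mu$; the inductive step would rely on the dominance of $\lambda$ to guarantee that the relevant quantum integers appearing in the matrix of $\hat E_\mu$ remain non-vanishing in the generic local ring $\SA$.
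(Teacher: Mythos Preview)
Your primary argument --- base-changing from a universal $\SZ$-form --- is valid in outline but is \emph{not} what the paper does, and your alternative via $S_{\min}$ is too vague to stand on its own.

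\textbf{What the paper does.} The paper never invokes $\SZ$-freeness of a universal Weyl module. Instead it uses a Nakayama-type argument directly over the local ring $\SA$: since $W(\lambda)_\mu$ is finitely generated over the local ring $\SA$, it is free as soon as $\dim_\SF(W(\lambda)_\mu\otimes_\SA\SF)\le \dim_\SK(W(\lambda)_\mu\otimes_\SA\SK)$, where $\SF$ is the residue field. The $\SK$-side is the Weyl coefficient, as you observed. For the $\SF$-bound, the paper notes that $W(\lambda)\otimes_\SA\SF$ is a finite-dimensional $U_\SF$-module of highest weight $\lambda$, hence a quotient of Ryom-Hansen's induced module $D_\SF(\lambda)$, whose character is again given by Weyl's formula (\cite{RH}). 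This gives the required inequality. The advantage of this route is that it needs nothing about $\SZ$ itself: it is purely a comparison of the two fibres of $\SA$, and the hard representation-theoretic input is the Weyl-character bound at the residue field, which is available from \cite{RH} even when $\SF$ is non-generic.

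\textbf{Your base-change argument.} The strategy is sound once $W_\SZ(\lambda)$ is known to be $\SZ$-free, but your justification of injectivity is not quite right. You cannot ``pass to $\DQ(v)$'' from $\SA$: a general generic $\SA$ need not be a $\DQ(v)$-algebra (e.g.\ $\SA=\SZ_\fp$). The correct argument is to tensor the surjection $W_\SZ(\lambda)\otimes_\SZ\SA\twoheadrightarrow W(\lambda)$ with $\SK$; both sides then have the same finite $\SK$-dimension (the Weyl number), so the map is an isomorphism over $\SK$, and the kernel --- a submodule of the free $\SA$-module $W_\SZ(\lambda)\otimes_\SZ\SA$ --- is torsion, hence zero. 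Your approach trades the residue-field bound for the (equally nontrivial) integral freeness statement over $\SZ$; both are legitimate, but the paper's route is more in keeping with its local-ring viewpoint and avoids importing canonical-basis technology.

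\textbf{Your alternative via $S_{\min}$.} This is not a proof as written. The claim that dominance of $\lambda$ makes ``the relevant quantum integers in the matrix of $\hE_\mu$ non-vanishing'' is far too optimistic: genericity of $\SA$ only says $[n]_d\ne 0$, not that $[n]_d$ is a unit, and showing that $\ker\hE_\mu$ is a direct summand of $\hM_\mu$ is essentially equivalent to the freeness statement you are trying to prove. Moreover, within the paper's logical order the identification $\mathsf{S}(W(\lambda))\cong S_{\min}(\lambda)$ (Proposition~\ref{prop-equivcat}) is established \emph{after} and \emph{using} Proposition~\ref{prop-Weylchar}, so invoking it here would be circular.
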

\begin{proof} The above statement is proven for a special choice of $\SA$ in \cite[Proposition 1.22]{APW}. We claim that their proof carries over to the more general case above. As $\SA$ is generic, the character of $L_\SK(\lambda)=W_\SA(\lambda)\otimes_\SA\SK$ is given by Weyl's character formula. We denote by $\SF$ the residue field of $\SA$. As in loc.cit. it is now sufficient to show that for all weights $\mu$ the $\SF$-dimension  of  $W_\SA(\lambda)_\mu\otimes_\SA\SF$ is at most the $\SK$-dimension of $W_\SA(\lambda)_\mu\otimes_\SA\SK$. But the $U_\SF$-module $W_\SA(\lambda)\otimes_\SA\SF$ is finite dimensional and of highest weight $\lambda$. Hence (Remark 4.2 in \cite{RH}) it is a quotient of the $U_\SF$-module $D_\SF(\lambda)$ that is obtained from the $U_\SF^+$-module $\SF_\lambda$ by Joseph's induction functor. But the character of $D_\SF(\lambda)$ is given by Weyl's character formula as well (see \cite[Theorem 5.4, Corollary 5.8]{RH}), hence the character of $W_\SA(\lambda)\otimes_\SA\SF$ is bounded from above by Weyl's character formula. 
\end{proof}

\begin{definition} We say that $M$ {\em admits a Weyl filtration} if there is a finite filtration $0=M_0\subset M_1\subset \dots\subset M_n=M$ and $\lambda_1,\dots,\lambda_
n\in X^+$ such that for each $i=1,\dots,n$, the subquotient $M_i/M_{i-1}$ is isomorphic to $W(\lambda_i)$. 

\end{definition}
We denote by $\CO^W=\CO_\SA^W$ the full subcategory of $\CO$ that contains all objects that admit a Weyl filtration. Note that if $\SA=\SK$ is a generic  field, then  $\CO^W_\SK$ is a semi-simple category (cf.~Theorem 5.15 and Section 6.26 in \cite{JanQG}).

\subsection{A criterium for Weyl filtrations} Suppose that $\SA$ is  local and generic. 
 Let $M$ be a module for $U=U_\SA$.

\begin{lemma}\label{lemma-freehws}  Suppose that  $M$ is finitely generated as an $\SA$-module and  that there exists a dominant element $\lambda\in X$ such that the following holds.
\begin{enumerate}
\item The weight space $M_\lambda$ generates  $M$ as a $U^-$-module.
\item The weight space $M_\lambda$ is a free $\SA$-module of finite rank $r$. 
\end{enumerate}
Then $M$ is isomorphic to a direct sum of $r$ copies of $W(\lambda)$. 
\end{lemma}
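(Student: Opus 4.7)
The plan is to construct a $U_\SA$-linear map $\phi\colon W(\lambda)^{\oplus r}\to M$ that sends the $r$ canonical highest weight generators of the summands to a prescribed basis of $M_\lambda$, and then to show that $\phi$ is an isomorphism.

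I would begin by base changing to the quotient field $\SK$ of $\SA$. Because $M$ is finitely generated over $\SA$, the module $M_\SK:=M\otimes_\SA\SK$ is finite dimensional over $\SK$, and the genericity of $\SK$ ensures that the category of such $U_\SK$-modules is semisimple (cf.~Theorem~5.15 and Section~6.26 in \cite{JanQG}). The generation hypothesis passes to $M_\SK$, forcing every weight of $M_\SK$ to satisfy $\mu\le\lambda$. Combined with the vanishing of $L_\SK(\mu)_\lambda$ for dominant $\mu<\lambda$, the only simples that can appear are copies of $L_\SK(\lambda)$, and comparison of $\lambda$-weight space dimensions gives $M_\SK\cong L_\SK(\lambda)^{\oplus r}=W_\SK(\lambda)^{\oplus r}$.

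Next, I would construct $\phi$. Fix a free $\SA$-basis $v_1,\dots,v_r$ of $M_\lambda$. Each $v_i$ is primitive because $\lambda$ is maximal among the weights of $M$. In the semisimple $M_\SK\cong L_\SK(\lambda)^{\oplus r}$, a cyclic $U_\SK$-submodule generated by a primitive weight-$\lambda$ vector is necessarily isomorphic to the simple $L_\SK(\lambda)$ (semisimplicity plus cyclicity force multiplicity one), so there is a $U_\SK$-linear embedding $\psi_i\colon L_\SK(\lambda)\hookrightarrow M_\SK$ with $\psi_i(v)=v_i$, where $v\in L_\SK(\lambda)_\lambda$ is the distinguished generator used to define $W(\lambda)$. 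Restricting $\psi_i$ to the $U_\SA$-submodule $W(\lambda)=U_\SA\cdot v\subset L_\SK(\lambda)$ yields a $U_\SA$-linear map $\phi_i\colon W(\lambda)\to M_\SK$ whose image is $U_\SA\cdot v_i$; this lies inside $M$ as soon as $M$ is torsion-free over $\SA$ (which is the standing case; in general one first works with $M/M_\mathrm{tors}$ and afterwards shows $M_\mathrm{tors}=0$ using the generation hypothesis). Summing over $i$ produces $\phi\colon W(\lambda)^{\oplus r}\to M$.

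To conclude, $\phi$ is surjective because its image contains the $v_i$ and $M=\sum_i U^-v_i$ by hypothesis. For injectivity, Proposition~\ref{prop-Weylchar} ensures that $W(\lambda)^{\oplus r}$ is free over $\SA$, so $\ker\phi$ is torsion-free, while $\phi_\SK\colon W_\SK(\lambda)^{\oplus r}\to M_\SK$ is a surjection between $\SK$-vector spaces of the same finite dimension $r\cdot\dim_\SK L_\SK(\lambda)$, hence an isomorphism; this forces $\ker\phi\otimes_\SA\SK=0$, which together with torsion-freeness yields $\ker\phi=0$. The delicate step is constructing the individual $\phi_i$ at the integral level: one extracts them from the ambient semisimple $U_\SK$-module structure on $M_\SK$ and restricts to the $U_\SA$-submodule $W(\lambda)\subset L_\SK(\lambda)$, and this is where any subtlety about $\SA$-torsion enters the argument.
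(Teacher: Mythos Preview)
Your construction of $\phi$ and the verification of injectivity and surjectivity are fine \emph{once you know $M$ is torsion free}, and that part matches the paper's concluding step. The gap is in your parenthetical handling of torsion. The module $M$ in this lemma is \emph{not} assumed to lie in $\CO$ (the sentence just before the lemma reads ``Let $M$ be a module for $U=U_\SA$''), and the lemma is applied in the proof of Proposition~\ref{prop-charWf} to a quotient $M/M[\lambda]$ that is not a priori torsion free. So torsion freeness is genuinely part of what must be proved.

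Your reduction ``work with $M/M_{\mathrm{tors}}$ and afterwards show $M_{\mathrm{tors}}=0$ using the generation hypothesis'' does not go through as stated. The maps $\phi_i$ you build via $M_\SK$ land only in the image of $M\to M_\SK$, i.e.\ in $M/M_{\mathrm{tors}}$, not in $M$; to get a $U_\SA$-map $W(\lambda)\to M$ sending $v\mapsto v_i$ you would need $\mathrm{Ann}_{U_\SA}(v)\cdot v_i=0$ in $M$, whereas your argument only gives $\mathrm{Ann}_{U_\SA}(v)\cdot v_i\subset M_{\mathrm{tors}}$. Equivalently, writing $\Delta(\lambda)^{\oplus r}\twoheadrightarrow M$ with kernel $K$ and $\Delta(\lambda)^{\oplus r}\twoheadrightarrow W(\lambda)^{\oplus r}$ with kernel $K'=J^{\oplus r}$, you have $K\subset K'$ and $K'/K\cong M_{\mathrm{tors}}$, but nothing in ``$M=U^-M_\lambda$'' alone forces $K=K'$. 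The paper closes this gap differently: it bounds $\dim_\SF(M\otimes_\SA\SF)_\mu$ from above by showing $M_\SF$ is a quotient of $D_\SF(\lambda)^{\oplus r}$ (whose character is Weyl's, via \cite{RH}), and combines this with $\dim_\SK(M_\SK)_\mu=r\cdot(\text{Weyl})_\mu$ and the general inequality $\dim_\SF\ge\dim_\SK$ for finitely generated modules over a local ring to conclude that every $M_\mu$ is free. That residue-field comparison is the missing idea.
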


\begin{proof} First let us show that $M$ is free over $\SA$. Again we denote by $\SK$ and $\SF$ the quotient field and the residue field of $\SA$, resp. Consider the $U_\SK$-module $M_\SK$. It is of finite $\SK$-dimension, and generated by its $\lambda$-weight space, which is of $\SK$-dimension $r$. As $\SA$ is generic, $M_\SK$ is isomorphic to a direct sum of $r$ copies of $L_\SK(\lambda)$. Now consider the $U_\SF$-module $M_\SF$. Again, this is of finite $\SF$-dimension and generated by its $\lambda$-weight space, which is of $\SF$-dimension $r$. As in the proof of Proposition \ref{prop-Weylchar} we deduce  that $M_\SF$ is a quotient of a direct sum of $r$ copies of $D_\SF(\lambda)$. As the characters of $L_\SK(\lambda)$ and $D_\SF(\lambda)$ are both given by Weyl's character formula, we deduce, again as in the proof of Proposition \ref{prop-Weylchar}, that each weight space of $M$ is free over $\SA$ (and that its character is given by $r$ times Weyl's character formula). 

Hence $M$ embeds into $M_\SK\cong L_\SK(\lambda)^{\bigoplus r}$. As $M$ is generated by $M_\lambda$, which is an $\SA$-lattice in $(M_\SK)_\lambda$ of rank $r$, $M$ is isomorphic to a direct sum of $r$ copies of the Weyl module $W_\SA(\lambda)$. 
\end{proof}

Let $M$ be an object in $\CO$. 
For $\lambda\in X$ define $M[\lambda]\subset M$ as the $U$-submodule of $M$ that is generated by all weight spaces $M_\mu$ such that $\mu\not\le\lambda$. Then  $M/M[\lambda]$ is the largest quotient of $M$ that has the property that all of its weights are smaller or equal to $\lambda$. Set 
$M_{[\lambda]}=(M/M[\lambda])_\lambda$. Clearly, if $N\subset M$ is a submodule, then $N[\lambda]$ is a submodule of $M[\lambda]$. Also, $M[\lambda]_\mu=M_\mu$ for all $\mu\not\le\lambda$. 
\begin{lemma}\label{lemma-ws} For $\nu\not\le\lambda$ the inclusion $M[\lambda][\nu]\subset M[\nu]$ is an isomorphism on the $\nu$-weight space.
\end{lemma}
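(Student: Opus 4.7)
The inclusion $M[\lambda][\nu] \subseteq M[\nu]$ is automatic: by construction $M[\lambda][\nu]$ is the $U$-submodule of $M[\lambda]$ generated by those weight spaces $(M[\lambda])_\mu = M[\lambda] \cap M_\mu \subseteq M_\mu$ with $\mu \not\le \nu$, and these same $M_\mu$'s belong to the generating set for $M[\nu]$. So only the opposite inclusion on the $\nu$-weight space needs to be established, i.e.\ $(M[\nu])_\nu \subseteq (M[\lambda][\nu])_\nu$.

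The plan is to use the triangular decomposition $U = U^- U^0 U^+$ to reduce any generator of $(M[\nu])_\nu$ to a ``$U^-$-action on a strictly higher weight vector'' form. By definition any $x \in (M[\nu])_\nu$ is a finite sum of elements $u \cdot y$ with $y \in M_\mu$, $\mu \not\le \nu$, and $u \in U$ of weight $\nu - \mu$. Writing $u$ as a sum of monomials $u^- u^0 u^+$ with $u^\pm \in U^\pm_{\pm\gamma^\pm}$ and absorbing $u^0 u^+$ into the vector, we can replace $y$ by $y' := u^0 u^+ y \in M_{\mu'}$ where $\mu' = \mu + \gamma^+ \ge \mu$. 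Then $u^-$ has weight $\nu - \mu' \le 0$, forcing $\mu' \ge \nu$ in the root order. Moreover $\mu' \ge \mu \not\le \nu$ implies $\mu' \not\le \nu$, so $\mu' > \nu$ strictly.

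The key observation is then that, under the hypothesis $\nu \not\le \lambda$, every such $\mu'$ also satisfies $\mu' \not\le \lambda$: indeed if $\mu' \le \lambda$, then $\nu \le \mu' \le \lambda$ gives $\nu \le \lambda$, a contradiction. Using the already-noted identity $M[\lambda]_{\mu'} = M_{\mu'}$ for $\mu' \not\le \lambda$, the vector $y'$ lives in $M[\lambda]_{\mu'}$. Since also $\mu' \not\le \nu$, $y'$ is one of the generators of $M[\lambda][\nu]$ viewed as a $U$-submodule of $M[\lambda]$, and hence $u^- \cdot y' \in M[\lambda][\nu]$. Summing over the decomposition shows $x \in (M[\lambda][\nu])_\nu$, completing the argument.

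The only delicate step is the triangular reduction in the second paragraph, which is standard for quantum groups with a PBW-type decomposition but requires care to track that the weight of the intermediate vector $y'$ remains $\not\le \nu$. Everything else is a matter of chasing the partial order $\le$ on $X$.
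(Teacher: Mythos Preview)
Your argument is correct and is essentially the same as the paper's: both use the PBW decomposition $U=U^-U^0U^+$ to show that any element of $(M[\nu])_\nu$ can be written as a $U^-$-combination of vectors in weight spaces $M_{\mu'}$ with $\mu'>\nu$, and then use that $\mu'>\nu$ together with $\nu\not\le\lambda$ forces $\mu'\not\le\lambda$, so these vectors already lie in $M[\lambda]$. The paper packages the first step by introducing the auxiliary submodule $B[{}^\prime\nu]$ (generated by the $B_\mu$ with $\mu>\nu$) and proving $B[{}^\prime\nu]_\nu=B[\nu]_\nu$ for any $B$, whereas you carry out the same reduction elementwise; the content is identical.
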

\begin{proof} For an object $B$ in $\CO$ let us denote by $B[{}^\prime\nu]$ the $U$-submodule of $B$ that is generated by the weight spaces $B_\mu$ with $\mu>\nu$. Then $B[{}^\prime\nu]\subset B[\nu]$. The PBW-theorem implies that $B[{}^\prime\nu]$ is generated by $\bigoplus_{\mu>\nu} B_\mu$ even over $U^-$. Likewise,  $B[\nu]$ is generated by $\bigoplus_{\mu\not\le\nu} B_\mu$ over $U^-$. Hence $B[{}^\prime\nu]_\nu=B[\nu]_\nu$. Now $M[\lambda][{}^\prime\nu]=M[{}^\prime\nu]$, as for any $\mu>\nu$ we have $\mu\not\le\lambda$, so $M[\lambda]_\mu=M_\mu$. Hence $M[\lambda][\nu]_\nu=M[\lambda][{}^\prime\nu]_\nu=M[{}^\prime\nu]_\nu=M[\nu]_\nu$.
\end{proof}
Now we state and prove a criterion for the existence of Weyl filtrations. 
\begin{proposition}\label{prop-charWf} Suppose that $\SA$ is generic.  Let $M$ be an object in $\CO$. The following statements are equivalent.
\begin{enumerate}
\item The set of weights of $M$ is finite and  $M_{[\nu]}$ is a free $\SA$-module of finite rank for all $\nu\in X$. 
\item  $M$ admits a Weyl filtration.
\end{enumerate}
If either of the above holds, then the multiplicity of $W(\mu)$ in a Weyl filtration equals the rank of $M_{[\mu]}$. In particular, $M_{[\mu]}\ne 0$ implies that $\mu$ is dominant. 
\end{proposition}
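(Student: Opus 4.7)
Plan. The plan is to prove the two implications separately, both by induction, with the multiplicity and dominance statements appearing as by-products.

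For $(2)\Rightarrow(1)$, I induct on the length $n$ of a chosen Weyl filtration $0=M_0\subset\dots\subset M_n=M$. Using that in the generic setting the relevant $\Ext^1$ between Weyl modules vanishes whenever $\lambda\not>\lambda'$, I reorder the filtration so that $\lambda_1$ is maximal among $\{\lambda_i\}$, hence also maximal in $\supp M$. Writing $M'=M/W(\lambda_1)$, I aim to establish, for each $\mu\in X$, the exact sequence
\[
0\to W(\lambda_1)_{[\mu]}\to M_{[\mu]}\to M'_{[\mu]}\to 0.
\]
This reduces to the identity $W(\lambda_1)_\mu\cap M[\mu]_\mu=W(\lambda_1)[\mu]_\mu$, which I verify case by case on whether $\lambda_1\le\mu$; the only delicate case is $\mu=\lambda_1$, where maximality of $\lambda_1$ in $\supp M$ gives $M[\lambda_1]_{\lambda_1}=0$ and the intersection is trivial. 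Since $W(\lambda_1)_{[\mu]}$ is $\SA$ or $0$ (hence free) and $M'_{[\mu]}$ is free of the expected rank by induction, $M_{[\mu]}$ is an extension of free by free, hence free of rank equal to the multiplicity of $\mu$ in $\{\lambda_i\}$. Finiteness of $\supp M$ and the dominance clause are then automatic.

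For $(1)\Rightarrow(2)$, I induct on $|\supp M|$. Pick $\lambda$ maximal in $\supp M$; then $M[\lambda]_\lambda=0$, so $M_{[\lambda]}=M_\lambda$ is free of some rank $r>0$. Maximality of $\lambda$ implies $U^+$ annihilates $M_{\SK,\lambda}$, so it generates in $M_\SK$ a sum of quotients of $\Delta_\SK(\lambda)$; finite-dimensionality of $M_\SK$ then forces $\lambda\in X^+$. Lemma~\ref{lemma-freehws} identifies $N:=U^-M_\lambda$ with $W(\lambda)^{\oplus r}$. Setting $Q:=M/N$, the support drops ($Q_\lambda=0$), and one checks the freeness condition propagates: for $\mu\ne\lambda$ maximality gives $\lambda\not\le\mu$, so $M_\lambda\subset M[\mu]$ and $N_\mu=U^-_{\mu-\lambda}M_\lambda\subset M[\mu]_\mu$, which yields $Q_{[\mu]}\cong M_{[\mu]}$, still free; for $\mu=\lambda$, $Q_{[\lambda]}=0$. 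The induction hypothesis then gives a Weyl filtration of $Q$, which combined with $N\cong W(\lambda)^{\oplus r}$ at the bottom yields one for $M$.

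The principal obstacle is showing torsion-freeness of $Q_\mu$, equivalently that $N_\mu$ is a saturated (pure) $\SA$-submodule of $M_\mu$ for each $\mu$. My plan is to analyze the saturation $\tilde N\supset N$ of $N$ in $M$: it is automatically a $U$-submodule with $\tilde N_\SK=N_\SK$ and $\tilde N_\lambda=M_\lambda=N_\lambda$. The freeness of $M_{[\mu]}$ forces any $x\in\tilde N_\mu$ satisfying $\xi x\in N_\mu\subset M[\mu]_\mu$ to already lie in $M[\mu]_\mu$; combining this with the explicit lattice structure of $W(\lambda)$, one propagates $\tilde N_\mu=N_\mu$ downward along the weight poset of $M$, concluding $\tilde N=N$. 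This weight-by-weight comparison, which relies essentially on the freeness hypothesis in (1), is the technical heart of the argument and the step that I expect to consume most effort.
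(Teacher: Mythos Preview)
Your $(2)\Rightarrow(1)$ direction is fine and essentially parallel to the paper's; the paper reorders the filtration with respect to a fixed $\nu$ rather than inducting on its length, but the content is the same.

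The gap is in $(1)\Rightarrow(2)$. You pick $\lambda$ \emph{maximal} in $\supp M$, set $N=U^-M_\lambda\cong W(\lambda)^{\oplus r}$ at the bottom, and then need $Q=M/N$ to lie in $\CO$ so that induction applies. You correctly identify saturation of $N$ in $M$ as the crux, but the sketch you give does not close it. Your argument shows that any $x\in\tilde N_\mu$ lies in $M[\mu]_\mu$ and, by downward induction, that $e_\alpha^{[n]}x\in N$ for all $\alpha,n$. But the implication ``$x\in W(\lambda)_\SK$ and $e_\alpha^{[n]}x\in W(\lambda)$ for all $\alpha,n$ $\Rightarrow$ $x\in W(\lambda)$'' is \emph{false}: already for $\mathfrak{sl}_2$ over $\SA=\DZ_{(p)}$ with $q=1$ and $\lambda=p$, the element $x=\tfrac{1}{p}f v_p$ satisfies $ex=v_p\in W(p)$ and $e^{[n]}x=0$ for $n\ge 2$, yet $x\notin W(p)$. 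So ``the explicit lattice structure of $W(\lambda)$'' does not give what you need, and nothing in your outline replaces it. (The containment $x\in M[\mu]_\mu$ does not help either, since $M[\mu]_\mu$ is typically much larger than $N_\mu$.)

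The paper sidesteps this entirely by reversing the roles: it takes $\lambda$ \emph{minimal} among weights with $M_{[\lambda]}\ne 0$, sets $N:=M[\lambda]$ (the submodule), and $M':=M/N$ (the quotient). Then Lemma~\ref{lemma-ws} gives $N_{[\nu]}\cong M_{[\nu]}$ for $\nu\ne\lambda$ and $N_{[\lambda]}=0$, so induction applies to the \emph{submodule} $N$ without any torsion question. The quotient $M'$ is generated over $U^-$ by $M'_\lambda=M_{[\lambda]}$, which is free by hypothesis, and Lemma~\ref{lemma-freehws} (which does not assume its input lies in $\CO$) then gives $M'\cong W(\lambda)^{\oplus r}$ directly. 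No saturation argument is needed.

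If you want to rescue your top-down approach, the cleanest fix is not to prove $Q$ torsion-free but to strengthen the induction: prove $(1)\Rightarrow(2)$ for arbitrary $X$-graded weight modules with finite support and $M_{[\nu]}$ free of finite rank, dropping the torsion-freeness hypothesis on $M$. Your computation $Q_{[\mu]}\cong M_{[\mu]}$ for $\mu\ne\lambda$ goes through verbatim (it only uses $N\subset M[\mu]$), and Lemma~\ref{lemma-freehws} still applies to $N$; the induction then gives $Q$ a Weyl filtration, whence $Q$ is torsion-free a posteriori.
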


\begin{proof}  Assume that (2) holds. Then  the set of weights of $M$  is finite. Standard arguments show that  $\Ext^1_{\CO}(W(\lambda),W(\mu))=0$ if $\mu\not>\lambda$ (cf. the proof of  Lemma \ref{lemma-freehws}). Let $\nu\in X$. The $\Ext$-vanishing statement now implies that the subquotients of a given Weyl filtration of $M$ can be ``rearranged'' in such a way that  we obtain a filtration   $0=M_0\subset M_1\subset\dots\subset M_n=M$ and some $1\le r\le s\le n$ such that $M_i/M_{i-1}$ has a highest weight $\not\le\nu$ if $i\le r$, has highest weight $\nu$ if $r< i\le s$, and has a highest weight $<\nu$, if $i>s$. Hence $M_s/M_r$ is a direct sum of copies of $W(\nu)$. 
Moreover, in the notation of the paragraph preceding this proposition, we have $M[\nu]=M_r$ and $M_{[\nu]}=(M/M_r)_\nu=(M_s/M_r)_{\nu}$ (as $(M/M_s)_\nu=\{0\}$). Hence $M_{[\nu]}$ is free of finite rank $s-r$ as an $\SA$-module. So (2) implies (1). 

Now assume that (1) holds. We can assume that $M\ne 0$. As the set of weights of $M$ is finite there  must be a minimal weight $\lambda$ such that $M_{[\lambda]}\ne 0$ (note that $M_{[\gamma]}=M_\gamma$ if $\gamma$ is maximal among the weights of $M$, so some $M_{[\gamma]}$ are non-zero). Let us fix such a minimal $\lambda$. Set  $N:=M[\lambda]\subset M$ and  $M^\prime=M/N$, so by definition $M^\prime_\lambda=M_{[\lambda]}$. We claim the following.
\begin{enumerate}
\item[(a)] We have $N_{[\nu]}\cong M_{[\nu]}$ for all $\nu\ne\lambda$, and $N_{[\lambda]}=0$. 
\item[(b)] $M^\prime$ is generated, as a $U^-_\SA$-module, by its $\lambda$-weight space. 
\item[(c)] $M^\prime$ is finitely generated as an $\SA$-module, and $\lambda$ is dominant. 
\end{enumerate}
If these statements are true, then we can prove that $M$ admits a Weyl filtration  as follows. From (a) we can deduce, by induction on the number of weights $\nu$ with $M_{[\nu]}\ne 0$, that $N$ admits a Weyl filtration.   Then from (b) and (c) we deduce, using the fact that $M^\prime_\lambda=M_{[\lambda]}$ is free of finite rank (by assumption) and Lemma \ref{lemma-freehws}, that $M^\prime$ is isomorphic to a direct sum of $\rk\, M_{[\lambda]}$ many copies of $W(\lambda)$. So both $N$ and $M^\prime=M/N$ admit a Weyl filtration, hence so does $M$. The last statement in the proposition about the multiplicities follows  by induction as well. 

So let us prove (a). By definition, $N$ is generated by  its weight spaces $N_\gamma$ with $\gamma\not\le\lambda$. For all $\nu\le\lambda$ we hence have $N[\nu]=N$, so $N_{[\nu]}=0$. The minimality of $\lambda$ implies $M_{[\nu]}=0$ for all $\nu<\lambda$. Now suppose that $\nu\not\le\lambda$. Then $N_\nu=M_\nu$ and $N[\nu]_\nu=M[\nu]_\nu$ by Lemma \ref{lemma-ws}, hence $N_{[\nu]}=(N/N[\nu])_\nu=(M/M[\nu])_\nu=M_{[\nu]}$. Part (a) is proven.

Now we prove (b). Recall that all weights of $M^\prime$ are smaller or equal to  $\lambda$.  If $M^\prime$ wasn't generated as an $U_\SA^-$-module by its $\lambda$-weight space, then there would exist a weight $\nu<\lambda$ such that $M^\prime_{[\nu]}\ne 0$. Hence $M^\prime$ is not generated  (over $U_\SA$) by $\bigoplus_{\gamma\not\le\nu}M^\prime_\gamma$. As $M^\prime$ is quotient of $M$, $M$ cannot be generated by $\bigoplus_{\gamma\not\le\nu}M_\gamma$. Hence  $M_{[\nu]}\ne 0$, which is a contradiction to the minimality of $\lambda$. 

 We turn to statement (c). From assumption (1) it follows that $M$ is finitely generated as an $\SA$-module. Hence so is its quotient $M^\prime$. By (b), $M^\prime$ is a quotient of a finite direct sum of copies of the Verma module $\Delta(\lambda)$ (cf.~the proof of Lemma \ref{lemma-freehws}). By Proposition 3.2 in \cite{L98}, a simple highest weight module for $U_\SK$ is integrable only if $\lambda$ is dominant. As $M^\prime\otimes_\SA\SK$ is finite dimensional,  $\lambda$ must be dominant.  
\end{proof}

\subsection{Finitely generated objects in $\CX$}
Now we want to show that the objects in $\CX$ that are finitely generated as $\SA$-modules, correspond, via the realization functor $\mathsf{R}$, to the objects in $\CO$ that admit a Weyl filtration. In a first step we are interested in what happens if we apply the functor $\mathsf{S}$ to objects that admit a Weyl filtration. 

\begin{proposition}\label{prop-equivcat} Suppose that $\SA$ is generic. 
\begin{enumerate}
\item Let $M$ be an object in $\CO^W$. Then $\mathsf{S}(M)$ is an object in $\CX$. 
\item For all dominant $\lambda$ we have $\mathsf{S}(W(\lambda))\cong S_{{\min}}(\lambda)$.
\end{enumerate}
\end{proposition}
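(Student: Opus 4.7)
For part (1), since $\mathsf{S}$ takes values in $\CX^{\pre}$ by Lemma~\ref{lemma-funS}, only condition (X3) needs to be verified on $\mathsf{S}(M)$. By Lemma~\ref{lemma-X3} this reduces to three statements: (a) $M$ is a torsion free $\SA$-module; (b) $(M_\mu)_\SK=(\ker E_\mu)_\SK\oplus(\im F_\mu)_\SK$ for every $\mu$; (c) $M_\mu/\im F_\mu$ is a free $\SA$-module. Condition (a) is immediate, since the definition of $\CO$ already requires torsion freeness of each weight space. For (b), the category $\CO_\SK^W$ is semisimple, so $M_\SK$ decomposes as a direct sum $\bigoplus_j L_\SK(\lambda_j)$; Proposition~\ref{prop-irreps} places each $\mathsf{S}(L_\SK(\lambda_j))$ in $\CX_\SK$, and since $\CX_\SK$ is closed under direct sums, $\mathsf{S}(M)_\SK\in\CX_\SK$, whence (b).

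The substantive step is (c). The plan is to prove that $\im F^M_\mu=M[\mu]_\mu$ for every $M\in\CO$, so that $M_\mu/\im F^M_\mu=M_{[\mu]}$, a free $\SA$-module by Proposition~\ref{prop-charWf}. The inclusion $\im F^M_\mu\subset M[\mu]_\mu$ is immediate, because $\nu=\mu+n\alpha$ with $n>0$ satisfies $\nu\not\le\mu$. For the reverse inclusion, any element of $M[\mu]_\mu$ is a finite sum of terms $u\cdot m$ with $m\in M_\nu$, $\nu\not\le\mu$, and $u\in U_{\mu-\nu}$. By the PBW decomposition (Theorem~6.7 of \cite{L90}) one may write $u$ as a sum of products $u^-hu^+$ with $u^\pm\in U^\pm$ of weights $\pm\gamma_\pm\in\pm\DZ_{\ge 0}\Pi$ and $\gamma_+-\gamma_-=\mu-\nu$. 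The hypothesis $\nu\not\le\mu$ rules out $\gamma_-=0$, which would force $\mu-\nu=\gamma_+\in\DZ_{\ge 0}\Pi$. Hence $u^-$ lies in the negative weight part of $U^-$; expanding it as a sum of monomials in the generators $f_\alpha^{[n]}$, the outermost factor of each monomial acts on an element of $M_{\mu+n\alpha}$, placing $u\cdot m$ in $\im F^M_\mu$.

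For part (2), applying part (1) yields $\mathsf{S}(W(\lambda))\in\CX$ with top weight $\lambda$ and $\mathsf{S}(W(\lambda))_\lambda=\SA$. Cyclicity of $W(\lambda)$ over $U^-$ forces $\im F^{W(\lambda)}_\mu=W(\lambda)_\mu$ for every $\mu\ne\lambda$; for $\mu=\lambda$ itself, $\mathsf{S}(W(\lambda))_{\delta\lambda}=0$ since $\lambda$ is the highest weight, so both $\mathsf{S}(W(\lambda))_{\{\lambda\}}$ and $\mathsf{S}(W(\lambda))_{\lgl\lambda\rgl}$ vanish. Hence $\mathsf{S}(W(\lambda))_{\{\mu\}}=\mathsf{S}(W(\lambda))_{\lgl\mu\rgl}$ for every $\mu$, making $\mathsf{S}(W(\lambda))$ minimal. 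Any $U$-endomorphism of $W(\lambda)$ restricts to a scalar on the rank one space $W(\lambda)_\lambda$ and, by cyclicity, equals that scalar times the identity; full faithfulness of $\mathsf{S}$ then gives $\End_{\CX}(\mathsf{S}(W(\lambda)))=\SA\cdot\id$, which has no non-trivial idempotents since $\SA$ is a domain, so $\mathsf{S}(W(\lambda))$ is indecomposable. The uniqueness clause of Proposition~\ref{prop-catminimal}(1) now identifies $\mathsf{S}(W(\lambda))$ with $S_{\min}(\lambda)$. The main obstacle I anticipate is the PBW-based identification $\im F^M_\mu=M[\mu]_\mu$; once this is in hand, (c) is immediate from Proposition~\ref{prop-charWf} and everything else reduces to routine bookkeeping.
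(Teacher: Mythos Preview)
Your proof is correct, and for part (2) it is essentially the paper's argument with slightly more detail on indecomposability.

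For part (1)(c) you take a mildly different route. The paper rearranges the given Weyl filtration so that the subquotients $W(\mu_i)$ with $\mu_i>\mu$ occupy the bottom positions $i\le r$, identifies $\im F_\mu=(M_r)_\mu$, and then uses that $M/M_r$ (again Weyl-filtered) is free to conclude that its $\mu$-weight space $M_\mu/\im F_\mu$ is free. You instead prove the intrinsic identity $\im F_\mu^M=M[\mu]_\mu$ directly from PBW for arbitrary $M\in\CO$, so that $M_\mu/\im F_\mu=M_{[\mu]}$, and then invoke Proposition~\ref{prop-charWf}. Your route is a bit more conceptual in that it avoids the $\Ext^1$-based rearrangement of the filtration at this stage (that rearrangement is already absorbed into Proposition~\ref{prop-charWf}); moreover the identification $M_\mu/\im F_\mu\cong M_{[\mu]}$ that you spell out is precisely what the paper later asserts without proof in the argument for Theorem~\ref{thm-Wflag}, so your version supplies that detail as well. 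The paper's route, on the other hand, keeps everything inside the single filtration and does not need to name the submodule $M[\mu]$ here.
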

\begin{proof} We prove claim (1).  In view of Lemma \ref{lemma-funS} we need to  show that the property (X3) is satisfied for $\mathsf{S}(M)$. So let $\mu\in X$. As $M_\SK$ is semi-simple we have  $(\mathsf{S}(M)_\mu)_\SK=(\im F_\mu)_\SK\oplus(\ker E_\mu)_\SK$. As $M$ admits a Weyl filtration, Proposition \ref{prop-Weylchar} implies that $M$ is free as an $\SA$-module. By Lemma \ref{lemma-X3}, we now only need to   show that property (X3c) holds. Let
$
\{0\}=M_0\subset M_1\subset \dots\subset M_n=M
$ be a filtration 
such that $M_{i+1}/M_i$ is isomorphic to $W(\mu_i)$. As in the proof of Proposition \ref{prop-charWf}  we can assume that   there exists an integer $r$ such that $\mu<\mu_i$ implies $i\le r$. It follows that  $\im F_\mu =(M_r)_\mu\subset M_\mu$. As the quotient $M/M_r$ admits a filtration with subquotients isomorphic to Weyl modules with dominant highest weights,  it is free as an $\SA$-module by Proposition \ref{prop-Weylchar}. In particular, its $\mu$-weight space is free. By the above, this identifies with $M_\mu/\im F_\mu$. Hence property (X3c) holds, so $\mathsf{S}(M)$ is an object in $\CX$. 

Now we prove claim (2). 
For $N=\mathsf{S}(W(\lambda))$ we have $N_\mu=\im F_\mu$ for all $\mu\ne \lambda$ as $W(\lambda)$ is cyclic as a $U^-$-module and $U^-$ is generated by the elements $f_{\alpha}^{[n]}$ with $\alpha$ simple and $n>0$. Hence  $N$ is an indecomposable minimal object, so $N\cong S_{{\min}}(\nu)$ for some $\nu\in X$ by Proposition \ref{prop-catminimal}.  A comparison of weights shows $\nu=\lambda$.
\end{proof} 
Now we define the counterpart of $\CO^W\subset\CO$ in $\CX$.
\begin{definition}\label{def-Xfin}
We denote by $\CX^{\fin}$ the full subcategory of $\CX$ that contains all objects $M$ that are finitely generated as $\SA$-modules. 
\end{definition}
We will see in a moment that any object in $\CX^{\fin}$ is automatically free as an $\SA$-module.
As we assume that each weight space of an object in $\CX$ is finitely generated, the property in the definition above is equivalent to the set of weights being finite.

\begin{theorem} \label{thm-Wflag} Suppose that $\SA$ is generic. Then the functors $\mathsf{S}$ and $\mathsf{R}$ restrict to mutually inverse equivalences between the categories $\CX^{\fin}$ and $\CO^{W}$.
\end{theorem}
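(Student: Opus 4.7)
The plan is to verify separately that $\mathsf{S}$ sends $\CO^W$ into $\CX^{\fin}$, that $\mathsf{R}$ sends $\CX^{\fin}$ into $\CO^W$, and then to check that both round-trips are naturally isomorphic to the identity.

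The first inclusion is immediate: for $M\in\CO^W$ the set of weights is finite and $M$ is finitely generated over $\SA$ (each Weyl module being such), so Proposition~\ref{prop-equivcat}~(1) gives $\mathsf{S}(M)\in\CX$, and clearly $\mathsf{S}(M)\in\CX^{\fin}$.

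The main step is to show that for $M\in\CX^{\fin}$ the $U_\SA$-module $N:=\mathsf{R}(M)$ admits a Weyl filtration. I would invoke the criterion of Proposition~\ref{prop-charWf}: one needs the set of weights of $N$ to be finite (immediate, since $M\in\CX^{\fin}$) and $N_{[\nu]}$ to be a free $\SA$-module of finite rank for every $\nu\in X$. The heart of the argument is the identification
$$
N_{[\nu]}=N_\nu/N[\nu]_\nu\;\cong\;M_\nu/\im F_\nu^M,
$$
which exhibits $N_{[\nu]}$ as a free $\SA$-module of finite rank thanks to axioms (X1) and (X3c). The nontrivial inclusion $N[\nu]_\nu\subset\im F_\nu^M$ I would establish as follows: using Lemma~\ref{lemma-ws} replace $N[\nu]_\nu$ by $N[{}^\prime\nu]_\nu$; by the PBW theorem the submodule $N[{}^\prime\nu]$ is generated over $U^-$ by $\bigoplus_{\mu>\nu}N_\mu$, and any monomial in the divided-power generators $f_{\alpha_i}^{[n_i]}$ which lowers a vector of weight $\mu>\nu$ to weight $\nu$ must end with an application of some $f_\alpha^{[n]}\colon N_{\nu+n\alpha}\to N_\nu$, i.e.~with the operator $F_{\alpha,n}^M$. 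The reverse inclusion $\im F_\nu^M\subset N[{}^\prime\nu]_\nu$ is clear.

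Finally, Theorem~\ref{thm-XU} already provides $\mathsf{S}\circ\mathsf{R}\cong\id_{\CX}$, and in particular on $\CX^{\fin}$. For the converse composition, observe that on an object $M\in\CO^W$ the underlying $\SA$-module and the $X$-grading of $\mathsf{R}(\mathsf{S}(M))$ coincide with those of $M$; the operators of $\mathsf{S}(M)$ are, by definition, the action maps of $e_\alpha^{[n]}$ and $f_\alpha^{[n]}$ on $M$, and the $U^0$-action is encoded by the characters $\chi_\mu$ through the $X$-grading. Since $U$ is generated by the divided powers of the Serre generators together with $U^0$, the uniqueness part of Theorem~\ref{thm-XU} forces the two $U$-structures to agree; the resulting natural isomorphisms patch together functorially. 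The main obstacle I anticipate is precisely the identification of $N_{[\nu]}$ with $M_\nu/\im F_\nu^M$, since it is the bridge between the abstract module-theoretic quotient appearing in Proposition~\ref{prop-charWf} and the combinatorial datum built into the axioms of $\CX$.
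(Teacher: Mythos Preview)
Your proposal is correct and follows essentially the same route as the paper: both directions are handled via Proposition~\ref{prop-equivcat} and Proposition~\ref{prop-charWf}, with the key identification $\mathsf{R}(M)_{[\nu]}\cong M_\nu/\im F_\nu^M$. You supply more detail than the paper does on this identification (the paper simply asserts it) and you address the composition $\mathsf{R}\circ\mathsf{S}$ explicitly, whereas the paper leaves this implicit in the fully faithfulness of $\mathsf{S}$ together with $\mathsf{S}\circ\mathsf{R}\cong\id_\CX$.
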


\begin{proof}  In view of Proposition \ref{prop-equivcat} we need to show that $\mathsf{S}(M)$ is finitely generated as an  $\SA$-module for all objects $M$ in $\CO$ that admit a Weyl filtration and, conversely, that $\mathsf{R}(M)$  admits a Weyl filtration for all objects $M$ in $\CX^{\fin}$. The first statement follows easily from the facts that the functor $\mathsf{S}$ is the identity functor on the underlying $\SA$-modules and that each Weyl module is free of finite rank as an $\SA$-module. 

Now suppose that $M$ is an object in $\CX$ that is finitely generated as an $\SA$-module. We already know from Theorem \ref{thm-XU} that $\mathsf{R}(M)$ is an object in $\CO$. We want to employ Proposition \ref{prop-charWf}, so we need to check that $\mathsf{R}(M)$ has only finitely many  weights and that $\mathsf{R}(M)_{[\mu]}$ is a free $\SA$-module of finite rank for all $\mu\in X$. The first statement is clear. For the second, note that we can canonically identify  $\mathsf{R}(M)_{[\mu]}$ with $M_\mu/\im F_\mu$. The latter is, by definition of the category $\CX$, a free $\SA$-module, and of finite rank as $M$ is of finite rank.
\end{proof}

From the above we can deduce that each object in $\CX^{\fin}$ is even {\em free} of finite rank as an $\SA$-module.

\section{The maximal extension}
Recall that we classified the subcategory of minimal objects in $\CX$ in Proposition \ref{prop-catminimal}. In this section we study the opposite extremal case, i.e.~we classify the {\em maximal} objects in $\CX$.  As in the minimal case, for every $\lambda\in X$ there is an up to isomorphism unique indecomposable maximal object $S_{\max}(\lambda)$ with highest weight $\lambda$. Using the results in the previous section we show that $S_{\max}(\lambda)$ is a finitely generated, hence free, $\SA$-module for all {\em dominant} weights $\lambda$. In particular, $\mathsf{R}(S_{\max}(\lambda))$ is an object in $\CO$ that admits a Weyl filtration.  It is plausible that $S_{\max}(\lambda)$ is free as an $\SA$-module for arbitrary $\lambda$ (but not of finite rank if $\lambda$ is not dominant), but we cannot prove this. 

In a subsequent section we show that $S_{\max}(\lambda)$ admits a {\em non-degenerate contravariant symmetric bilinear form} (cf.~Definition \ref{def-cform}) under the assumption that $\lambda$ is dominant (again, it is plausible that this restriction is not necessary). This fact is then used in the last section to show that $\mathsf{R}(S_{\max}(\lambda))$ is an indecomposble tilting module.

For the construction of the maximal objects in $\CX$  we need to assume that {\em projective covers} exist in the category of $\SA$-modules. Hence we assume that $\SA$ is a local ring. Here is a short reminder on projective covers.
 Let $\SR$ be a ring and $M$ an $\SR$-module. Recall that a {\em projective cover} of $M$ is a surjective homomorphism $\phi\colon P\to M$ such that $P$ is a projective $\SR$-module   and such that  any submodule $U\subset P$ with $\phi(U)=M$ satisfies $U=P$. If $\SR$ is a local ring, then projective covers exist for finitely generated $\SR$-modules. They can be constructed as follows. Denote by $\SF$ the residue field of $\SR$. For an $\SR$-module $N$ we let $\ol N=N\otimes_\SR\SF$ be the associated $\SF$-vector space. In the situation above, choose an isomorphism $\SF^{n}\cong \ol M$. This can be lifted to a homomorphism $\phi\colon \SR^{n}\to M$, and Nakayama's lemma implies that this is a projective cover.

\subsection{The maximal extension}
Let $I^\prime\subset I$ be a pair of closed subsets of $X$.  We now construct a maximal extension of an object $M^\prime$ in $\CX_{I^\prime}$, i.e.~an object $M$ in $\CX_I$ such that $M_{I^\prime}$ is isomorphic to $M^\prime$ and such that $M_{\lgl\mu\rgl}=M_{\{\mu\},\max}$ for all $\mu\in I\setminus I^\prime$. The following summarises the properties of this extension.

\begin{proposition} \label{prop-satextobj} Assume that $\SA$ is  local. 
 Let  $M^\prime$ be an object in $\CX_{I^\prime}$.  Then there exists an up to isomorphism unique  object $M$ in $\CX_I$ with the following properties.
\begin{enumerate}
\item $M_{I^\prime}$ is isomorphic to $M^\prime$.
\item An endomorphism $f\colon M\to M$ in $\CX_I$ is an automorphism if and only if $f_{I^\prime}\colon M_{I^{\prime}}\to M_{I^\prime}$ is an automorphism.
\item $M_{\lgl\mu\rgl}=M_{\{\mu\},\max}$ for all $\mu\in I\setminus I^\prime$. 
\end{enumerate}
\end{proposition}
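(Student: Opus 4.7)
The plan is to mimic the inductive construction of Proposition~\ref{prop-minimalextobj}, adding one weight $\mu$ at a time and at each step choosing $M_\mu$ as large as possible subject to the freeness requirement~(X3c). First I would reduce to the case $I=I^\prime\cup\{\mu\}$ with $\mu$ maximal in $I\setminus I^\prime$, so that $M_{\delta\mu}$ and the ``$\widehat M_\mu$''-quotient module $\im F_\mu$ together with its injection $E_\mu|_{\im F_\mu}\colon \im F_\mu\hookrightarrow M_{\delta\mu}$ onto $M_{\{\mu\}}$ are already supplied by the minimal construction. I would then set $Q:=M_{\{\mu\},\max}/M_{\{\mu\}}$; by (X1) and the Noetherianity of $\SA$, $Q$ is a finitely generated torsion $\SA$-module, and since $\SA$ is local it admits a projective cover $\pi\colon P\twoheadrightarrow Q$ with $P$ finitely generated free. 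Projectivity of $P$ allows me to lift $\pi$ to an $\SA$-linear map $s\colon P\to M_{\{\mu\},\max}\subseteq M_{\delta\mu}$.

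Now define $M_\mu:=\im F_\mu\oplus P$, with $E_\mu:=(E_\mu|_{\im F_\mu},\,s)$ and with $F_\mu$ the composition $M_{\delta\mu}\twoheadrightarrow\im F_\mu\hookrightarrow M_\mu$ inherited from the minimal construction. Then (X1) and (X3c) are immediate from $P$ being finitely generated free, while (X3a) and (X2) hold because $F_\mu$ still lands in $\im F_\mu$, so the commutation relations only involve the restriction of $E_\mu$ to that summand, where they have already been verified in Proposition~\ref{prop-minimalextobj}. For (X3b) one computes $M_{\langle\mu\rangle}=E_\mu(M_\mu)=M_{\{\mu\}}+s(P)$, and this equals $M_{\{\mu\},\max}$ because the composition $P\xrightarrow{s}M_{\{\mu\},\max}\twoheadrightarrow Q$ coincides with the surjection $\pi$; this simultaneously proves property~(3).

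To establish property~(2), I would argue by downward induction on weights. Assume that an endomorphism $f\colon M\to M$ with $f_{I^\prime}$ an automorphism is already an automorphism on all weights strictly above $\mu$, so $f_{\delta\mu}$ is an automorphism; it preserves $M_{\{\mu\}}$ and $M_{\{\mu\},\max}$, hence induces an automorphism $\bar f_Q$ of $Q$. Because $F_\mu$ is surjective onto $\im F_\mu$, the restriction of $f_\mu$ to $\im F_\mu$ is an isomorphism, and $f_\mu$ induces an endomorphism $\bar f_P$ of $M_\mu/\im F_\mu\cong P$ fitting in the commutative square $\pi\circ\bar f_P=\bar f_Q\circ\pi$. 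Then $\pi(\bar f_P(P))=\bar f_Q(Q)=Q$, so the defining property of the projective cover forces $\bar f_P(P)=P$; since $P$ is finitely generated over the Noetherian ring $\SA$, any surjective endomorphism of $P$ is automatically an isomorphism, and therefore $f_\mu$ is an isomorphism. Uniqueness now follows formally: for any other $M^\prime$ satisfying (1)--(3), property~(3) for both $M$ and $M^\prime$ allows Lemma~\ref{lemma-minimalextmor} to extend, weight by weight, an isomorphism $M_{I^\prime}\xrightarrow{\sim}M^\prime_{I^\prime}$ and its inverse to morphisms $M\to M^\prime$ and $M^\prime\to M$, whose composites restrict to identities on $I^\prime$ and are hence, by property~(2), automorphisms.

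I expect the main obstacle to be the verification of property~(2): it genuinely requires both the defining property of the projective cover (to deduce surjectivity of $\bar f_P$ from that of $\pi\circ\bar f_P$) and the Noetherian fact that surjective endomorphisms of finitely generated modules are isomorphisms. Everything else is a direct adaptation of the minimal-extension argument, with the free summand $P$ of $M_\mu$ replacing the zero contribution in the minimal case.
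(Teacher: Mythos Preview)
Your proposal is correct and follows essentially the same route as the paper's own proof: reduce to $I=I^\prime\cup\{\mu\}$, take the minimal extension $\tM$ (so that $\tM_\mu=\im F_\mu$), set $Q=M_{\{\mu\},\max}/M_{\{\mu\}}$, choose a projective cover $P\twoheadrightarrow Q$ and a lift $s\colon P\to M_{\{\mu\},\max}$, define $M_\mu=\im F_\mu\oplus P$ with $E_\mu=(E_\mu|_{\im F_\mu},s)$, and verify (X1)--(X3) and properties (1)--(3) exactly as you outline. Your treatment of property~(2), making explicit the commuting square $\pi\circ\bar f_P=\bar f_Q\circ\pi$ and invoking first the projective-cover condition for surjectivity of $\bar f_P$ and then the Noetherian fact that surjective endomorphisms of finitely generated modules are bijective, is in fact a shade more detailed than the paper's one-line appeal to the projective cover; and your uniqueness argument via Lemma~\ref{lemma-minimalextmor} and property~(2) is identical to the paper's.
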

\noindent
We call $M$ the {\em maximal extension} of $M^\prime$.

\begin{proof} As in the proof of Proposition \ref{prop-minimalextobj} we argue that it is sufficient to consider the case $I=I^\prime\cup\{\mu\}$ for some $\mu\not\in I^\prime$. So let us assume this in the course of the proof.
Let us prove that an object $M$ having the properties (1), (2), and (3) is unique. So suppose that $M_1$ and $M_2$ have these properties. Then, by (1), we have an isomorphism $M_{1I^\prime}\cong M_{2I^\prime}$.  From Proposition \ref{prop-mainext} we deduce that this isomorphism identifies $M_{1\{\mu\}}$ with $M_{2\{\mu\}}$ and hence $M_{1\{\mu\},\max}$ with $M_{2\{\mu\},\max}$, so $M_{1\lgl\mu\rgl}$ with $M_{2\lgl\mu\rgl}$ by property (3). So the condition in Lemma \ref{lemma-minimalextmor} is satisfied, so  the chosen isomorphism extends to a homomorphism $f\colon M_1\to M_2$. Reversing the roles of $M_2$ and $M_1$ yields a homomorphism $g\colon M_2\to M_1$ in an analogous way. Now property (2) implies that $g\circ f$ and $f\circ g$ are automorphisms. Hence $f$ and $g$ are isomorphisms.

It remains to show that an object $M$ with  properties (1), (2) and (3) exists.  First we consider the minimal extension $\widetilde M:=\mathsf{E}_{I^\prime}^I(M)$.  Then we can identify  $\tM_{\delta\mu}$ with $M^\prime_{\delta\mu}$. We set  $Q:=\tM_{\{\mu\},\max}/\tM_{\{\mu\}}$, so this is a torsion $\SA$-module. By property (X1), the $\SA$-module $\tM_{\delta\mu}$ is finitely generated, hence so is its submodule $\tM_{\{\mu\},\max}$ (recall that we always  assume that $\SA$ is Noetherian). Hence $Q$ is finitely generated.  Now we fix a projective cover  $\ol\gamma\colon D\to Q$ in the category of $\SA$-modules, and we denote by $\gamma\colon D\to \tM_{\{\mu\},\max}$ a lift of $\ol\gamma$. We can also  consider $\gamma$ as a homomorphism from $D$ to $\tM_{\delta\mu}$. 

We define $M$ as follows.  For all $\nu\in I^\prime$, $\alpha\in\Pi$ and $n>0$ we set $M_\nu:=\tM_\nu$, $E^M_{\nu,\alpha,n}:=E^{\tM}_{\nu,\alpha,n}$ and $F^M_{\nu,\alpha,n}:=F^{\tM}_{\nu,\alpha,n}$. Then we set $M_\mu:=\tM_\mu\oplus D$ and define $F^M_{\mu}:=(F^{\tM}_{\mu},0)^T\colon M_{\delta\mu}=\tM_{\delta\mu}\to M_{\mu}$ and $E^{M}_{\mu}:=(E^{\tM}_{\mu},\gamma)\colon M_{\mu}\to M_{\delta\mu}=\tM_{\delta\mu}$. 
We now show that $M=\bigoplus_{\nu\in I}M_\nu$ together with the $E$- and $F$-maps above is an object in $\CX_I$. Condition  (X1) is clearly satisfied. We now show that (X2) is also satisfied. Let $\nu\in I$, $\alpha,\beta\in\Pi$, $m,n>0$ and $v\in M_{\nu+n\beta}$. We need to show that 

\begin{align*}
E_{\alpha,m}F_{\beta,n}(v)=
\begin{cases}
F_{\beta,n}E_{\alpha,m}(v),&\text{ if $\alpha\ne\beta$}\\
\sum_{0\le r\le \min(m,n)} \qchoose{\lgl\nu,\alpha^\vee\rgl+m+n}{r}_{d_\alpha}F_{\alpha,n-r}E_{\alpha,m-r}(v), &\text{ if $\alpha=\beta$}.
\end{cases}
\end{align*}
If $\nu\ne\mu$, then this follows immediately from the fact that (X2) is satisfied for $\tM$, and in the case $\nu=\mu$ it follows as $E^M_{\mu,\alpha,m}$ coincides with $E^{\tM}_{\mu,\alpha,m}$ on the image of $F_{\mu,\beta,n}$.  
We now check the condition (X3). It  is satisfied for all $\nu\ne\mu$, as it is satisfied for $\tM$. In the case $\nu=\mu$, (X3a) follows from the corresponding condition for $\tM$ as the image of $F_\mu^M$ coincides with the image of $F_\mu^{\tM}$ and $E^M_\mu$ agrees with $E^{\tM}_\mu$ on this image. By construction $M_{\lgl\mu\rgl}=M_{\{\mu\},\max}$, so  the inclusion $M_{\{\mu\}}\subset M_{\lgl\mu\rgl}$ has a torsion cokernel. Finally, we have $\im F_\mu^{M}=(\tM_\mu,0)$, so the quotient $M_\mu/\im F^M_\mu$ is isomorphic to $D$. As $D$ was chosen to be a projective $\SA$-module, it is free. So we have indeed constructed an object in $\CX_I$.

 We need to check that $M$ satisfies the properties (1), (2)  and (3). Clearly, $M_{I^\prime}=\tM_{I^\prime}\cong M^\prime$ so (1) is satisfied.
We have already observed that $M_{\lgl\mu\rgl}=M_{\{\mu\},\max}$, hence (3).  Now let $f\colon M\to M$ be an endomorphism and suppose that $f_{I^\prime}\colon M_{I^\prime}\to M_{I^\prime}$ is an automorphism, i.e.~$f_\nu\colon M_\nu\to M_\nu$ is an automorphism for all $\nu\ne\mu$.  Then $f_{\delta\mu}\colon M_{\delta\mu}\to M_{\delta\mu}$ is an automorphism, and hence the restriction of $f_{\mu}|_{\im F_\mu}\colon \im F_\mu\to \im F_\mu$  is an automorphism.  Applying $E_\mu$ shows that $f_{\{\mu\}}$ is an automorphism of $M_{\{\mu\}}$. Hence $f_{\delta\mu}$ induces an automorphism of $M_{\{\mu\},\max}$ and we obtain an induced automorphism of the quotient $Q$ defined earlier in this proof. As $\ol\gamma\colon D\to Q$ is a projective cover, also the induced endomorphism on $D$  must be an automorphism. Hence $f_\mu$ is an automorphism, and hence so is $f$. Hence (2) also holds.
\end{proof}

  
  \subsection{The first example, continued} \label{subsec-excont} Recall the setup of the example discussed in Section \ref{ex-first}. In the first step we found that the homomorphism $E_\mu\colon \SA\to\SA$ is multiplication with $[\lambda]$. So $M_{\{\mu\}}=[\lambda]\SA\subset M_{\delta\mu}=\SA$. Suppose that $[\lambda]$ does not vanish in $\SA$. Then  $M_{\{\mu\}}=[\lambda]\SA\subset M_{\{\mu\},\max}=\SA$. If $[\lambda]$ is not invertible in $\SA$, then we have a strict embedding and a torsion quotient.  In the notation of the above proposition, we need to choose a projective cover $\gamma\colon D\to M_{\{\mu\},\max}/M_{\{\mu\}}$ and obtain $M_\mu:=\SA\oplus D$ as the  maximal extension. 

\subsection{An example in the $A_2$-case} Let $R=\{\pm\alpha,\pm\beta,\pm(\alpha+\beta)\}$ be the root system of type $A_2$ with $\Pi=\{\alpha,\beta\}$. Fix $\lambda\in X$ and set $\mu:=\lambda-\alpha-\beta$. We set $I^\prime:=\{\nu\in X\mid \nu>\mu\}$ and $I:=I^\prime\cup\{\mu\}$. Then we define $M^\prime:=\bigoplus_{\nu\in I^\prime} M^\prime_\nu$ by $M^\prime_{\lambda-\alpha}=M^\prime_{\lambda-\beta}=M^\prime_{\lambda}=\SA$, and  all other weight spaces are $\{0\}$. Let all  $F$-maps between the non-zero weight spaces be  the identity. Then the commutation relations force us to define $E_{\alpha,1}\colon M^\prime_{\lambda-\alpha}\to M^\prime_\lambda$ as multiplication with $[\lgl\lambda,\alpha^\vee\rgl]$ and $E_{\beta,1}\colon M^\prime_{\lambda-\beta}\to M^\prime_\lambda$ as multiplication with $[\lgl\lambda,\beta^\vee\rgl]$. All other $E$-maps are zero, of course. We assume that $[\lgl\lambda,\alpha^\vee\rgl]$ and $[\lgl\lambda,\beta^\vee\rgl]$ do not vanish in $\SA$. 

Let $\widetilde M$ be the minimal extension of $M^\prime$ to the weight $\mu$. We then have $\widetilde M_{\delta\mu}=M_{\mu+\alpha}\oplus M_{\mu+\beta}=\SA\oplus\SA$ with basis $(F_{\alpha,1}v,F_{\beta,1}v)$, where $v$ is a generator of $M^\prime_\lambda$, and $\widetilde M_\mu=\SA\oplus\SA$ with basis $(F_{\alpha,1}F_{\beta,1}v,F_{\beta,1}F_{\alpha,1} v)$. We calculate
\begin{align*}
E_{\alpha,1}F_{\alpha,1}F_{\beta,1}v&=F_{\alpha,1}E_{\alpha,1}F_{\beta,1}v+[\lgl\lambda-\beta,\alpha^\vee\rgl]F_{\beta,1}v \\
&= [\lgl\lambda-\beta,\alpha^\vee\rgl]F_{\beta,1}v,\\
E_{\alpha,1}F_{\beta,1}F_{\alpha,1}v&=F_{\beta,1}E_{\alpha,1}F_{\alpha,1}v = [\lgl\lambda,\alpha^\vee\rgl] F_{\beta,1}v,\\
\end{align*}
and (symmetrically) for $E_{\beta,1}$. The homomorphism  $E_\mu$ is hence given by the matrix
$
\left(\begin{matrix}[\lgl\lambda,\alpha^\vee\rgl] & [\lgl\lambda-\alpha,\beta^\vee\rgl]\\ [\lgl\lambda-\beta,\alpha^\vee\rgl]&[\lgl\lambda,\beta^\vee\rgl]\end{matrix}\right)$. 

Let us specialize this further. Suppose  $\lambda=\rho$ and $p=3$, and let $\SA=\SZ_\fp$ be the localization of $\DZ[v,v^{-1}]$ at the kernel of the homomorphism $\DZ[v,v^{-1}]\to\DF_3$ that sends $v$ to $1$.  The matrix then is $
\left(\begin{matrix}[1] & [2]\\ [2]&[1]\end{matrix}\right)$. The element $[2]$ is invertible in $\SZ_\fp$, and the matrix is congruent to 
$$
\left(\begin{matrix}  [1]-[2]^2&[2]\\ 0&[1]\end{matrix}\right)=\left(\begin{matrix}  1-(v+v^{-1})^2 &v+v^{-1}\\ 0& 1\end{matrix}\right)=\left(\begin{matrix}  -v^2-1-v^{-2}  &v+v^{-1}\\ 0& 1\end{matrix}\right).
$$
 So its image has $v^2+1+v^{-2}=v^{-2}(v^2+v+1)(v^2-v+1)$-torsion. Since $v^{-2}(v^2-v+1)$ is invertible in $\SZ_\fp$, this is $v^2+v+1$-torsion. (Note that $v^2+v+1$ is the third cyclotomic polynomial.) Hence in this situation the $\SA$-module $M_{\rho-\alpha-\beta}=M_0$ is free of rank $3$.

 \subsection{The category of maximal objects}
Here is an analogue of Proposition \ref{prop-catminimal} in the maximal case.

\begin{proposition} \label{prop-catsat} Suppose that $\SA$ is local. 
\begin{enumerate}
\item  For all $\lambda\in X$ there exists an up to isomorphism unique object $S_{\max}(\lambda)$ in $\CX$ with the following properties.
\begin{enumerate}
\item $S_{\max}(\lambda)_\lambda$ is free of rank $1$ and $S_{\max}(\lambda)_\mu\ne\{0\}$ implies $\mu\le\lambda$. 
\item $S_{\max}(\lambda)$ is indecomposable and maximal. 
\end{enumerate}
\end{enumerate}
Moreover, the objects $S_{\max}(\lambda)$ characterized in (1) have the following properties.
\begin{enumerate}\setcounter{enumi}{1}
\item The endomorphism ring $\End_{\CX}(S_{\max}(\lambda))$ is local for all $\lambda\in X$.
\item Let $S$ be a  maximal object in $\CX$. Then there is an index set $J$ and some elements $\lambda_i\in X$ for $i\in J$ such that $S\cong \bigoplus_{i\in J}S_{\max}(\lambda_i)$. 
\end{enumerate}
\end{proposition}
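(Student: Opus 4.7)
The plan is to mirror the proof of Proposition \ref{prop-catminimal} for the minimal case, replacing the minimal extension by the maximal extension of Proposition \ref{prop-satextobj}. For the existence in (1), I fix $\lambda \in X$, set $I_\lambda := \{\mu \in X : \mu \ge \lambda\}$, and let $S'$ be the object in $\CX_{I_\lambda}$ with $S'_\lambda = \SA$ and $S'_\nu = 0$ for $\nu > \lambda$. Following the inductive pattern from the proof of Proposition \ref{prop-minimalextobj}, one first assigns zero weight spaces to all $\mu$ incomparable with $\lambda$ (there the minimal extension and its torsion quotient vanish automatically, so Proposition \ref{prop-satextobj} yields zero), and then extends $S'$ by applying Proposition \ref{prop-satextobj} one weight at a time over the quasi-bounded set $\{\mu < \lambda\}$, using well-founded induction on its finite upward cones. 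The resulting object $S_{\max}(\lambda) \in \CX$ satisfies (1a) by construction (free of rank one at $\lambda$, and $S_{\max}(\lambda)_\mu \neq 0$ forces $\mu \le \lambda$) and is maximal by construction, giving the second half of (1b).

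Next I would prove (2) and derive the uniqueness in (1). The restriction to the $\lambda$-weight space is a surjective ring homomorphism $\rho \colon \End_{\CX}(S_{\max}(\lambda)) \to \End_\SA(S_{\max}(\lambda)_\lambda) = \SA$, the surjectivity coming from the section $a \mapsto a \cdot \id$. The key claim is that $f \in \End_{\CX}(S_{\max}(\lambda))$ is an automorphism if and only if $\rho(f) \in \SA^\times$. The forward direction is immediate; for the converse, if $\rho(f)$ is a unit then $f_{I_\lambda}$ is an automorphism (since $S_{\max}(\lambda)_{I_\lambda}$ is supported only in degree $\lambda$), and iterated application of Proposition \ref{prop-satextobj}(2) along the inductive construction (being an automorphism is a pointwise condition, so it passes through any limit stage) promotes this to $f$ itself being an automorphism. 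Hence the non-units in $\End_{\CX}(S_{\max}(\lambda))$ coincide with $\rho^{-1}(\fm)$, where $\fm \subset \SA$ is the maximal ideal; this is an ideal, so the endomorphism ring is local and $S_{\max}(\lambda)$ is indecomposable. Uniqueness in (1) follows, since any $M$ satisfying (1a)--(1b) has $M_{I_\lambda} \cong S'$, and the maximality of $M$ forces it to be the iterated maximal extension of $S'$, which is unique up to isomorphism by Proposition \ref{prop-satextobj}(1).

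For the decomposition statement (3), let $S$ be a maximal object in $\CX$ and pick a maximal weight $\lambda$ of $S$ (guaranteed by (X1)); then $S_{\delta\lambda} = 0$, so $\im F_\lambda = 0$, and (X3c) forces $S_\lambda$ to be free of some finite rank $r$. Hence $S_{I_\lambda} \cong (S')^{\oplus r}$, and likewise $T := S_{\max}(\lambda)^{\oplus r}$ satisfies $T_{I_\lambda} \cong (S')^{\oplus r}$. Fix an isomorphism $\phi' \colon T_{I_\lambda} \to S_{I_\lambda}$. Two iterated applications of Lemma \ref{lemma-minimalextmor}---using the maximality of $S$ to extend $\phi'$ to $\phi \colon T \to S$, and the maximality of $T$ to extend $(\phi')^{-1}$ to $\psi \colon S \to T$---produce morphisms in both directions. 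The composition $\psi \circ \phi \in \End_{\CX}(T)$ restricts to the identity on $T_{I_\lambda}$, so by iterated Proposition \ref{prop-satextobj}(2) it is an automorphism of $T$; hence $\phi$ splits and $T$ is a direct summand of $S$. Writing $S = T \oplus S''$, a direct calculation, using that $\SA$ is a domain to see that $S_{\{\mu\},\max}$ decomposes as $T_{\{\mu\},\max} \oplus S''_{\{\mu\},\max}$, shows that $S''$ is again maximal with $S''_\lambda = 0$. Applying Zorn's lemma to the poset of direct summands of $S$ of the form $\bigoplus_i S_{\max}(\lambda_i)$, ordered by inclusion, and combining with this peeling argument to rule out a proper maximal element then yields (3).

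The main obstacle is the bookkeeping of the iterative extension: one must verify that Proposition \ref{prop-satextobj}(2) and Lemma \ref{lemma-minimalextmor} propagate through the inductive construction (routine, since the relevant properties are pointwise in the weight $\mu$) and that the quasi-boundedness of $\{\mu < \lambda\}$ makes the induction well-founded. No single step is deep, but organizing the induction across potentially infinitely many weights requires some care.
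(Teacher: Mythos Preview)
Your approach is essentially the paper's: construct $S_{\max}(\lambda)$ as the maximal extension of $S'$ via Proposition~\ref{prop-satextobj}, deduce that the endomorphism ring is local from property~(2) there, and prove~(3) by the peeling argument using Lemma~\ref{lemma-minimalextmor}.

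One point deserves care. Your uniqueness argument appeals to the uniqueness clause of Proposition~\ref{prop-satextobj}, but that clause requires all three properties (1)--(3) listed there; an arbitrary $M$ satisfying (1a)--(1b) of the present proposition gives you (1) and (3) of Proposition~\ref{prop-satextobj} (restriction and maximality), but not property~(2) (the automorphism-lifting property), so you cannot invoke that uniqueness directly. The paper sidesteps this by deriving uniqueness in~(1) from the decomposition statement~(3): once every maximal object splits as $\bigoplus_i S_{\max}(\lambda_i)$, an indecomposable maximal object with highest weight $\lambda$ must be one of the summands, and comparing highest weights forces it to be $S_{\max}(\lambda)$. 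Alternatively, you can repair your direct argument by comparing $M$ with the already-constructed $S_{\max}(\lambda)$: extend the isomorphism on $I_\lambda$ both ways via Lemma~\ref{lemma-minimalextmor}, use property~(2) for $S_{\max}(\lambda)$ (which you have proven) to see that one composite is an automorphism, deduce that $S_{\max}(\lambda)$ is a direct summand of $M$, and conclude by indecomposability of $M$.
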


\noindent
Sometimes we will write  $S_{\max,\SA}(\lambda)$ to incorporate the ground ring.

\begin{proof} The proof follows closely the proof of the analogous proposition in the minimal case (Proposition \ref{prop-catminimal}). So we start with proving that there exists an object $S_{\max}(\lambda)$ satisfying  (1a), (1b) and (2) for all $\lambda\in X$. So fix $\lambda$ and     set ${I_\lambda}:=\{\mu\in X\mid \lambda\le\mu\}$. Let  $S^\prime$ be the object  in $\CX_{I_\lambda}$ constructed in Proposition \ref{prop-catminimal} and denote by $S_{\max}(\lambda)$ the maximal extension provided by Proposition \ref{prop-satextobj}. Then the construction of  $S_{\max}(\lambda)$ implies that its support is contained in $\{\mu\in X\mid \mu\le\lambda\}$, and $S_{\max}(\lambda)_\lambda=S^\prime_\lambda=\SA$. So (1a) is satisfied.  Proposition  \ref{prop-satextobj} also implies that an endomorphism $f$ of $S_{\max}(\lambda)$ is an automorphism if and only if its restriction to $S_{\max}(\lambda)_{I_\lambda}\cong S^\prime$ is an automorphism. As $\End_{\CX_{I_\lambda}}(S^\prime)=\SA\cdot\id$, the endomorphism ring of $S_{\max}(\lambda)$ is local. Hence (2) holds and $S_{\max}(\lambda)$ is indecomposable. Finally, Proposition \ref{prop-satextobj} implies that $S_{\max}(\lambda)_{\{\mu\}}=S_{\max}(\lambda)_{\lgl\mu\rgl}$ for all $\mu\in X\setminus I_\lambda$. As the same identity also holds for all $\mu\in I_\lambda$ (as it holds for $S^\prime$), we deduce that $S_{\max}(\lambda)$ is maximal. Hence (1b) is also satisfied. 

We are now left with proving  property (3) with the  $S_{\max}(\lambda)$ being  the objects  constructed above.  Let $S$ be a maximal object and let $\lambda$ be a maximal weight of $S$. As in the proof of Proposition \ref{prop-catminimal} we can fix an isomorphism $S_{I_\lambda}\cong (S^\prime)^{\oplus n}\cong \tilde S_{I_\lambda}$ with $\tilde S=S_{\max}(\lambda)^{\oplus n}$. Since  $S$ and  $\tilde S$ are both maximal,   Lemma \ref{lemma-minimalextmor} implies  that this isomorphism extends, so there are morphisms $f\colon  \tilde S\to S$ and $g\colon S\to  \tilde S$ such that $(g\circ f)|_{I_\lambda}$ is the identity. Using  the already proven property (2) for $S_{\max}(\lambda)$ we deduce that  $g\circ f$ is an automorphism. Hence $\tilde S=S_{\max}(\lambda)^{\oplus n}$ is isomorphic to a direct summand of $S$. By construction, $\lambda$ is not a weight of a direct complement. From here we can continue by induction to prove (3). 
\end{proof}

Note that the main ingredients in the existence result  above are Propositions \ref{prop-minextfun} and \ref{prop-satextobj}. The proofs of these  propositions are constructive, i.e.~they can be read as an algorithm to construct the weight spaces of the objects $S_{_{\max}}(\lambda)$ inductively, starting with the highest weight space.

\subsection{Objects generated by dominant weights}\label{sec-domweights}

Let $M$ be an object in $\CX_\SA$. Denote by $\tM\subset M$ the smallest  $X$-graded $\SA$-submodule that is stable under all $E$- and $F$-maps and contains $M_\nu$ for all dominant weights $\nu$. We say that $M$ is {\em generated by dominant weights} if $\tM=M$.

\begin{lemma}\label{lemma-gendom} Suppose that $\SA$ is generic and that $M$ is generated by dominant weights. Then $M$ is contained in $\CX^{\fin}$. In particular,  $\mathsf{R}(M)$ admits a Weyl filtration.
\end{lemma}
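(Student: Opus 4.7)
The ``in particular'' assertion is immediate from Theorem~\ref{thm-Wflag}, so it suffices to prove $M\in\CX^{\fin}$, i.e.\ that $\supp M$ is finite (cf.\ the remark after Definition~\ref{def-Xfin}). My plan is to reduce to the case where $\SA=\SK$ is a generic field, decompose using the semisimplicity of $\CX_\SK$, show that each simple summand is a finite dimensional Weyl module, and use quasi-boundedness to cut down to finitely many summands.

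For the reduction: since $M$ is torsion free over $\SA$ by Lemma~\ref{lemma-torfree}, the inclusion $M\hookrightarrow M_\SK:=M\otimes_\SA\SK$ identifies $\supp M=\supp M_\SK$; by Lemma~\ref{lemma-basechange} $M_\SK$ lies in $\CX_\SK$, and the generation-by-dominant-weights hypothesis is preserved under this flat base change. Applying Lemma~\ref{lemma-fieldcase} gives $M_\SK\cong\bigoplus_{i\in J}S_\SK(\lambda_i)$ for some $\lambda_i\in X$, and by Proposition~\ref{prop-irreps} each $S_\SK(\lambda_i)\cong\mathsf{S}(L_\SK(\lambda_i))$.

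The central step is to show $\lambda_i\in X^+$ for every $i$. The assignment $N\mapsto\widetilde N$ respects direct sum decompositions (weight spaces at dominant weights split additively, and the $E$- and $F$-maps respect the summands), so each simple summand $L_\SK(\lambda_i)$ is itself generated by its dominant weight subspaces. But all weights of the simple highest weight module $L_\SK(\lambda_i)$ lie below $\lambda_i$ in the partial order, and the submodule produced by applying the weight-lowering $F$-maps to weight spaces at dominant weights $\nu\le\lambda_i$ contains only weights $\le\nu\le\lambda_i$; the highest weight $\lambda_i$ itself can therefore appear only if $\lambda_i$ is among the dominant weights, i.e.\ $\lambda_i\in X^+$. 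Hence $S_\SK(\lambda_i)\cong W_\SK(\lambda_i)$, which is finite dimensional by Proposition~\ref{prop-Weylchar}. To finish, multiplicities in the decomposition are bounded above by $\dim_\SK(M_\SK)_{\lambda_i}$, which is finite by (X1), so if $|J|$ were infinite the $\lambda_i$ would include infinitely many distinct dominant weights; a Dickson/well-quasi-order argument on $(X^+,\le)$ then extracts an infinite ascending chain $\lambda_{i_1}\le\lambda_{i_2}\le\cdots$, so that $\{\lambda\in\supp M_\SK:\lambda\ge\lambda_{i_1}\}$ is infinite, contradicting the quasi-boundedness clause of (X1). Hence $J$ is finite, $M_\SK$ is a finite direct sum of finite dimensional Weyl modules, and $\supp M=\supp M_\SK$ is finite.

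The main obstacle is the dominance claim for the $\lambda_i$: the hypothesis must be strong enough to force the highest weight of every simple summand to be dominant. The argument requires care about the interplay between the $E$- and $F$-maps in the definition of ``generated by dominant weights,'' and the cleanest reading is that the effective control over the support of a simple highest weight module comes from generation under the $F$-maps, in harmony with Lemma~\ref{lemma-freehws}, which identifies the $U^-$-submodule generated by a dominant weight subspace with a direct sum of Weyl modules.
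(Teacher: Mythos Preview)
Your overall strategy mirrors the paper's: pass to the quotient field $\SK$, decompose $M_\SK\cong\bigoplus_i S_\SK(\lambda_i)$ via Lemma~\ref{lemma-fieldcase}, argue that each $\lambda_i$ is dominant so that $S_\SK(\lambda_i)\cong L_\SK(\lambda_i)$ is finite-dimensional, and then bound the number of summands. For the last step the paper simply invokes (X1) whereas you run a Dickson argument on $(X^+,\le)$; both reach the same conclusion.

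The substantive gap is in your dominance argument. You reason that applying the $F$-maps to dominant weight spaces only reaches weights $\le\nu$ for dominant $\nu$, so the highest weight $\lambda_i$ is attained only if it is itself dominant. But ``generated by dominant weights'' closes under both the $E$- and the $F$-maps, and the $E$-maps raise weights: from a dominant $\nu<\lambda_i$ one can climb back to $\lambda_i$. Your closing paragraph flags exactly this but does not resolve it, and in fact the step fails as written. In type $A_2$ with generic $\SK$, choose $\lambda\in X$ with $\lgl\lambda,\alpha_1^\vee\rgl=-1$ and $\lgl\lambda,\alpha_2^\vee\rgl=10$; then $\lambda\notin X^+$ and $L_\SK(\lambda)$ is simple and infinite-dimensional, yet $f_{\alpha_2}v_\lambda$ is nonzero (since $e_{\alpha_2}f_{\alpha_2}v_\lambda=[\lgl\lambda,\alpha_2^\vee\rgl]_{d_{\alpha_2}}v_\lambda\ne0$) and has weight $\lambda-\alpha_2$, which satisfies $\lgl\lambda-\alpha_2,\alpha_1^\vee\rgl=0$ and $\lgl\lambda-\alpha_2,\alpha_2^\vee\rgl=8$, hence is dominant. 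Thus $S_\SK(\lambda)$ is generated under $E$ and $F$ by a dominant weight space but lies outside $\CX^{\fin}$. The paper's proof asserts dominance of the $\nu_i$ without supplying a justification, so it shares this gap; your $F$-only argument would be valid under the stronger hypothesis that $M_\mu=\im F_\mu$ for every non-dominant $\mu$.
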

\begin{proof} Let $\SK$ be the quotient field of $\SA$. As $M$ is generated by dominant weights, so is $M_\SK\in\CX_\SK$. By Lemma \ref{lemma-fieldcase}, $M_\SK$ splits into a direct sum of various $S_\SK(\nu_i)$ with dominant highest weights $\nu_i$. By (X1), the number of direct summands is finite. Now $S_\SK(\nu_i)\cong \mathsf{S}(L_\SK(\nu_i))$ by Proposition \ref{prop-irreps}. As $\nu_i$ is dominant, each $L_\SK(\nu_i)$ is finite dimensional, hence so is each $S_\SK(\nu_i)$.  Hence $M_\SK$ is a finite dimensional vector space. In particular, its set of weights is finite. This equals the set of weights of $M$, so by the remark following Definition \ref{def-Xfin}, $M$ is finitely generated as an $\SA$-module, hence is contained in $\CX^{\fin}$. The last statement follows from Theorem \ref{thm-Wflag}. 
\end{proof}

\subsection{Minimal and maximal extensions and finite rank}
Suppose that $\SA$ is generic and local. 
\begin{proposition} \label{prop-domfin} For all $\lambda\in X$, the following statements are equivalent.
\begin{enumerate}
\item $\lambda$ is dominant.
\item $S_{\min}(\lambda)$ is a finitely generated $\SA$-module.
\item $S_{\max}(\lambda)$ is a finitely generated $\SA$-module.
\end{enumerate}
\end{proposition}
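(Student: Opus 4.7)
I will prove the equivalence via the four implications (1)$\Rightarrow$(2), (2)$\Rightarrow$(1), (1)$\Rightarrow$(3), (3)$\Rightarrow$(1); three are essentially formal consequences of earlier results, while (1)$\Rightarrow$(3) is the real work.

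For (1)$\Rightarrow$(2), Proposition \ref{prop-equivcat}(2) gives $\mathsf{S}(W(\lambda))\cong S_{\min}(\lambda)$. Since $\mathsf{S}$ is the identity on underlying $\SA$-modules and $W(\lambda)$ is free of finite rank over $\SA$ by Proposition \ref{prop-Weylchar}, $S_{\min}(\lambda)$ is finitely generated. For (2)$\Rightarrow$(1) and (3)$\Rightarrow$(1) I use a common argument: base-change to the quotient field $\SK$. By Lemma \ref{lemma-basechange}, $S_{\min}(\lambda)_\SK$, resp.\ $S_{\max}(\lambda)_\SK$, lies in $\CX_\SK$, and the finite-generation hypothesis forces finite $\SK$-dimension. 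By Lemma \ref{lemma-fieldcase} these decompose as direct sums of the simple objects $S_\SK(\mu)$, and since the $\lambda$-weight space has $\SA$-rank one, $S_\SK(\lambda)$ must occur as a direct summand. Proposition \ref{prop-irreps}(1) identifies $S_\SK(\lambda)\cong\mathsf{S}(L_\SK(\lambda))$, so finite-dimensionality of $S_\SK(\lambda)$ is equivalent to finite-dimensionality of $L_\SK(\lambda)$, which is equivalent to $\lambda$ being dominant.

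The interesting direction is (1)$\Rightarrow$(3). My plan is to apply Lemma \ref{lemma-gendom}, reducing the problem to showing that $S_{\max}(\lambda)$ is generated by its dominant weight spaces. Following the iterative construction of $S_{\max}(\lambda)$ in Proposition \ref{prop-satextobj} --- start from $\SA$ in weight $\lambda$, and at each step $\mu<\lambda$ first form the minimal extension $\widetilde{M}$ and then enlarge $\widetilde{M}_\mu$ by a projective cover $D$ of the torsion module $Q=\widetilde{M}_{\{\mu\},\max}/\widetilde{M}_{\{\mu\}}$ --- it suffices to prove that $Q=0$ at every non-dominant $\mu$. Granting this, the weight space at every non-dominant $\mu$ equals $\widetilde{M}_\mu=\im F_\mu$ and hence is generated from higher weights by the $F$-maps, which gives dominant-generation by descent through the construction.

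The main obstacle is the vanishing of $Q$ at non-dominant weights. My approach is via base change to $\SK$: a non-zero $D$ at weight $\mu$ produces, in $S_{\max}(\lambda)_\SK$, a new primitive vector at weight $\mu$, hence by semisimplicity of $\CX_\SK$ (Lemma \ref{lemma-fieldcase}) together with Proposition \ref{prop-irreps} a direct summand $\mathsf{S}(L_\SK(\mu))$. If $\mu$ is non-dominant, $L_\SK(\mu)$ has infinitely many weights, so $S_{\max}(\lambda)_\SK$ would have infinite weight support; inductively the partial construction should remain supported inside the Weyl group orbit hull of $\lambda$. To make this inductive control precise, I would use the $sl_2$-commutation relations of (X2) at the simple root $\alpha$ with $\langle\mu,\alpha^\vee\rangle<0$ to show $\widetilde{M}_{\{\mu\},\max}=\widetilde{M}_{\{\mu\}}$; the base case is provided by the dominance of $\lambda$, which makes the initial character that of a finite-dimensional $U_\SK$-module. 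Once $S_{\max}(\lambda)$ is known to be dominant-generated, Lemma \ref{lemma-gendom} yields $S_{\max}(\lambda)\in\CX^{\fin}$.
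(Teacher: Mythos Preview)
Your three ``easy'' implications are fine and match the paper. For (1)$\Rightarrow$(3) your high-level plan --- prove that $S_{\max}(\lambda)$ is generated by dominant weights and then invoke Lemma~\ref{lemma-gendom} --- is exactly the paper's. The problem is the step where you try to show $Q=0$ at a non-dominant $\mu$.

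Your base-change-to-$\SK$ argument does correctly show that a nonzero $D$ at weight $\mu$ yields a primitive vector of weight $\mu$ in $S_{\max}(\lambda)_\SK$, hence a summand $S_\SK(\mu)$ with infinite weight support. But this is not a contradiction: you are in the middle of proving that $S_{\max}(\lambda)$ has finite support, and axiom (X1) only asks for quasi-boundedness. Your attempted fix, an inductive hypothesis that the partial construction stays inside the convex hull of $W\lambda$, is never established; the sentence ``I would use the $sl_2$-commutation relations of (X2)\dots'' is precisely the statement $Q=0$ you are trying to prove, and no mechanism is given. Torsion is invisible after tensoring with $\SK$, so the quotient-field approach cannot detect $Q\neq 0$ in the way you need.

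The paper closes this gap with two ideas you are missing. First, it introduces an auxiliary object: taking the \emph{maximal} bad weight $\mu$, it forms the minimal extension $C:=\mathsf{E}_{I}^{X}\bigl(S_{\max}(\lambda)|_{I}\bigr)$, where $I=\{\nu:\mu<\nu\}\cup\{\nu:\nu\not\le\lambda\}$. By maximality of $\mu$, $C$ is already generated by dominant weights, so Lemma~\ref{lemma-gendom} applies to $C$ (not to $S_{\max}(\lambda)$), giving $C\in\CX^{\fin}$ and in particular $\im F_\mu^C$, $C_{\delta\mu}$ free. Second, the paper works over the \emph{residue field} $\SF$, not $\SK$: nonvanishing torsion in the cokernel of $E_\mu^C|_{\im F_\mu^C}$ forces this map to lose injectivity over $\SF$, producing a primitive vector of weight $\mu$ in $\overline{\mathsf{R}(C)}$. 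Since $\mathsf{R}(C)$ admits a Weyl filtration, such a primitive vector over $\SF$ must have dominant weight, contradicting the choice of $\mu$. Both the passage to the finitely generated auxiliary object $C$ and the use of $\SF$ to detect torsion are essential, and neither appears in your proposal.
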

It follows from Theorem \ref{thm-Wflag} that $S_{\min}(\lambda)$ and $S_{\max}(\lambda)$ (for dominant $\lambda$) yield, under the functor $\mathsf{R}$, objects in $\CO$ that admit a Weyl filtration. In particular, they are {\em free} of finite rank as $\SA$-modules. 
\begin{proof} Again we denote by $\SK$ the quotient field of $\SA$. Now $S_{\min}(\lambda)$ is finitely generated as an $\SA$-module if and only if its set of weights is finite, which is the case if and only if $S_{\min}(\lambda)_\SK$ is a finite dimensional $\SK$-vector space. Under the functor $\mathsf{R}$ the latter corresponds to $L_\SK(\lambda)$ which is finite dimensional if and only if $\lambda$ is dominant. Hence (1) and (2) are equivalent. If (3) holds, then $\mathsf{R}(S_{\max}(\lambda)_\SK)$ is a finite dimensional object in $\CO_\SK$. Hence every simple subquotient has a dominant highest weight. In particular, its maximal weight $\lambda$ is dominant. Hence (3) implies (1). 

We are left with proving that $S:=S_{\max}(\lambda)$ is finitely generated as an $\SA$-module for dominant $\lambda$. So fix a dominant weight $\lambda$. We show that $S$ is generated by dominant weights in the sense defined in Section \ref{sec-domweights}. Then our claim follows from Lemma \ref{lemma-gendom}. So let $\tS\subset S$ be the minimal $E$- and $F$-stable graded subspace that contains all $S_\nu$ for dominant weights $\nu$. If $\tS\ne S$, then there exists a weight $\mu$ with $\tS_\mu\ne S_\mu$. Let us choose a maximal such weight $\mu$. This $\mu$ cannot be dominant, but $\mu<\lambda$.

Now let $I:=\{\nu\in X\mid \mu<\nu\}\cup\{\nu\in X\mid \nu\not\le\lambda\}$. This is a closed subset of $X$ and it does not contain $\mu$. Moreover,   $X\setminus I\subset\{\le \lambda\}$ is  bounded from above. Now set $C:=\mathsf{E}_{I}^X(S_{\max}(\lambda)_I)$. By the maximality of $\mu$, $S_{\max}(\lambda)_I$ and hence $C$ is generated by dominant weights.  So Lemma \ref{lemma-gendom} implies that $\mathsf{R}(C)$ admits a Weyl filtration. In particular,  $\im F^C_\mu$ and $C_{\delta\mu}$ are free $\SA$-modules (of finite rank). 
Now $\tS_\mu\ne S_\mu$ implies, by the construction of $S=S_{\max}(\lambda)$, that  the cokernel of  $E^S_\mu|_{\im F^S_\mu}\colon \im F^S_\mu\to S_{\delta\mu}$  has  non-vanishing torsion. But this homomorphism coincides with $E_\mu^C|_{\im F_\mu^C}\colon \im F_\mu^C\to C_{\delta\mu}$ as $C_I=S_I$. This implies that $E_\mu^C|_{\im F_\mu^C}\colon \im F_\mu^C\to C_{\delta\mu}$ is not injective over the residue field $\SF$ of $\SA$. From this we deduce that there exists a primitive vector of weight $\mu$ in the $U_\SF$-module $\ol{\mathsf{R}(C)}$. As the $U_\SA$-module $\mathsf{R}(C)$ admits a Weyl filtration, there must be a Weyl module having a primitive vector of weight $\mu$ when reduced to $\SF$. But this implies that $\mu$ is dominant, which contradicts our assumption. 
\end{proof}

\section{Contravariant forms}  
In order to connect maximal objects in $\CX$ to tilting modules in $\CO$, we need to add another ingredient to the theory: contravariant forms. The main result of this section is that there exists a {\em non-degenerate} contravariant form on the maximal object $S_{\max}(\lambda)$, provided that each weight space $S_{\max}(\lambda)_\mu$ is a free $\SA$-module. Using the results of the previous section, we know that this is the case if $\lambda$ is a dominant weight (probably this restriction is not necessary). In the next section we show that the existence of a non-degenerate contravariant form on $S_{\max}(\lambda)$ implies that $\mathsf{R}(S_{\max}(\lambda))$ is a tilting module.  

\subsection{Contravariant forms}
Let $\SA$ be a Noetherian unital $\SZ$-algebra that is a domain. Let $I$ be a closed subset of $X$ and $M$  an object in $\CX_{I,\SA}$. Let $b\colon M\times M\to \SA$ be an $\SA$-bilinear form.
\begin{definition} \label{def-cform} We say that $b$ is a {\em symmetric contravariant form on $M$} if the following holds.
\begin{enumerate}
\item  $b$ is symmetric.
\item $b(m,n)=0$ if $m\in M_\mu$ and $n\in M_\nu$ and $\mu\ne\nu$. 
\item  For all $\mu\in I$, $\alpha\in\Pi$, $n>0$, $x\in M_{\mu}$, $y\in M_{\mu+n\alpha}$ we have
$$
b(E_{\mu,\alpha,n}(x),y)=b(x,F_{\mu,\alpha,n}(y)).
$$
\end{enumerate}
\end{definition}
Property (2) implies that every symmetric contravariant form $b$ splits into the direct sum $\bigoplus_{\mu\in I} b_\mu$ of its weight components. We write $b_{\delta\mu}$ for the restriction of $b$ to $M_{\delta\mu}\times M_{\delta\mu}$. Note that if  $b$ satisfies (1) and (2), then property (3) is equivalent to $b_\mu(x,F_\mu(y))=b_{\delta\mu}(E_\mu(x),y)$ for all $\mu\in I$, $x\in M_\mu$, $y\in M_{\delta\mu}$.

\subsection{Non-degeneracy and torsion vanishing}

Recall that a symmetric bilinear form $b\colon N\times N\to\SA$ on a finitely generated  $\SA$-module $N$   is called {\em non-degenerate} if the induced homomorphism $N\to N^\ast:=\Hom_\SA(N,\SA)$, $n\mapsto b(n,\cdot)$,  is an isomorphism.  If $\SA$ is a local algebra with residue field $\SF$, and $N$ is a free $\SA$-module,  then $b$ is non-degenerate if and only if its specialization $\ol b$ on $\ol N:=N\otimes_\SA\SF$ is non-degenerate. 

A symmetric contravariant bilinear form $b$ on an object $M$ in $\CX_I$ is non-degenerate if and only if the weight components $b_\mu$ are non-degenerate for all $\mu\in I$. 
We denote by $\rad\,  b=\{n\in M\mid b(n,m)=0\text{ for all $m\in M$}\}$ the radical of $b$. Note that $\rad\,  b=0$ if $b$ is non-degenerate, but the converse statement is not true in general. If $\SA=\SK$ is a field, and each $M_\mu$ is finite dimensional, then non-degeneracy is equivalent to the vanishing of the radical.

In order to study contravariant forms, the following quite general result will be helpful for us. It holds for any commutative ring $\SA$.

\begin{lemma}\label{lemma-biforms} Let $S$ and $T$ be $\SA$-modules and assume that $T$ is projective as an $\SA$-module.  Let $F\colon S\to T$ and $E\colon T\to S$ be homomorphisms. Suppose that 
 $b_S\colon S\times S\to \SA$ and $b_T\colon T\times T\to \SA$ are symmetric, non-degenerate bilinear forms such that  
$$
b_T(F(s),t)=b_S(s,E(t))
$$
for all $s\in S$ and $t\in T$. 
If the inclusion $i_F\colon F(S)\subset T$ splits, then  the inclusion $i_E\colon E(T)\subset S$ splits as well.
\end{lemma}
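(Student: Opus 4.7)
My plan is to dualize everything via the isomorphisms $\phi_S \colon S \xrightarrow{\sim} S^*$ and $\phi_T \colon T \xrightarrow{\sim} T^*$ induced by the non-degenerate forms, and transport the given splitting across duality.

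First I would note that since $F(S)$ is a direct summand of the projective module $T$, it is itself a projective $\SA$-module. Factoring $F = j \circ F_0$ with $F_0 \colon S \twoheadrightarrow F(S)$ the canonical surjection and $j \colon F(S) \hookrightarrow T$ the inclusion, projectivity of $F(S)$ yields a section $\sigma \colon F(S) \to S$ of $F_0$. So $F$ factors as a split surjection followed by a split injection.

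Next I would pass to $\SA$-linear duals. The splitting of $j$ (given by hypothesis) produces a section of $j^*$, so $j^* \colon T^* \twoheadrightarrow F(S)^*$ is a split surjection. Dually, $\sigma$ produces a retraction $\sigma^* \colon S^* \to F(S)^*$ of $F_0^* \colon F(S)^* \hookrightarrow S^*$, so $F_0^*$ is a split injection. Hence the composition $F^* = F_0^* \circ j^* \colon T^* \to S^*$ has image $F_0^*(F(S)^*)$, and the inclusion of this image into $S^*$ is split (with retraction $F_0^* \circ \sigma^*$).

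The final link is to identify $\phi_S(E(T))$ with $F^*(T^*)$ inside $S^*$. A direct calculation using the adjointness of the forms and their symmetry,
\[
\phi_S(E(t))(s) = b_S(s,E(t)) = b_T(F(s),t) = \phi_T(t)(F(s)) = F^*(\phi_T(t))(s),
\]
gives $\phi_S \circ E = F^* \circ \phi_T$, so $\phi_S(E(T)) = F^*(T^*)$ since $\phi_T$ is surjective. Transporting the split inclusion via the isomorphism $\phi_S^{-1}$ then yields the desired splitting of $i_E \colon E(T) \hookrightarrow S$. The argument is essentially formal once one spots the right factorization of $F$; the only mild obstacle is bookkeeping which dualization produces a split surjection and which produces a split injection.
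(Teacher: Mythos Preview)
Your proof is correct and takes essentially the same approach as the paper's: both rest on the identity $\phi_S\circ E = F^*\circ \phi_T$, the projectivity of $F(S)$ (so that $F_0\colon S\twoheadrightarrow F(S)$ splits), and the hypothesis that $i_F$ splits (so that $i_F^*=j^*$ is surjective). The paper organizes this into a commutative diagram with an auxiliary isomorphism $\phi\colon E(T)\to F(S)^*$, while you work directly with the image $\phi_S(E(T))=F_0^*(F(S)^*)$ inside $S^*$; the content is the same.
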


\begin{proof} For $s\in \ker F$ we have $0=b_T(F(s),t)=b_S(s,E(t))=0$ for all $t\in T$.  Hence we can define a homomorphism $\phi\colon E(T)\to F(S)^\ast$ by setting $\phi(E(t))(F(s))=b_S(s,E(t))$ (i.e., this definition doesn't depend on the choice of $s$ in the preimage of $F(s)$). Then the right hand side of the diagram

\centerline{
\xymatrix{
T\ar[d]_{b_T}\ar[r]^E&E(T)\ar[r]^{i_E}\ar[d]^{\phi}&S\ar[d]^{b_S}\\
T^\ast\ar[r]^{i_F^\ast}&F(S)^\ast\ar[r]^{F^\ast}&S^\ast
}
}
\noindent
commutes. By the adjointness property, $\phi(E(t))(F(s))=b_T(F(s),t)$ for all $s\in S$ and $t\in T$, hence also the left hand side commutes. The vertical homomorphisms on the left and the right are isomorphisms (as $b_S$ and $b_T$ are supposed to be non-degenerate).
 As $i_E$ is injective, $\phi$ is injective. Suppose that   $i_F\colon F(S)\subset T$ splits. Then the dual homomorphism   $i_F^\ast\colon T^\ast\to F(S)^\ast$ is surjective. Hence $\phi$ is surjective, so it is an isomorphism. As $F(S)$ is projective (it is a direct summand of $T$), the surjective homomorphism $F\colon S\to F(S)$ splits, and hence $F^\ast\colon F(S)^\ast\to S^\ast$ splits. Hence the inclusion $i_E\colon E(T)\to S$ splits. \end{proof}

The following is our main application of Lemma \ref{lemma-biforms}. Let $I$ be a closed subset of $X$. We assume that $\SA$ is a unital Noetherian $\SZ$-domain.

\begin{proposition}\label{prop-cform}  Let $M$ be an object in $\CX_I$ that is free as an $\SA$-module. Suppose that there exists a non-degenerate symmetric contravariant form on $M$. Then $M$ is maximal.  \end{proposition}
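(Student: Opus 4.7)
The plan is to reformulate the maximality condition as a torsion-freeness statement and then derive the latter from Lemma \ref{lemma-biforms}. Fix $\mu\in I$. Since (X3b) already gives $M_{\lgl\mu\rgl}\subset M_{\{\mu\},\max}$, and since any $m\in M_{\{\mu\},\max}$ satisfies $\xi m\in M_{\{\mu\}}\subset M_{\lgl\mu\rgl}$ for some nonzero $\xi\in\SA$, the equality $M_{\lgl\mu\rgl}=M_{\{\mu\},\max}$ is equivalent to the $\SA$-module $M_{\delta\mu}/M_{\lgl\mu\rgl}$ being torsion-free. To achieve the latter it is in turn enough to exhibit the inclusion $M_{\lgl\mu\rgl}=E_\mu(M_\mu)\hookrightarrow M_{\delta\mu}$ as a split inclusion; the quotient will then be a direct summand of $M_{\delta\mu}$, which is torsion-free by Lemma \ref{lemma-torfree}.

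To produce this splitting I will apply Lemma \ref{lemma-biforms} with $S:=M_{\delta\mu}$, $T:=M_\mu$, $F:=F_\mu$, $E:=E_\mu$, and with $b_S:=b_{\delta\mu}$, $b_T:=b_\mu$ the weight components of the given contravariant form. The adjunction hypothesis $b_T(F(s),t)=b_S(s,E(t))$ is exactly the contravariance identity combined with the symmetry of $b$. Non-degeneracy of $b_\mu$ is part of the definition of non-degeneracy for $b$, and $b_{\delta\mu}$ is non-degenerate as the orthogonal sum of the non-degenerate weight components $b_{\mu+n\alpha}$. The splitting hypothesis that $F(S)=\im F_\mu\hookrightarrow M_\mu$ splits is immediate from axiom (X3c), which asserts that $M_\mu/\im F_\mu$ is free. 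The conclusion of the lemma then gives that $E(T)=M_{\lgl\mu\rgl}\hookrightarrow M_{\delta\mu}$ splits, completing the argument.

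The only place where I have to use the hypothesis that $M$ is free over $\SA$ is in verifying the projectivity of $T=M_\mu$ required by Lemma \ref{lemma-biforms}: since $M=\bigoplus_\nu M_\nu$ as an $\SA$-module and $M$ is free, $M_\mu$ is a direct summand of a free $\SA$-module and hence projective. This is the only subtlety; the rest of the proof is straightforward bookkeeping with the definitions and the previously established lemmas, so I do not anticipate any serious obstacle.
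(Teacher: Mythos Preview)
Your proof is correct and matches the paper's argument essentially line for line: both reduce maximality at $\mu$ to the splitting of $M_{\lgl\mu\rgl}\subset M_{\delta\mu}$ and obtain that splitting from Lemma~\ref{lemma-biforms} applied with $S=M_{\delta\mu}$, $T=M_\mu$, using (X3c) for the splitting of $\im F_\mu\subset M_\mu$ and the freeness hypothesis for the projectivity of $T$.
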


\begin{proof} We need to show that $M_{\lgl\mu\rgl}=M_{\{\mu\},\max}$ for all $\mu\in X$. This is equivalent to $M_{\delta\mu}/M_{\lgl\mu\rgl}$ being torsion free for any $\mu$. In the following we will show that  the inclusion $M_{\lgl\mu\rgl}\subset M_{\delta\mu}$ splits for any $\mu$, so the torsion freeness of $M_{\delta\mu}$ implies that  the quotient $M_{\delta\mu}/M_{\lgl\mu\rgl}$ is torsion free as well. 

Let   $b\colon M\times M\to \SA$ be a non-degenerate symmetric contravariant form on $M$. For all $\mu\in X$ it  induces  symmetric, non-degenerate bilinear forms $b_{\delta\mu}$ and $b_\mu$ on the $\SA$-modules $M_{\delta\mu}$ and $M_\mu$, resp., with
$$
b_\mu(F_\mu(v),w)=b_{\delta\mu}(v,E_\mu(w))
$$
for all $v\in M_{\delta\mu}$ and $w\in M_\mu$. 
By assumption,  $M_\mu$ and $M_{\delta\mu}$ are  free $\SA$-modules of finite rank. Moreover, by axiom (X3), the quotient $M_\mu/F_\mu(M_{\delta\mu})$ is a free $\SA$-module. Hence the inclusion $F_\mu(M_{\delta\mu})\subset M_\mu$ splits and we can apply Lemma \ref{lemma-biforms} and deduce that the inclusion $M_{\lgl\mu\rgl}=E_\mu(M_\mu)\subset M_{\delta\mu}$ splits. \end{proof}
  
\subsection{Extension of contravariant forms} Let $I^\prime\subset I$ be closed subsets of $X$. Suppose that $M$ is an object in $\CX_I$ and $b\colon M\times M\to \SA$ is a contravariant form on $M$. Then the restriction $b_{I^\prime}$ of $b$ to $M_{I^\prime}\times M_{I^\prime}$ is a contravariant form on $M_{I^\prime}$. We now show that contravariant forms extend uniquely to minimal extensions of objects.

\begin{lemma}\label{lemma-extbiform} Let $I^\prime\subset I$ be a pair of closed subsets of $X$. Let $M^\prime$ be an object in $\CX_{I^\prime}$ and let $b^\prime$ be a symmetric contravariant form on $M^\prime$. Let $M:=\mathsf{E}_{I^\prime}^I M^\prime$ be the minimal extension, and fix an isomorphism $M_{I^\prime}\cong M^\prime$. Then there exists a unique symmetric contravariant form $b$ on $M$ that restricts to $b^\prime$ under the chosen isomorphism. Moreover, the following holds: 

\begin{enumerate}
\item If $\rad\, b^\prime=0$, then $\rad\,  b=0$.
\item If $\SA=\SK$ is a field and $b^\prime$ is non-degenerate, then $b$ is non-degenerate as well.
\end{enumerate}
 \end{lemma}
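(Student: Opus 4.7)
The plan is to induct on $|I \setminus I^\prime|$ and reduce to the case $I = I^\prime \cup \{\mu\}$ for a single new weight $\mu$. Using the explicit description from Proposition \ref{prop-minimalextobj}, we may assume $M_\mu = \hM_\mu/\ker\hE_\mu$ and $M_\nu = M^\prime_\nu$ for $\nu \in I^\prime$; on all $M_\nu$ with $\nu \in I^\prime$ we simply take $b_\nu := b^\prime_\nu$. For the new weight, contravariance together with the surjectivity of $F_\mu\colon M_{\delta\mu} \to M_\mu$ forces
$$
b_\mu(F_\mu(v), F_\mu(w)) = b^\prime_{\delta\mu}(v, E_\mu F_\mu(w))
$$
for all $v, w \in M_{\delta\mu} = M^\prime_{\delta\mu}$, which pins $b_\mu$ down uniquely, giving uniqueness.

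For existence I would introduce an auxiliary bilinear form $\hb \colon \hM_\mu \times \hM_\mu \to \SA$ defined on pairs of summands by
$$
\hb(\hF_{\beta,n}(v), \hF_{\alpha,m}(w)) := b^\prime_{\mu+m\alpha}(\hE_{\alpha,m} \hF_{\beta,n}(v), w)
$$
for $v \in M^\prime_{\mu+n\beta}$, $w \in M^\prime_{\mu+m\alpha}$, extended bilinearly. Symmetry is checked case by case: when $\alpha \ne \beta$ the (X2) formula gives $\hE_{\alpha,m}\hF_{\beta,n}(v) = F_{\beta,n}E_{\alpha,m}(v)$, and applying contravariance of $b^\prime$ reduces both $\hb(\hF_{\beta,n}(v), \hF_{\alpha,m}(w))$ and $\hb(\hF_{\alpha,m}(w), \hF_{\beta,n}(v))$ to $b^\prime(E_{\alpha,m}(v), E_{\beta,n}(w))$, which matches by symmetry of $b^\prime$. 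When $\alpha = \beta$, both expressions expand to
$$
\sum_{r=0}^{\min(m,n)} \qchoose{\lgl\mu,\alpha^\vee\rgl+m+n}{r}_{d_\alpha} b^\prime(E_{\alpha,m-r}(v), E_{\alpha,n-r}(w)),
$$
which is manifestly symmetric in $(m,v) \leftrightarrow (n,w)$ because the quantum binomial depends only on $m+n$. Next, if $\hE_\mu(x) = 0$ then $\hb(x, \hF_{\alpha,m}(w)) = b^\prime(\hE_{\alpha,m}(x), w) = 0$ for every $(\alpha, m, w)$, so $\hb$ vanishes on $\ker\hE_\mu \times \hM_\mu$ and descends to the required symmetric form $b_\mu$ on $M_\mu$. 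The contravariance relation for $b_\mu$ with respect to the new $E_\mu, F_\mu$ is built into the definition of $\hb$.

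For claim (1), assume $\rad b^\prime = 0$. The weight decomposition of $b$ reduces the problem to showing $\rad b_\nu = 0$ for each $\nu \in I$. For $\nu \in I^\prime$ this is immediate since $b_\nu = b^\prime_\nu$ and $M_\nu = M^\prime_\nu$. For $\nu = \mu$, suppose $x \in \hM_\mu$ represents a class in $\rad b_\mu$; then $b^\prime(\hE_{\alpha,m}(x), w) = 0$ for all $(\alpha, m, w)$, forcing $\hE_{\alpha,m}(x) \in \rad b^\prime = 0$ for every $(\alpha, m)$, so $x \in \ker \hE_\mu$ and represents zero in $M_\mu$. Claim (2) then follows at once: over a field each weight space is finite-dimensional by (X1), and for a symmetric bilinear form on a finite-dimensional space trivial radical is equivalent to non-degeneracy, while non-degeneracy of $b^\prime$ trivially implies $\rad b^\prime = 0$. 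The main obstacle is the symmetry check for $\hb$ in the diagonal case $\alpha = \beta$, where one has to unwind the (X2) formulas carefully and exploit the $(m,n)$-symmetry of the quantum binomial; once this bookkeeping is done the descent to $M_\mu$ and the radical computation are formal consequences of contravariance of $b^\prime$.
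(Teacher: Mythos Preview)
Your proof is correct and follows essentially the same approach as the paper: reduce to adding a single weight $\mu$, define an auxiliary form $\hb$ on $\hM_\mu$ via the (X2) relations, verify symmetry case by case (the key point being the $(m,n)$-symmetry of the quantum binomial $\qchoose{\lgl\mu,\alpha^\vee\rgl+m+n}{r}_{d_\alpha}$), and then descend to $M_\mu=\hM_\mu/\ker\hE_\mu$. Your definition of $\hb$ places the $\hE$ on the first argument rather than the second, but by symmetry of $b^\prime$ this is equivalent to the paper's convention; the uniqueness, descent, and radical arguments are otherwise identical.
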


\begin{proof}  Again we can assume that $I=I^\prime\cup\{\mu\}$ for some $\mu\not\in I^\prime$ by an inductive argument. As in the proof of Proposition 
\ref{prop-minimalextobj} we construct the $\SA$-module $\widehat M_\mu$ together with the homomorphisms $\widehat F_\mu\colon M_{\delta\mu}\to\widehat M_\mu$ and $\widehat E_\mu\colon \widehat M_\mu\to M_{\delta\mu}$. Then, by construction, $M_\mu=\widehat M_\mu/\ker \widehat E_\mu$, and $E_\mu$ and $F_\mu$ are the homomorphisms induced by $\widehat E_\mu$ and $\widehat F_\mu$. Recall that $\widehat F_\mu$ is the direct sum of the homomorphisms $\widehat F_{\alpha,m}\colon M_{\mu+m\alpha}\to \widehat M_\mu$ and that it is an isomorphism. So we can define a bilinear form $\widehat b_\mu$ on $\widehat M_\mu$ by setting
$$
\widehat b_\mu(\widehat F_{\alpha,m}(x),\widehat F_{\beta,n}(y)):=b^\prime(x,\widehat E_{\alpha,m}\widehat F_{\beta,n}(y))
$$
for $\alpha,\beta\in\Pi$, $m,n>0$, $x\in M_{\mu+m\alpha}$ and $y\in M_{\mu+n\beta}$.

Let us show now that this bilinear form is symmetric. This amounts to showing that $b^\prime(x,\widehat E_{\alpha,m}\widehat F_{\beta,n}(y))=b^\prime(y,\widehat E_{\beta,n}\widehat F_{\alpha,m}(x))$. We now use the definition of the homomorphism $\widehat E_{\beta,n}$ that the reader finds in the proof of Proposition \ref{prop-minimalextobj}. Suppose that $\alpha\ne\beta$. Then
\begin{align*}
b^\prime(x,\widehat E_{\alpha,m}\widehat F_{\beta,n}(y))&=b^\prime(x,F_{\beta,n}E_{\alpha,m}(y))\quad\text{(by definition of $\widehat E_{\alpha,m}$)}\\
&=b^\prime(F_{\alpha,m}E_{\beta,n}(x),y)\quad\text{(by contravariance of $b^\prime$)}\\
&=b^\prime(\widehat E_{\beta,n}\widehat F_{\alpha,m}(x),y)\quad\text{(by definition of $\widehat E_{\beta,n}$)}\\
&=b^\prime(y,\widehat E_{\beta,n}\widehat F_{\alpha,m}(x))\quad\text{(as $b^\prime$ is symmetric)}.
\end{align*}
Now suppose that $\alpha=\beta$.  Then we can write $\widehat E_{\alpha,m}\widehat F_{\alpha,n}(y)=\sum_{r\ge 0} c_r F_{\alpha,n-r}E_{\alpha,m-r}(y)$ with $c_r=\qchoose{\lgl\mu,\alpha^\vee\rgl+m+n}{r}_{d_\alpha}$. It will be important later that  $c_r$ depends only on $\mu$, $\alpha$, $r$  and the sum $m+n$. The contravariance of $b^\prime$ then yields
\begin{align*}
b^\prime(x,\widehat E_{\alpha,m}\widehat F_{\alpha,n}(y))&=b^\prime(x,\sum_{r\ge 0} c_r F_{\alpha,n-r}E_{\alpha,m-r}(y))\\
&=b^\prime(\sum_{r\ge 0} c_r F_{\alpha,m-r}E_{\alpha,n-r}(x),y).
\end{align*}
Now $\sum_{r\ge 0} c_r F_{\alpha,m-r}E_{\alpha,n-r}(x)=\widehat E_{\alpha,n}\widehat F_{\alpha,m}(x)$, again by definition (note that the coefficient $c_r$ is the same if we replace the triple $(y,m,n)$ with $(x,n,m)$!), so 
$$
b^\prime(x,\widehat E_{\alpha,m}\widehat F_{\alpha,n}(y))=b^\prime(\widehat E_{\alpha,n}\widehat F_{\alpha,m}(x),y).
$$
Using the symmetry of $b^\prime$ one last time yields the required result. So $\widehat b_\mu$ is symmetric. 

We now need to show that $\widehat b_\mu\colon \widehat M_\mu\times \widehat M_\mu\to\SA$ decends to a bilinear form on $M_\mu=\widehat M_\mu/\ker \widehat E_\mu$. 
So suppose that $y\in \ker \widehat E_\mu$. Then $\widehat b_\mu(\widehat F_{\mu}(x),y)=b^\prime(x,\widehat E_\mu(y))=0$ for all $x\in M_{\delta\mu}$.  As $\widehat F_\mu$ is surjective,  $y$ is contained in the radical of $\widehat b_\mu$, and hence we obtain an induced bilinear form $b_\mu$ on the quotient $M_\mu=\widehat M_\mu/\ker \widehat E_\mu$. We now define the bilinear form $b$ on $M=M^\prime\oplus M_\mu$ as the (orthogonal) direct sum of  $b^\prime$ and $b_\mu$. 

Now we show that $b$ is contravariant. We need to check that 
$$
b(F_{\nu,\alpha,m}(x),y)=b(x,E_{\nu,\alpha,m}(y))
$$
for all $\nu\in I$, $\alpha\in\Pi$, $m>0$, $x\in M_{\nu+m\alpha}$, $y\in M_\nu$. For $\nu\ne\mu$ this follows directly from the contravariance of $b^\prime$. In the case $\mu=\nu$ this is a direct consequence of the definition of $\widehat b_\mu$ and the fact that $E_{\mu,\alpha,m}$ and $F_{\mu,\alpha,m}$ are induced by $\widehat E_{\mu,\alpha,m}$ and $\widehat F_{\mu,\alpha,m}$.

We now show that the form $b$ is unique. But note that the contravariance forces us to have 
$b_\mu(F_{\alpha,m}(x), F_{\beta,n}(y))=b^\prime(x, E_{\alpha,m} F_{\beta,n}(y))$. As the homomorphism $F_\mu\colon M_{\delta\mu}\to M_\mu$ is surjective (as $M$ is the minimal extension), this shows that there is at most one extension of $b^\prime$ to a contravariant form on $M$. 

Now we prove (1). Suppose that the radical of $b^\prime$ vanishes. Suppose that $x$ is in the radical of $b$. Then $x\in M_\mu$. The equation $0=b_\mu(x,F_\mu(y))=b^\prime_{\delta\mu}(E_\mu(x),y)$ shows that $E_\mu(x)$ is in the radical of $b^\prime_{\delta\mu}$. The non-degeneracy of $b^\prime_{\delta\mu}$ implies that $E_\mu(x)=0$. As $F_\mu\colon M_{\delta\mu}\to M_\mu$ is surjective and as $E_\mu$ is injective on the image of $F_\mu$ we deduce that $x=0$. Hence $b_\mu$, and hence $b$, has vanishing radical.

Now note that if $\SA=\SK$ is a field, then a symmetric bilinear form on a finite dimensional $\SK$-vector space is non-degenerate if and only if its radical vanishes. Hence (1) implies that $b_\mu$ is non-degenerate if $b^\prime$ is non-degenerate. So we deduce (2).
\end{proof}

\subsection{Non-degenerate extensions} Suppose that we are in the situation of Lemma \ref{lemma-extbiform} (with $I=I^\prime\cup\{\mu\}$) and assume that the form $b^\prime$ on $M^\prime$ is non-degenerate. Its unique extension $b$ on $\mathsf{E}_{I^\prime}^I(M^\prime)$  possibly is degenerate. The next result shows that in this case we can extend $\mathsf{E}_{I^\prime}^I(M^\prime)$ further in such a way that $b^\prime$ admits a non-degenerate contravariant extension $b$. However, we have to assume that $\mathsf{E}_{I^\prime}^I(M^\prime)_\mu$ is a free $\SA$-module and that $\SA$ is local. 

\begin{proposition}\label{prop-selfdualobj} Suppose that $\SA$ is local. Let $I^\prime$ be a closed subset of $X$ and let $\mu\in X$ be such that $I:=I^\prime\cup\{\mu\}$ is also closed. Let $M^\prime$ be an object in $\CX_{I^\prime}$ and $b^\prime$  a non-degenerate symmetric contravariant bilinear form on $M^\prime$. Suppose that the $\SA$-module $\mathsf{E}_{I^\prime}^I(M^\prime)_\mu$ is  free.
Then there exists an object $M$ in $\CX_{I}$ and a non-degenerate symmetric contravariant form $b$ on $M$ with the following properties.
\begin{enumerate}
\item  There is an isomorphism $M_{I^\prime}\cong M^\prime$ that identifies $b_{I^\prime}$ with $b^\prime$. 

\item An endomorphism $f$ of $M$ is an automorphism if and only if its restriction to $M_{I^\prime}$ is an automorphism. 
\end{enumerate}
\end{proposition}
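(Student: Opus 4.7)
Take $M$ to be the maximal extension $\widehat M$ produced by Proposition \ref{prop-satextobj}: properties (1) and (2) of the present proposition are then direct consequences of the corresponding items in Proposition \ref{prop-satextobj}, so the entire content is the construction of a non-degenerate symmetric contravariant form $b$ on $M$ restricting to $b^\prime$ on $M_{I^\prime}$.

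Let $\widetilde M := \mathsf{E}_{I^\prime}^I(M^\prime)$ carry the canonical extension $\widetilde b$ of $b^\prime$ supplied by Lemma \ref{lemma-extbiform}, so that $\rad\,\widetilde b=0$. Since $\widetilde M_\mu$ is free by hypothesis, the induced map $\widetilde b_\mu\colon \widetilde M_\mu\hookrightarrow \widetilde M_\mu^\ast$ is injective, and after tensoring with $\SK$ it becomes an isomorphism (a trivial-radical symmetric form on a finite-dimensional $\SK$-vector space is non-degenerate), so its cokernel $Q$ is torsion and finitely generated. The key identification is that $Q$ agrees with the torsion module $\widetilde M_{\{\mu\},\max}/\widetilde M_{\{\mu\}}$ appearing in the construction of $\widehat M$. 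Inspecting the minimal extension, the auxiliary endomorphism $\widehat E_\mu$ of $M^\prime_{\delta\mu}$ (with $\widehat F_\mu=\id$) used to build $\widetilde M_\mu$ is self-adjoint with respect to $b^\prime_{\delta\mu}$: this is immediate from the symmetry verified in the proof of Lemma \ref{lemma-extbiform}. Since $\widetilde M_\mu = \widehat M_\mu/\ker\widehat E_\mu$, its dual $\widetilde M_\mu^\ast$ is the annihilator of $\ker\widehat E_\mu$ in $\widehat M_\mu^\ast$, which $b^\prime_{\delta\mu}$ identifies with $(\ker\widehat E_\mu)^\perp\subseteq \widetilde M_{\delta\mu}$; self-adjointness together with a saturation argument (comparing over $\SK$) yields $(\ker\widehat E_\mu)^\perp=\widetilde M_{\{\mu\},\max}$, and under this identification $\widetilde b_\mu(\widetilde M_\mu)$ corresponds to $\im\widehat E_\mu = \widetilde M_{\{\mu\}}$, proving the claim.

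Recall from Proposition \ref{prop-satextobj} that $M_\mu=\widetilde M_\mu\oplus D_{\max}$, where $D_{\max}$ is a projective cover of $\widetilde M_{\{\mu\},\max}/\widetilde M_{\{\mu\}}$ lifted to $\gamma_{\max}\colon D_{\max}\to \widetilde M_{\{\mu\},\max}$, with $E^M_\mu=(E^{\widetilde M}_\mu,\gamma_{\max})$ and $F^M_\mu=(F^{\widetilde M}_\mu,0)^T$. Transporting $\gamma_{\max}$ through the isomorphism $\widetilde M_\mu^\ast\cong\widetilde M_{\{\mu\},\max}$ of the previous paragraph produces a map $\Gamma\colon D_{\max}\to \widetilde M_\mu^\ast$. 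One then defines
\[
b_\mu\;:=\;\begin{pmatrix} \widetilde b_\mu & \Gamma \\ \Gamma^T & 0 \end{pmatrix}
\]
on $\widetilde M_\mu\oplus D_{\max}$, where $\Gamma^T\colon\widetilde M_\mu\to D_{\max}^\ast$ is the transpose under the canonical identification $\widetilde M_\mu^{\ast\ast}\cong \widetilde M_\mu$. Extending by $b_\nu:=b^\prime_\nu$ for $\nu\in I^\prime$ yields a symmetric bilinear form $b$ on $M$. Contravariance of $b$ against $E_\mu^M$ and $F_\mu^M$ reduces, after translating $\Gamma$ back to $\gamma_{\max}$, to the defining identity $b_\mu(F^{\widetilde M}_\mu v,d)=b^\prime_{\delta\mu}(v,\gamma_{\max}(d))$ combined with the contravariance of $\widetilde b$ already in hand.

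The main obstacle is showing $b_\mu$ non-degenerate. By Nakayama it suffices to check that the form matrix is an isomorphism modulo the maximal ideal $\fm$ of $\SA$. Over the residue field $\SF$, the projective-cover property of $D_{\max}\twoheadrightarrow Q$ becomes an isomorphism $\ol D_{\max}\xrightarrow{\sim}\ol Q$, hence $\ol\Gamma(\ol D_{\max})$ is an $\SF$-linear complement of $\ol{\widetilde b}_\mu(\ol{\widetilde M}_\mu)$ in $\ol{\widetilde M}_\mu^\ast$. For $(v,d)$ in the kernel of the reduced form map, the upper-block identity $\ol{\widetilde b}_\mu(v)+\ol\Gamma(d)=0$ lies in this direct sum and therefore splits as $\ol{\widetilde b}_\mu(v)=0$ and $\ol\Gamma(d)=0$, so $d=0$ and $v\in K:=\ker\ol{\widetilde b}_\mu$. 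The lower-block identity $\ol\Gamma^T(v)=0$ then places $v$ in the annihilator of $\ol\Gamma(\ol D_{\max})$ under the perfect pairing $\ol{\widetilde M}_\mu\times\ol{\widetilde M}_\mu^\ast\to \SF$; but the direct-sum decomposition above forces this annihilator to intersect $K$ (which is exactly the annihilator of $\ol{\widetilde b}_\mu(\ol{\widetilde M}_\mu)$) only in zero, whence $v=0$. This establishes non-degeneracy; the compatibilities $M_{I^\prime}\cong M^\prime$ and $b_{I^\prime}=b^\prime$ are transparent from the construction.
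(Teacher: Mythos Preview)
Your argument is correct, and it takes a genuinely different route from the paper. The paper does \emph{not} start from the maximal extension of Proposition~\ref{prop-satextobj}; instead it builds $M$ from scratch. Over the residue field it identifies the radical $\ol R\subset \ol{\widetilde M}_\mu$ of $\ol{\tilde b}$, lifts a decomposition $\widetilde M_\mu=D\oplus R$, adjoins a free module $S$ of rank equal to $\rk R$ together with an arbitrary perfect pairing $c\colon R\times S\to\SA$, and then \emph{defines} $E_\mu'\colon S\to M_{\delta\mu}$ by the contravariance requirement $b'_{\delta\mu}(E_\mu'(s),y)=b_\mu(s,F_\mu(y))$. It must then verify by hand that the result lies in $\CX_I$ (notably (X3b), using non-degeneracy of $\widetilde b_\SK$), that $b$ is non-degenerate, and that property~(2) holds. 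Only afterwards, in the proof of Proposition~\ref{prop-extselfdual}, is this $M$ identified with the maximal extension via Proposition~\ref{prop-cform}.

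Your approach is more conceptual: the identification $\operatorname{coker}(\widetilde b_\mu\colon\widetilde M_\mu\hookrightarrow\widetilde M_\mu^\ast)\cong \widetilde M_{\{\mu\},\max}/\widetilde M_{\{\mu\}}$, coming from the self-adjointness of $\widehat E_\mu$ and the equality $(\ker\widehat E_\mu)^\perp=\widetilde M_{\{\mu\},\max}$, explains \emph{why} the projective cover $D_{\max}$ appearing in the maximal extension is exactly the right module to add in order to repair the degeneracy of $\widetilde b_\mu$. You then get properties (1) and (2) for free from Proposition~\ref{prop-satextobj}, and the non-degeneracy check over $\SF$ via the direct-sum decomposition $\ol{\widetilde M}_\mu^\ast=\ol{\widetilde b}_\mu(\ol{\widetilde M}_\mu)\oplus\ol\Gamma(\ol D_{\max})$ is clean. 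The paper's approach, by contrast, is more self-contained (no reliance on Proposition~\ref{prop-satextobj}) but pays for this by reproving property~(2) and the axioms of $\CX_I$ from scratch.
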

\begin{proof} Denote by $\widetilde b$ the unique symmetric contravariant form on $\widetilde M:=\mathsf{E}_{I^\prime}^I (M^\prime)$ that extends $b^\prime$ (Lemma \ref{lemma-extbiform}).  If $\widetilde b$ is non-degenerate, we set $M:=\widetilde M$ and $b:=\widetilde b$. Then property (1) holds by construction, and property (2) follows from Proposition \ref{prop-minextfun}.  

Now suppose that $\widetilde b$ is degenerate. Let $\SF$ be the residue field of $\SA$ and consider the bilinear form  $\ol{\tilde b}$ on $\ol{\widetilde M}:=\widetilde M\otimes_\SA\SF$. This is a symmetric bilinear contravariant form on    $\ol{\widetilde M}$ and it is degenerate. As $b^\prime$ is non-degenerate, the radical $\ol R$ of $\ol{\tilde b}$ is contained in $\ol{\widetilde M}_\mu$. Now let us fix a vector space complement to $\ol R$ in $\ol{\widetilde M}_\mu$, i.e.~a decomposition $\ol{\widetilde M}_\mu=\ol D\oplus\ol R$. Then the restriction of $\ol{\tilde b}$ to $\ol D\times \ol D$ is non-degenerate. By assumption,  $\widetilde M_\mu$ is a free $\SA$-module, so we can choose a lift $\widetilde M_\mu=D\oplus R$ of the former decomposition with free $\SA$-modules $D$ and $R$ (of finite rank, as $\widetilde M_\mu$ is of finite rank). Now let $S$ be a free $\SA$-module of the same rank as $R$, and  fix  a non-degenerate bilinear pairing $c\colon  R\times S\to\SA$.  Let $b_\mu$ be the bilinear form on $\widetilde M_\mu\oplus S=D\oplus R\oplus S$ defined by
$$
b_\mu(d_1+r_1+s_1,d_2+r_2+s_2)=\widetilde b_\mu(d_1+r_1,d_2+r_2)+c(r_1,s_2)+c(r_2,s_1),
$$
where $d_i\in D$, $r_i\in R$, $s_i\in S$ for $i=1,2$. 
This is a symmetric bilinear form. We claim that it is non-degenerate. Let $\ol b_\mu$ be the specialization of $b_\mu$ to $\ol D\oplus \ol R\oplus \ol S$. It agrees with $\ol{\tilde b}_\mu$ on $\ol D\oplus\ol R$. Recall that $\ol R$ is the radical of $\ol{ \tilde b}$, so $\ol b_\mu(r,d)=0$ for $r\in\ol R$ and $d\in\ol D$. Hence we have
$$
\ol b_\mu(d_1+r_1+s_1,d_2+r_2+s_2)=\ol{\tilde b}_\mu(d_1,d_2)+\ol c(r_1,s_2)+\ol c(r_2,s_2)
$$
for $d_i\in \ol D$, $r_i\in \ol R$, $s_i\in\ol S$ for $i=1,2$. 
Hence $\ol b_\mu$ splits into the non-degenerate bilinear form $\ol{\tilde b}_\mu$ on $\ol D\times \ol D$ and the non-degenerate bilinear form on $(\ol R\oplus \ol S)\times (\ol R\oplus \ol S)$ induced by the non-degenerate pairing $\ol c\colon \ol R\times \ol S\to \SF$. Hence $\ol b_\mu$, and hence $b_\mu$ is non-degenerate.

Now set $M:={\widetilde M}\oplus S$ and let  $M_\mu:={\widetilde M}_\mu\oplus S$ be its $\mu$-component. Define $F_\mu\colon M_{\delta\mu}\to M_\mu$ as the composition of $F_\mu^{\widetilde M}\colon {\widetilde M}_{\delta\mu}=M_{\delta\mu}\to \widetilde M_\mu$ and the inclusion $\widetilde M_\mu\subset M_\mu$ of the direct summand.  Define a homomorphism $E^\prime_\mu\colon S\to M_{\delta\mu}$ in the following way. For $s\in S$, let $E^\prime_\mu(s)\in M_{\delta\mu}$ be the element such that
$$
b^\prime(E^\prime_\mu(s), y)=b_\mu(s,F_\mu(y))
$$
for all $y\in M_{\delta\mu}$. Note that the element $E^\prime_\mu(s)$ exists and is unique by the non-degeneracy of $b^\prime$.  We obtain an $\SA$-linear homomorphism $E^\prime_\mu\colon S\to M_{\delta\mu}$ and can define $E_\mu:=(E^{\widetilde M}_\mu\oplus E^\prime_\mu)^T\colon M_\mu=\widetilde M_\mu\oplus S\to M_{\delta\mu}$.

We now claim that the object $M$, together with the homomorphisms $F_\mu$ and $E_\mu$ that we just constructed, is an object in $\CX_I$. The axiom (X1) is immediate. The axiom (X2)  only involves the action of $E_\mu$ on the image of $F_\mu$. As (X2) holds for $\widetilde M$, it also holds for $M$. The same argument implies that (X3a) holds. By construction,  $M_\mu/\im F_\mu=S$ is a free $\SA$-module. Hence we are left with axiom (X3b), i.e.~we have to show that the quotient $E_\mu(M_\mu)/E_\mu(\im F_\mu)$ is a torsion $\SA$-module. This means that we have to show that $E_\mu(M_\mu)$ is contained in $M_{\{\mu\},\max}$.  As $\im F_\mu=\im F^{\widetilde M}_\mu=\widetilde{M}_\mu\subset M_\mu$ we have $M_{\{\mu\},\max}=\widetilde M_{\{\mu\},\max}$. As $E_\mu(\widetilde M_\mu)=E_\mu^{\widetilde M}(\widetilde M_\mu)\subset \widetilde M_{\{\mu\},\max}$  it is sufficient to show that $E_\mu^\prime(s)$ is contained in $\widetilde{M}_{\{\mu\},\max}$ for all $s\in S$. This means that there exists some $\xi\in\SA$ and an element $z\in\widetilde{M}_{\mu}=\im F_\mu^{\widetilde M}$ such that $\xi E_\mu^\prime(s)=E_\mu(z)$. 

Let $\SK$ be the quotient field of $\SA$. We denote by $\tilde b_\SK$ the induced bilinear form on $\widetilde M_\SK=\widetilde M\otimes_\SA\SK$.  Part (3) of Lemma \ref{lemma-extbiform} shows that  the form $\tilde b_\SK$ is non-degenerate.  Hence there exists an element $x\in (\widetilde M_\mu)_\SK$ with the property that 
$$
\tilde b_\SK(x,F_\mu(y))=b^\prime_\SK(E^\prime_\mu(s),y)
$$
for all $y\in M_{\delta\mu}$ (note that by definition of $E_\mu^\prime$, the right hand side vanishes for $y$ in the kernel of $F_\mu$).  A comparison with equation $(\ast)$ shows that $E_\mu(x)=E_\mu^\prime(s)$. Now we kill denominators. Let $\xi\in\SA\setminus\{0\}$ be such that $z:=\xi x\in \widetilde M_\mu\subset (\widetilde M_\mu)_\SK$. Then $\xi E^\prime_\mu(s)=E_\mu(z)$, which proves our claim.

Finally, we need to check that the bilinear form $b$ is a non-degenerate  symmetric contravariant form on $M$. We have already checked that $b$ is symmetric and non-degenerate. For the contravariance it suffices to check that $b_\mu(x,F_\mu(y))=b_{\delta\mu}(E_\mu(x),y)$, as $b$ is an extension of $b^\prime$. For $x\in\widetilde M_\mu$ this follows from the contravariance of $\tilde b$, and for $x\in S$ this is ensured by the definition of $E_\mu^\prime$.   

So we have now constructed an object $M$ and a non-degenerate symmetric contravariant form $b$ on $M$. From the construction, property (1) immediately follows. 
Now we show that (2) holds. First we make the following observation. 
Let $x\in \ol M_\mu$. Then $\ol b(x,\ol F_\mu(y))=\ol b(\ol E_\mu(x),y)$ and hence the non-degeneracy of $\ol b$ implies that $\ol E_\mu(x)=0$ if and only if $x\in (\im \ol F_\mu)^{\perp}$. By construction of $b$ we have $(\im \ol F_\mu)^{\perp}=\ol R\subset\im \ol F_\mu$. 
Now let $f$ be an endomorphism of $M$. If $f$ is an automorphism, then its restriction $f_{I^\prime}$ to $M_{I^\prime}\cong M^\prime$ is an automorphism as well. Conversely, suppose that $f_{I^\prime}$  is an automorphism.  We want to show that this implies that $f$ is an automorphism. For this we have to show that $f_\mu$ is an automorphism. As $\SA$ is local, we can equivalently show that $\ol f_\mu$ is an automorphism.  As $\ol M_\mu$ is an $\SF$-vector space of finite dimension, it suffices to show that $\ol f_\mu$ is injective. So suppose that $x\in \ol M_\mu$ is such that $\ol f_\mu(x)=0$. Then $0=\ol E_\mu \ol f_\mu(x)=\ol f_{\delta\mu} \ol E_\mu (x)$. Our assumption implies that $\ol f_{\delta\mu}$ is an automorphism, so we deduce that $\ol E_\mu(x)=0$. By the above, this implies that $x$ is contained in the image of $\ol F_\mu$. As $\ol f_{\delta\mu}$ is an automorphism,  $\ol f_\mu$ restricts to  an automorphism on the image of $\ol F_\mu$ in $\ol M_\mu$.  Hence $x=0$, so $\ol f_\mu$ is injective. 
 \end{proof}

\subsection{Contravariant forms on maximal objects} In this section we use Proposition \ref{prop-selfdualobj} to show that there exists a non-degenerate contravariant symmetric form on $S_{\max}(\lambda)$ provided that each weight space of $S_{\max}(\lambda)$ is a free $\SA$-module.
\begin{proposition}\label{prop-extselfdual}  Suppose that $\SA$ is a local ring. Let $\lambda\in X$ and suppose that $S_{\max}(\lambda)_\mu$ is a free $\SA$-module for all $\mu\in X$. Then there exists a non-degenerate symmetric contravariant form on $S_{\max}(\lambda)$. 
\end{proposition}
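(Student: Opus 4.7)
The plan is to construct the form inductively, extending $S_{\max}(\lambda)$ one weight at a time by repeated application of Proposition \ref{prop-selfdualobj}, and identifying the resulting objects with the restrictions of $S_{\max}(\lambda)$ via the uniqueness statement of Proposition \ref{prop-satextobj}. For the base case, set $I_0 := \{\nu \in X \mid \nu \not\le \lambda\} \cup \{\lambda\}$, which is closed; by Proposition \ref{prop-catsat}(1a), $S_{\max}(\lambda)_{I_0}$ is supported only at $\lambda$ with weight space $\SA$, and the multiplication pairing $(a,b) \mapsto ab$ is an obvious non-degenerate symmetric contravariant form (contravariance is vacuous since all the relevant $E$- and $F$-maps are zero). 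I would then enumerate $\Omega' := \{\mu \in X \mid \mu < \lambda\}$ as $\mu_1, \mu_2, \ldots$ in order of increasing height $\operatorname{ht}(\lambda - \mu_k)$, breaking ties arbitrarily. This ordering guarantees that $I_k := I_{k-1} \cup \{\mu_k\}$ is closed in $X$ at each stage, and that $\bigcup_k I_k = X$.

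At the inductive step, I would extend a non-degenerate contravariant form $b_{k-1}$ on an object $M_{k-1} \cong S_{\max}(\lambda)_{I_{k-1}}$ by invoking Proposition \ref{prop-selfdualobj}. The only nontrivial hypothesis to verify is that $\mathsf{E}_{I_{k-1}}^{I_k}(M_{k-1})_{\mu_k}$ is a free $\SA$-module. Under the inductive identification this space coincides with $\im F_{\mu_k}^{S_{\max}(\lambda)}$; by axiom (X3c), the quotient $S_{\max}(\lambda)_{\mu_k}/\im F_{\mu_k}^{S_{\max}(\lambda)}$ is free, so the inclusion splits and $\im F_{\mu_k}^{S_{\max}(\lambda)}$ is a direct summand of the finitely generated free $\SA$-module $S_{\max}(\lambda)_{\mu_k}$, hence itself free. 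Proposition \ref{prop-selfdualobj} then yields $M_k \in \CX_{I_k}$ together with a non-degenerate symmetric contravariant form $b_k$ extending $b_{k-1}$, all of whose weight spaces are free $\SA$-modules by construction (the new weight space at $\mu_k$ is the sum of the free module $\im F_{\mu_k}^{S_{\max}(\lambda)}$ and the free complement $D$ chosen in the proof of Proposition \ref{prop-selfdualobj}).

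To close the induction, I need $M_k \cong S_{\max}(\lambda)_{I_k}$ extending the identification at step $k-1$. Since $M_k$ is free as an $\SA$-module and $b_k$ is non-degenerate, Proposition \ref{prop-cform} forces $M_k$ to be maximal; combined with the automorphism-detection property from Proposition \ref{prop-selfdualobj}(2) and the fact that $M_k|_{I_{k-1}} \cong M_{k-1}$, the uniqueness clause of Proposition \ref{prop-satextobj} supplies the desired isomorphism. Finally, since $\bigcup_k I_k = X$ and the forms $b_k$ are compatible under these identifications (each $b_k$ restricts to $b_{k-1}$), they glue to a non-degenerate symmetric contravariant form on $S_{\max}(\lambda)$. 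The main obstacle is really just the freeness verification for $\im F_{\mu_k}^{S_{\max}(\lambda)}$ at each step: this is exactly where the freeness hypothesis on all weight spaces of $S_{\max}(\lambda)$, together with axiom (X3c) and the locality of $\SA$, does the essential work that drives the entire induction.
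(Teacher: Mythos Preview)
Your proof is correct and follows essentially the same inductive strategy as the paper's: start with the obvious form on the highest weight space, verify at each step that $\mathsf{E}_{I_{k-1}}^{I_k}(S_{\max}(\lambda)_{I_{k-1}})_{\mu_k}\cong \im F_{\mu_k}^{S_{\max}(\lambda)}$ is free (via (X3c), the freeness hypothesis, and locality of $\SA$), apply Proposition~\ref{prop-selfdualobj}, then use Proposition~\ref{prop-cform} to see that the resulting object is maximal and hence identifies with $S_{\max}(\lambda)_{I_k}$. The only cosmetic difference is that the paper pins down the identification $M_k\cong S_{\max}(\lambda)_{I_k}$ by first observing (from Proposition~\ref{prop-selfdualobj}(2) and indecomposability of $S_{\max}(\lambda)_{I_{k-1}}$) that $M_k$ is indecomposable and then invoking the classification of indecomposable maximal objects, whereas you appeal directly to the uniqueness clause of Proposition~\ref{prop-satextobj}; both routes are valid.
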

\begin{remark} If $\lambda$ is dominant, then the assumption of the proposition is satisfied due to the remark following Proposition \ref{prop-domfin}.
\end{remark}
\begin{proof} For notational convenience we set $S:=S_{\max}(\lambda)$. We construct a symmetric bilinear  form $b_\mu$ on $S_\mu$ such that the direct sum $b:=\bigoplus_{\mu\in X}b_\mu$ is a symmetric contravariant bilinear form on $S$. First we set $b_\mu=0$ for all $\mu\not\le\lambda$ (we have $S_\mu=0$ in these cases). Then we fix an arbitrary non-degenerate symmetric form on the free  $\SA$-module $S_\lambda$ of rank 1.  Now suppose that we have already constructed a non-degenerate symmetric contravariant form $b^\prime$ on $S_{I^\prime}$ for some closed subset $I^\prime$, and suppose that $\mu\not\in I^\prime$ is such that $I:=I^\prime\cup\{\mu\}$ is closed as well. By construction of the maximal extension, $\mathsf{E}_{I^\prime}^I(S_{I^\prime})_\mu$ identifies with $\im F_\mu^S\subset S_\mu$. By (X3),  $\im F_\mu^S$ is a direct summand in $S_\mu$ and by assumption, $S_\mu$ is a free $\SA$-module. Since we assume that $\SA$ is local, also $\im F_\mu^S$, and hence $\mathsf{E}_{I^\prime}^I(S_{I^\prime})_\mu$ are  free $\SA$-modules. Hence we can apply Proposition \ref{prop-selfdualobj} and obtain an object $M$ in $\CX_I$  and a non-degenerate contravariant form $b$ on $M$ such that $M_{I^\prime}\cong S_{I^\prime}$ and $b_{I^\prime}\cong b^\prime$. Part (2) of Proposition \ref{prop-selfdualobj} and the fact that $S_{I^\prime}$ is indecomposable imply that $M$ is indecomposable. By Proposition \ref{prop-cform},  $M$ is maximal, hence it must be isomorphic to $S_{\max}(\mu)_I$ for some $\mu\in I$. As $M_{I^\prime}\cong S_{\max}(\lambda)_{I^\prime}$, we have $M\cong S_{\max}(\lambda)_I$. So we obtain that there exists a non-degenerate symmetric contravariant form on $S_{\max}(\lambda)_I$. We continue by induction. 
\end{proof}

\section{Tilting modules in $\CO_\SA$}

There is also the notion of a contravariant form on representations of  a quantum group. Before we come to its definition, recall that there is an  antiautomorphism $\tau$ of order $2$ on $U_\SA$ that maps $e_\alpha$ to $f_\alpha$ and $k_\alpha^{\pm 1}$ to $k_\alpha^{\pm 1}$ (this is an immediate consequence of the definition of $U_\SZ$ by generators and relations). The contravariant dual of an object $M$ in $\CO_\SA$ is given by $dM=\bigoplus_{\nu\in X} M_\nu^\ast \subset M^\ast$  with the action of $U_\SA$ twisted by the antiautomorphism $\tau$. A homomorphism $M\to dM$ is hence the same as an $\SA$-bilinear form $b\colon M\times M\to \SA$ that satisfies
\begin{itemize}
\item $b(x.m,n)=b(m,\tau(x).n)$ for all $x\in U_\SA$, $m,n\in M$.
\item $b(m,n)=0$ if $m\in M_\mu$, $n\in M_\nu$ and $\mu\ne\nu$.
\end{itemize}
Such a form is also called a {\em contravariant form on $M$}. 

Suppose that $\SA$ is a generic $\SZ$-algebra. Recall that a {\em tilting module} in $\CO_\SA$ is an object $T$ such that $T$ and its dual $dT$  admit a Weyl filtration. We denote by $\CO_\SA^{\tilt}$ the full subcategory of $\CO_\SA$ that contains all tilting modules.

\begin{theorem} Assume that $\SA$ is  local and generic.
\begin{enumerate} 
\item For any dominant weight $\lambda$,  the object $T(\lambda):=\mathsf{R}(S_{\max}(\lambda))$ is an indecomposable self-dual tilting module in $\CO$, and its endomorphism ring is local. 
\item The functors $\mathsf{R}$ and $\mathsf{S}$ induce inverse equivalences between the category of maximal objects in  $\CX^{\fin}_\SA$ and $\CO_{\SA}^{\tilt}$. 
\item If $T$ is a tilting module in $\CO_\SA$, then there are $\lambda_1,\dots,\lambda_l\in X^+$ such that $T\cong T(\lambda_1)\oplus\dots\oplus T(\lambda_l)$.
\end{enumerate}
\end{theorem}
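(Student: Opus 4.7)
The plan is to derive all three parts from the machinery already at hand, with (2) and (3) resting on a single key claim: for every tilting module $T$, the object $\mathsf{S}(T)$ is maximal in $\CX^{\fin}_\SA$.

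For part (1), I would argue as follows. Because $\lambda$ is dominant, Proposition \ref{prop-domfin} together with the remark following it places $S_{\max}(\lambda)$ in $\CX^{\fin}_\SA$ with each weight space free over $\SA$, and Theorem \ref{thm-Wflag} then shows that $T(\lambda)=\mathsf{R}(S_{\max}(\lambda))$ admits a Weyl filtration. Proposition \ref{prop-extselfdual} supplies a non-degenerate symmetric contravariant form on $S_{\max}(\lambda)$. Since $\tau$ sends $e_\alpha^{[n]}$ to $f_\alpha^{[n]}$ and fixes $U^0$ pointwise (it fixes the generators $k_\alpha^{\pm 1}$, and $U^0$ is commutative), the contravariance relation in $\CX$ coincides, on the algebra generators of $U$, with the contravariance relation in $\CO$ (the $U^0$-part being handled by weight-space orthogonality). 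Thus the form transports across $\mathsf{R}$ to a non-degenerate contravariant form on $T(\lambda)$, yielding an isomorphism $T(\lambda)\cong dT(\lambda)$. In particular $dT(\lambda)$ has a Weyl filtration, so $T(\lambda)$ is tilting. Indecomposability and locality of $\End_\CO(T(\lambda))$ then follow from $\End_\CO(T(\lambda))\cong\End_\CX(S_{\max}(\lambda))$ (full faithfulness of $\mathsf{R}$, Theorem \ref{thm-XU}) combined with Proposition \ref{prop-catsat}(2).

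To establish the key claim, I would equip $T\oplus dT$ with the canonical contravariant pairing
$$
b\bigl((m_1,\phi_1),(m_2,\phi_2)\bigr):=\phi_1(m_2)+\phi_2(m_1).
$$
A short calculation using $\tau^2=\id$ verifies that $b$ is symmetric and contravariant, and non-degeneracy follows because each weight space of $T$ (and hence of $dT$) is a free $\SA$-module of finite rank, so the weight-wise evaluation pairing between $T_\mu$ and $T_\mu^{\ast}$ is perfect. Transporting $b$ along $\mathsf{S}$ produces a non-degenerate symmetric contravariant form on $\mathsf{S}(T)\oplus\mathsf{S}(dT)$, which lies in $\CX^{\fin}$ by Theorem \ref{thm-Wflag}. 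Proposition \ref{prop-cform} then forces this direct sum to be maximal, and Proposition \ref{prop-catsat}(3) expresses it as a finite direct sum $\bigoplus_{i=1}^n S_{\max}(\mu_i)$, each summand having local endomorphism ring by Proposition \ref{prop-catsat}(2). The Krull--Schmidt--Azumaya theorem applies to this finite decomposition and refines the coarser decomposition $\mathsf{S}(T)\oplus\mathsf{S}(dT)$, so $\mathsf{S}(T)$ is itself isomorphic to a sub-direct-sum $\bigoplus_{i\in I}S_{\max}(\mu_i)$ and is therefore maximal.

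Given the claim, parts (2) and (3) are immediate. A maximal object in $\CX^{\fin}_\SA$ decomposes as a finite direct sum of objects $S_{\max}(\lambda_i)$ by Proposition \ref{prop-catsat}(3) (finiteness coming from the finite weight set), and by Proposition \ref{prop-domfin} each $\lambda_i$ must be dominant. Combining this with the claim, and using $\mathsf{S}\circ\mathsf{R}\cong\id$ and full faithfulness from Theorem \ref{thm-XU}, one obtains the mutually inverse equivalences of (2). For (3), the claim writes $\mathsf{S}(T)\cong\bigoplus_i S_{\max}(\lambda_i)$ with $\lambda_i\in X^+$, and applying $\mathsf{R}$ yields $T\cong\bigoplus_i T(\lambda_i)$. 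The only genuinely delicate step in this plan is the Krull--Schmidt refinement, but since the ambient direct sum is finite and the indecomposable summands $S_{\max}(\mu_i)$ have local endomorphism rings, Azumaya's theorem applies verbatim.
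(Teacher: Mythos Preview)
Your argument for part (1) coincides with the paper's essentially word for word.

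For parts (2) and (3) your route is correct but genuinely different from the paper's. The paper proceeds by a direct ``splitting off'' argument: given an indecomposable tilting module $T$ with maximal weight $\lambda$, it uses Lemma \ref{lemma-minimalextmor} (morphisms extend into maximal targets) to produce morphisms $\mathsf{S}(T)\to S_{\max}(\lambda)$ and $\mathsf{S}(dT)\to S_{\max}(\lambda)$ that are isomorphisms on the $\lambda$-weight line, then dualizes the second map in $\CO$ (using the self-duality $T(\lambda)\cong dT(\lambda)$ from part (1)) to obtain $T(\lambda)\to T$. The composite $T(\lambda)\to T\to T(\lambda)$ is an isomorphism on the highest weight space, hence an automorphism by locality of $\End(T(\lambda))$; thus $T(\lambda)$ splits off $T$, and indecomposability of $T$ gives $T\cong T(\lambda)$.

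Your hyperbolic-form trick trades this explicit construction for a structural observation: the canonical pairing makes $\mathsf{S}(T\oplus dT)$ maximal via Proposition \ref{prop-cform}, and Krull--Schmidt then forces $\mathsf{S}(T)$ itself to be a sum of $S_{\max}$'s. This is elegant and bypasses the morphism-extension machinery of Section \ref{sec-extmor} entirely. The only point requiring care is the strong form of Krull--Schmidt you invoke (a direct summand of $\bigoplus_i S_{\max}(\mu_i)$ is again such a sum). This is fine: $\mathsf{S}(T)$ is free of finite $\SA$-rank, so it decomposes into finitely many indecomposables, and Azumaya's uniqueness theorem---which needs local endomorphism rings only on one side of the comparison---then matches each of these indecomposables with some $S_{\max}(\mu_i)$. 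The paper's approach, by contrast, never invokes Krull--Schmidt and stays within the extension formalism it has built; yours leans on a classical categorical tool to shortcut that.
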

\begin{proof} Proposition \ref{prop-extselfdual} shows that $S_{\max}(\lambda)$ admits a non-degenerate contravariant form, hence $\mathsf{R}(S_{\max}(\lambda))$ admits a non-degenerate contravariant form. In particular, $\mathsf{R}(S_{\max}(\lambda))$ is self-dual. As $\lambda$ is supposed to be dominant, $S_{\max}(\lambda)$ is an object in $\CX^{\fin}$ (cf.~Proposition \ref{prop-domfin}). Hence $T(\lambda)=\mathsf{R}(S_{\max}(\lambda))$  admits a Weyl filtration.  As it  is self-dual, it is a tilting module. 
 Its endomorphism ring is isomorphic to the endomorphism ring of $S_{\max}(\lambda)$ as   $\mathsf{R}$ is a fully faithful functor. In particular, this endomorphism ring  is local, and hence $T(\lambda)$ is indecomposable. Hence (1). 

As each maximal object in $\CX^{\fin}$ is isomorphic to a direct sum of various $S_{\max}(\lambda)$ with $\lambda$ dominant (by  Proposition \ref{prop-domfin}), (1) implies that the functor $\mathsf{R}$ maps each maximal object in $\CX^{\fin}$ to a tilting module. Conversely, let $T$ be an indecomposable tilting module in $\CO_\SA$.   As $T$ admits a Weyl filtration,  $\mathsf{S}(T)$ is an object in  $\CX^{\fin}$ by Theorem \ref{thm-Wflag}. Let $\lambda$ be a maximal weight of $T$. Then $T_\lambda$ is a free module of finite rank. Let us  fix a direct sum decomposition $\mathsf{S}(T)_\lambda=A\oplus B$, where $A$ is free of rank $1$.   By the maximality of $S_{\max}(\lambda)$ we obtain from Lemma \ref{lemma-minimalextmor} a morphism $\tilde f\colon \mathsf{S}(T) \to S_{\max}(\lambda)$ that maps $A$ isomorphically onto $S_{\max}(\lambda)_\lambda$ and $B$ to $0$. Likewise, we can find a morphism $\tilde g\colon \mathsf{S}(dT) \to S_{\max}(\lambda)$ that maps $dA$ isomorphically onto $S_{\max}(\lambda)_\lambda$ and $dB$ to $0$. Applying the functor $\mathsf{R}$ we obtain homomorphisms $f\colon T\to T(\lambda)$ and $g\colon dT \to T(\lambda)$. We now consider the dual homomorphism $dg\colon dT(\lambda)\to T$, and we fix an isomorphism $h\colon T(\lambda)\cong dT(\lambda)$ (this is possible by (1)). Then the composition $f\circ dg\circ h$ is an endomorphism of $T(\lambda)$ that is an automorphism on the highest weight space. As the endomorphism ring of $T(\lambda)$ is local, this composition is an automorphism. Hence $T(\lambda)$ is isomorphic to a direct summand of $T$. As $T$ was supposed to be indecomposable, we obtain $T\cong T(\lambda)$. As each direct summand of a tilting module is a tilting modules again, we obtain (3) by induction. Now (3) and (1) together with the fact that $\mathsf{R}$ and $\mathsf{S}$ are inverse equivalences between $\CX^{\fin}$ and $\CO^{W}$, yield (2). 
 \end{proof}

\section*{A list of notations}
The ring $\SA$ is always supposed to be a $\SZ:=\DZ[v,v^{-1}]$-module that is unital, Noetherian and a domain. In some parts of the article we assume, in addition, that $\SA$ is a local ring, or {\em generic} (see Definition \ref{def-generic}), or both. 
The basic datum is an $X$-graded $\SA$-module $M=\bigoplus_{\mu\in X} M_\mu$ together with $\SA$-linear homomorphisms $E_{\mu,\alpha,n}\colon M_\mu\to M_{\mu+n\alpha}$ and $F_{\mu,\alpha,n}\colon M_{\mu+n\alpha}\to M_\mu$ for each simple root $\alpha$ and each positive integer $n$. Then we define the following for each $\mu\in X$:

\begin{tabular}{ll}
$M_{\delta\mu}$& $:=\bigoplus_{\alpha\in\Pi,n>0}M_{\mu+n\alpha}$ \\
$E_\mu\colon M_{\mu}\to M_{\delta\mu}$ & the column vector with entries $E_{\mu,\alpha,n}$ \\
$F_\mu\colon M_{\delta\mu}\to M_\mu$ & the row vector with entries $F_{\mu,\alpha,n}$ \\
$M_{\lgl\mu\rgl}\subset M_{\delta\mu}$& the image of $E_\mu\colon M_\mu\to M_{\delta\mu}$\\
$M_{\{\mu\}}\subset M_{\lgl\mu\rgl}$ & the image of $E_\mu\circ F_\mu\colon M_{\delta\mu}\to M_{\delta\mu}$ \\
$M_{\{\mu\},\max}\subset M_{\delta\mu}$ & the preimage of the torsion part of $M_{\delta\mu}/M_{\{\mu\}}$.
\end{tabular}


\end{document}